\newtheorem{theorem}{Theorem}[section]
\newtheorem{corollary}[theorem]{Corollary}
\newtheorem{lemma}[theorem]{Lemma}
\newtheorem{problem}[theorem]{Problem}
\newtheorem{proposition}[theorem]{Proposition}
\theoremstyle{definition}
\newtheorem{remark}[theorem]{Remark}
\def\A{{\mathcal{A}}}
\def\B{{\mathcal{B}}}
\def\E{{\mathcal{E}}}
\def\F{{\mathcal{F}}}
\def\H{{\mathcal{H}}}
\def\P{{\mathcal{P}}}
\def\U{{\mathcal{U}}}
\begin{document}

\title[Tingley's problem for operator algebras]{Tingley's problem through\\
the facial structure of operator algebras}

\author[M. Mori]{Michiya Mori}

\address{Graduate School of Mathematical Sciences, the University of Tokyo, Komaba, Tokyo, 153-8914, Japan.}
\email{mmori@ms.u-tokyo.ac.jp}

\subjclass[2010]{Primary 46B04, Secondary 46B20, 47B49.} 

\keywords{Tingley's problem; isometric extension; operator algebra}

\date{}

\begin{abstract}
Tingley's problem asks whether every surjective isometry between the unit spheres of two Banach spaces admits an extension to a real linear surjective isometry between the whole spaces.
In this paper, we give an affirmative answer to Tingley's problem when both spaces are preduals of von Neumann algebras, the spaces of self-adjoint operators in von Neumann algebras or the spaces of self-adjoint normal functionals on von Neumann algebras.  
We also show that every surjective isometry between the unit spheres of unital C$^*$-algebras restricts to a bijection between their unitary groups. 
In addition, we show that every surjective isometry between the normal state spaces or the normal quasi-state spaces of two von Neumann algebras extends to a linear surjective isometry.
\end{abstract}
\maketitle
\thispagestyle{empty}

\section{Introduction}
In 1951, the study of isometries between operator algebras began in \cite{Ka51}. 
In that paper, Kadison proved that every complex linear surjective isometry between two unital C$^*$-algebras can be decomposed as the composition of a Jordan $^*$-isomorphism and the multiplication by a unitary. 
Since then, linear isometries between operator algebras have been considered in various settings by many researchers. 
For example, see \cite{FMP} and \cite{S}, which contain results and references concerning generalizations of Kadison's theorem to thoroughly different directions.\smallskip

On the other hand, the Mazur-Ulam theorem states that every surjective isometry between two real normed spaces is affine. 
This result attracted many mathematicians, and isometries without assuming affinity were considered in many cases.
The symbol $S(X)$ denotes the unit sphere (i.e.\ the subset of the elements with norm $1$) of a Banach space $X$, while the notation $\B_X$ means the closed unit ball of $X$. 
What we focus on in this paper is the following problem, which is closely related to the Mazur-Ulam theorem. 

\begin{problem}\label{Tingley}
Let $X$ and $Y$ be two Banach spaces and $T: S(X)\to S(Y)$ be a surjective isometry. 
Does $T$ admit an extension to a real linear surjective isometry $\widetilde{T}: X\to Y$?
\end{problem}

The first contribution to this problem dates back to 1987, and it is due to Tingley \cite{Ti}, so this problem is nowadays called \emph{Tingley's problem} (or the \emph{surjective isometric extension problem}).
More than 30 years have passed since the birth of this problem, but the answer in general situations is yet far from having been achieved. 
Indeed, it is said that Tingley's problem is unsolved even in the case $X = Y$ and $X$ is two dimensional. 
However, until now, no counterexamples for Tingley's problem have been found. 
Moreover, in many cases (including the cases of most of classical real Banach spaces and some special Banach spaces), affirmative answers have been given for Tingley's problem. 
The survey \cite{Di} contains good expositions and references on Tingley's problem.\smallskip

Tingley's problem in the setting of operator algebras was first considered by Tanaka \cite{Ta16}, and he later solved Tingley's problem affirmatively when $X$ and $Y$ are finite von Neumann algebras \cite{Ta17}. 
Recently, Fern\'andez-Polo and Peralta generalized this result to the cases of general von Neumann algebras \cite{FP}. 
On the other side, Fern\'andez-Polo, Garc\'es, Peralta and Villanueva solved Tingley's problem positively when $X$ and $Y$ are the spaces of trace class operators on complex Hilbert spaces \cite{FGPV}. 
See Introduction of \cite{FP} for the latest developments in this field.
It is common to use the following strategy to solve Tingley's problem for operator algebras. 
First we detect some substructures of the unit spheres such as unitary groups and minimal or maximal partial isometries. 
In this step, the facial structure of unit balls plays a crucial role. 
Second we construct the only one candidate for the real linear extension which is determined by such substructures. 
And finally we show that this linear mapping is the extension we wanted.\smallskip

In this paper, applying some versions of this strategy, we give several new results concerning Tingley's problem in the setting of operator algebras.

In Section \ref{face}, we summarize some known results about the facial structure of operator algebras and (pre)duals (due to Akemann and Pedersen \cite{AP}) and its application to Tingley's problem, which are used in the later sections.

In Section \ref{C*}, we show that every surjective isometry between the unit spheres of two unital C$^*$-algebras restricts to a bijection between their unitary groups. In the proof, we detect the unitary group from extreme points in the unit ball. Using the surjective isometry between unitary groups and the result due to Hatori and Moln\'ar \cite{HM}, we construct the only one candidate for the real linear isometric extension. Although the author does not know whether this linear mapping actually extends the original mapping, we show that Tingley's problem for unital C$^*$-algebras is equivalent to Problem \ref{problem-C*}.

In Section \ref{predual}, we give a positive answer to Tingley's problem when $X$ and $Y$ are preduals, $M_*$ and $N_*$ of von Neumann algebras $M$ and $N$, respectively. In the proof, we use the structure of maximal faces, and calculate Hausdorff distances between them to construct a surjective isometry between the unitary groups of $M$ and $N$. By the theorem of Hatori and Moln\'ar, this mapping extends to a real linear surjective isometry from $M$ onto $N$. This linear mapping canonically determines a real linear surjective isometry from $N_*$ onto $M_*$, whose inverse mapping is shown to be the extension we wanted.

In Section \ref{sa}, we show that Tingley's problem has an affirmative answer when $X$ and $Y$ are the spaces $M_{sa}$ and $N_{sa}$ of self-adjoint operators in von Neumann algebras $M$ and $N$, respectively. In this case, some techniques used in sections before cannot be applied. Instead, we use the structure of projection lattices and orthogonality combined with a theorem of Dye \cite{Dy}.
We also solve Tingley's problem positively when $X$ and $Y$ are the spaces $M_{*sa}$ and $N_{*sa}$ of self-adjoint elements in preduals of von Neumann algebras $M$ and $N$, respectively. 
Additionally, applying some discussions in this paper, we show that every surjective isometry $T: S(M_{*})\cap M_{*+} \to S(N_{*})\cap N_{*+}$ (resp. $T: \B_{M_{*}}\cap M_{*+} \to \B_{N_{*}}\cap N_{*+}$) between the normal state spaces (resp. between the normal quasi-state spaces) of two von Neumann algebras $M$ and $N$ admits a linear surjective isometric extension from $M_{sa}$ onto $N_{sa}$.

In Section \ref{problem}, along the line of this paper, we list problems which seem to be open and new, with some comments.

\section{Facial structure of operator algebras and its use in Tingley's problem}\label{face}
Recall that a nonempty convex subset $\F$ of a convex set $\mathcal{C}$ in a Banach space is called a \emph{face} in $\mathcal{C}$ if $\F$ has the following property: if $x, y \in\mathcal{C}$ and $\lambda x + (1-\lambda) y \in \F$ for some $0 < \lambda < 1$, then $x, y \in \F$.
It can be easily proved by Hahn-Banach theorem that for a Banach space $X$, a subset $\F$ of $\B_X$ is a maximal norm-closed proper face in $\B_X$ if and only if $\F$ is a maximal convex subset of $S(X)$ (see \cite[Lemma 3.2]{Ta17}).
In order to attack Tingley's problem, nowadays the following geometric property is known: 
every surjective isometry between the unit spheres of two Banach spaces preserves maximal convex sets of the spheres (\cite[Lemma 5.1$(ii)$]{CD}, \cite[Lemma 3.5]{Ta14}). \smallskip

On the other hand, the facial structure of the unit ball of operator algebras and (pre)duals were thoroughly studied by Akemann and Pedersen \cite{AP}.
Let $X$ be a real or complex Banach space and $F \subset X$, $G \subset X^*$ be subsets. We define 
\[
\begin{split} 
F^{\prime} &:= \{f \in \B_{X^*} \mid f(x) = 1\,\, \text{for any}\,\, x \in F\},\\
G_{\prime} &:= \{x \in \B_X \mid f(x) = 1\,\, \text{for any}\,\, f \in G\}.
\end{split}
\]

\begin{theorem}[Akemann and Pedersen {\cite[Theorem 5.3]{AP}}]\label{thm-AP1}
Let $X$ be one of the following Banach spaces: 
a C$^*$-algebra, the space of self-adjoint operators in a C$^*$-algebra, the predual of a von Neumann algebra, or the space of self-adjoint elements in the predual of a von Neumann algebra. 
(Consider $X$ as a complex Banach space in the first or the third case, and real in the other cases.) 
Then the mapping $\F \mapsto \F^{\prime}$ is an order-reversing bijection from the class of norm-closed faces in $\B_X$ onto the class of weak$^*$-closed faces in $\B_{X^{*}}$. 
The inverse mapping is given by $\mathcal{G} \mapsto \mathcal{G}_{\prime}$.
\end{theorem}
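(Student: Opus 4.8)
The plan is to split the statement into a formal Banach-space part and a genuinely operator-algebraic part, the latter being the Akemann--Pedersen parametrisation of closed faces by partial isometries. For the formal part, recall that for any Banach space $X$ and subsets $F\subset X$, $G\subset X^{*}$, the set $F^{\prime}$ is convex and weak$^{*}$-closed, $G_{\prime}$ is convex and norm-closed, both assignments reverse inclusions, one has $F\subset(F^{\prime})_{\prime}$ and $G\subset(G_{\prime})^{\prime}$, and the triple compositions collapse to the single ones; moreover a one-line computation using $|f(x)|\le1$ shows that if $F\neq\emptyset$ and $F^{\prime}\neq\emptyset$ then $F^{\prime}$ is itself a face of $\B_{X^{*}}$, and symmetrically for $G_{\prime}$. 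Consequently $\F\mapsto\F^{\prime}$ is automatically a bijection, with inverse $\mathcal{G}\mapsto\mathcal{G}_{\prime}$, from the class of norm-closed faces $\F$ of $\B_{X}$ with $(\F^{\prime})_{\prime}=\F$ onto the class of weak$^{*}$-closed faces $\mathcal{G}$ of $\B_{X^{*}}$ with $(\mathcal{G}_{\prime})^{\prime}=\mathcal{G}$ (the improper pair $\B_{X}\leftrightarrow\emptyset$ being treated separately). So the whole theorem reduces to two ``exposedness'' assertions: every norm-closed face $\F$ of $\B_{X}$ satisfies $(\F^{\prime})_{\prime}=\F$, and every weak$^{*}$-closed face $\mathcal{G}$ of $\B_{X^{*}}$ satisfies $(\mathcal{G}_{\prime})^{\prime}=\mathcal{G}$. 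This is where structure becomes indispensable: both can fail for general Banach spaces --- already in $\mathbb{R}^{2}$ normed so that its unit ball is the convex hull of two unit disks with centres at distance $2$, the non-exposed extreme point determines a one-point norm-closed face $\F$ for which $(\F^{\prime})_{\prime}$ is a whole edge.

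For the operator-algebraic part I would quote, or reprove along the lines of \cite{AP}, the description of the proper closed faces by partial isometries. For a C$^{*}$-algebra $A$, every proper norm-closed face of $\B_{A}$ has the form $\F_{v}=\{a\in\B_{A}:vv^{*}a=v=av^{*}v\}$ for a partial isometry $v\in A^{**}$ --- necessarily one for which this set is nonempty and norm-closed, which \cite{AP} characterises in terms of closedness of the projections $v^{*}v$, $vv^{*}$; every proper weak$^{*}$-closed face of $\B_{A^{*}}$ has the form $\mathcal{G}_{v}=\{f\in\B_{A^{*}}:f(v)=1\}$ for such a $v$; and one verifies, using the Peirce calculus together with the structure of closed projections, that $(\F_{v})^{\prime}=\mathcal{G}_{v}$ and $(\mathcal{G}_{v})_{\prime}=\F_{v}$. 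This yields the two exposedness assertions for $X=A$, and for $X=M_{*}$ the predual of a von Neumann algebra $M$ it is the same statement read off the duality between $M_{*}$ and $M=(M_{*})^{*}$, with $v$ now ranging over all partial isometries of $M$. The one genuinely nonformal ingredient of this parametrisation is that a given proper norm-closed face $\F$ admits \emph{some} witnessing partial isometry, equivalently $\F^{\prime}\neq\emptyset$: one enlarges $\F$ by Zorn's lemma to a maximal convex subset of $S(X)$ and shows, as part of the same analysis, that such a maximal set has the form $\{x:g(x)=1\}$ for a single extreme point $g$ of $\B_{X^{*}}$, using Krein--Milman in the weak$^{*}$-compact ball of the bidual together with Kadison's transitivity theorem (in the C$^{*}$ case) or the fact that a partial isometry of a von Neumann algebra is peaked on by a normal functional (in the predual case); the reverse passage, from a closed projection back to a nonempty norm-closed face, uses Akemann's noncommutative Urysohn lemma.

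The two self-adjoint cases reduce to the two preceding ones via the involution $x\mapsto x^{*}$, which on each of $A$, $A^{*}$, $M$, $M_{*}$ is a conjugate-linear isometry, is weak$^{*}$-continuous where relevant, carries faces to faces, and intertwines $(\cdot)^{\prime}$ with $(\cdot)_{\prime}$. Indeed a $*$-invariant norm-closed face of $\B_{A}$ meets $A_{sa}$ in a norm-closed face of $\B_{A_{sa}}$; conversely a norm-closed face of $\B_{A_{sa}}$ generates an automatically $*$-invariant norm-closed face of $\B_{A}$ from which it is recovered on intersecting with $A_{sa}$; and since $(\cdot)^{\prime}$ commutes with passing to the self-adjoint part, the bijection for $A$ restricts to that for $A_{sa}$, and likewise for $M_{*}$ and $M_{*sa}$.

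The main obstacle in all of this is the exposedness identity $(\F^{\prime})_{\prime}=\F$ and its dual --- concretely, identifying the proper closed faces on both sides with the correct class of partial isometries and verifying that norm-closedness of a face corresponds \emph{exactly} to closedness of its supporting projections. Everything else is abstract Banach-space duality or $*$-symmetry bookkeeping.
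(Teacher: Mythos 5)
This statement is quoted verbatim from Akemann--Pedersen \cite[Theorem 5.3]{AP}; the paper gives no proof of it, so there is no internal argument to compare yours against. Judged on its own terms, your sketch gets the architecture right: the polarity formalities ($F\subset(\F^{\prime})_{\prime}$, collapse of triple compositions, $\F^{\prime}$ being a weak$^*$-closed face whenever nonempty) are correct and cheap, the stadium example correctly shows that the two exposedness identities $(\F^{\prime})_{\prime}=\F$ and $(\mathcal{G}_{\prime})^{\prime}=\mathcal{G}$ are where all the content sits, and the tools you name (Kadison transitivity, the noncommutative Urysohn lemma, compact/closed projections, partial isometries belonging locally to $A$) are indeed the ones Akemann and Pedersen use.

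The gap is that the entire substantive content is deferred rather than supplied: the assertion that \emph{every} proper norm-closed face of $\B_A$ equals some $\F_v$, and every proper weak$^*$-closed face of $\B_{A^*}$ equals some $\mathcal{G}_v$, with norm-closedness corresponding exactly to the right compactness/closedness conditions on $v^*v$ and $vv^*$ --- this \emph{is} Theorem 5.3 of \cite{AP}, so ``quote, or reprove along the lines of \cite{AP}'' makes the argument circular as a proof and reduces it to an annotated citation (which is, to be fair, all the paper itself does). Two further points deserve care if you want to make this self-contained. First, your reduction of the self-adjoint cases is only half formal: that a $*$-invariant face of $\B_A$ meets $A_{sa}$ in a face of $\B_{A_{sa}}$ is easy, but the converse --- that every norm-closed face of $\B_{A_{sa}}$ is the trace of a ($*$-invariant) face of $\B_A$, witnessed by a \emph{self-adjoint} partial isometry $p-q$ with $p,q$ compact and orthogonal --- is part of what must be proved, not bookkeeping; Akemann--Pedersen treat the self-adjoint case with its own analysis. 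Second, the bijection in the theorem is between \emph{all} norm-closed faces and \emph{all} weak$^*$-closed faces (in the convention of \cite{AP} this includes the improper ones), so the ``treated separately'' aside needs to be reconciled with the face convention actually in force.
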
 
Using this theorem as in the proof of Corollary 3.4 in \cite{Ta17} 
(or by Corollary 2.5 of \cite{FGPV}, which can also be applied in the situations of real Banach spaces), 
we obtain the following proposition. For the convenience of the readers, we add a proof.

\begin{proposition}[A version of {\cite[Corollary 3.4]{Ta17}} or {\cite[Corollary 2.5]{FGPV}}]\label{prop-T}
Let $A$ and $B$ be C$^*$-algebras, $M$ and $N$ be von Neumann algebras and the pair $(X, Y)$ be one of the following pairs: 
$(A, B)$, $(A_{sa}, B_{sa})$, $(M_*, N_*)$ or $(M_{*sa}, N_{*sa})$. 
Suppose $T: S(X) \to S(Y)$ is a surjective isometry. 
Then for a subset $\F \subset S(X)$, $\F$ is a norm-closed proper face in $\B_X$ if and only if $T(\F)$ is in $\B_Y$. 
In particular, $x\in S(X)$ is an extreme point in $\B_X$ if and only if $T(x)$ is in $\B_Y$. 
\end{proposition}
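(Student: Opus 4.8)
The plan is to reduce the statement to the already-recorded geometric fact that a surjective isometry between two unit spheres carries maximal convex subsets of one sphere bijectively onto those of the other. The bridge is the observation that, for each of the four classes of spaces in the statement, a subset $\F\subseteq\B_X$ is a norm-closed proper face of $\B_X$ if and only if it is a \emph{nonempty intersection of maximal norm-closed proper faces} of $\B_X$. The ``if'' part is routine: an intersection of faces is a face, an intersection of norm-closed sets is norm-closed, and an intersection over a nonempty family of proper faces is again proper. For the ``only if'' part, given a norm-closed proper face $\F$, Theorem~\ref{thm-AP1} shows that $\F^{\prime}$ is a nonempty weak$^*$-closed face of $\B_{X^*}$; by the Krein--Milman theorem (applicable since $\B_{X^*}$ is weak$^*$-compact) $\F^{\prime}$ is the weak$^*$-closed convex hull of the nonempty set $\mathcal E$ of its extreme points, and every point of $\mathcal E$ is an extreme point of $\B_{X^*}$. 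Using that $f\mapsto f(x)$ is weak$^*$-continuous and affine, together with the identity $\F=(\F^{\prime})_{\prime}$ from Theorem~\ref{thm-AP1}, one verifies $\F=\bigcap_{e\in\mathcal E}\{e\}_{\prime}$; and since each $\{e\}$ with $e\in\mathcal E$ is a minimal nonempty weak$^*$-closed face, the order-reversing bijection of Theorem~\ref{thm-AP1} makes each $\{e\}_{\prime}$ a maximal norm-closed proper face of $\B_X$. This is essentially the argument behind \cite[Corollary 3.4]{Ta17} and \cite[Corollary 2.5]{FGPV}.

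With this reduction available, I would proceed as follows. By \cite[Lemma 3.2]{Ta17} the maximal norm-closed proper faces of $\B_X$ are exactly the maximal convex subsets of $S(X)$, and by \cite[Lemma 5.1$(ii)$]{CD} (see also \cite[Lemma 3.5]{Ta14}) any surjective isometry between two unit spheres sends maximal convex subsets of one sphere onto maximal convex subsets of the other; applying the latter to both $T$ and $T^{-1}$, the map $T$ restricts to a bijection from the maximal convex subsets of $S(X)$ onto those of $S(Y)$. Now let $\F\subseteq S(X)$ be a norm-closed proper face of $\B_X$ and write $\F=\bigcap_{\alpha}\mathcal C_{\alpha}$ with each $\mathcal C_{\alpha}$ a maximal convex subset of $S(X)$ and the index set nonempty, using the reduction above. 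Since $T$ is a bijection, $T(\F)=\bigcap_{\alpha}T(\mathcal C_{\alpha})$; this set is nonempty (because $\F\neq\emptyset$) and is an intersection over a nonempty family of maximal convex subsets of $S(Y)$, so the reduction applied to $Y$ shows that $T(\F)$ is a norm-closed proper face of $\B_Y$. Repeating the argument with $T^{-1}$ in place of $T$ gives the reverse implication, which proves the equivalence. For the final assertion, observe that $x\in S(X)$ is an extreme point of $\B_X$ precisely when $\{x\}$ is a face of $\B_X$, and $\{x\}$ is automatically norm-closed and proper; hence $x$ is an extreme point of $\B_X$ if and only if $T(\{x\})=\{T(x)\}$ is a norm-closed proper face of $\B_Y$, i.e.\ if and only if $T(x)$ is an extreme point of $\B_Y$.

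The only genuinely substantive point is the reduction in the first paragraph: the fact that every norm-closed proper face of $\B_X$ decomposes as an intersection of maximal ones is special to these four classes of spaces (it rests on the Akemann--Pedersen description and fails for general Banach spaces), and I expect that checking the identity $\F=\bigcap_{e\in\mathcal E}\{e\}_{\prime}$ via Krein--Milman is where the most care is needed. Everything after that is formal bookkeeping with the bijectivity of $T$ and the known preservation of maximal convex subsets of the unit sphere.
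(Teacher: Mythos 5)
Your proposal is correct and follows essentially the same route as the paper: decompose a norm-closed proper face as $\F=(\F^{\prime})_{\prime}=\bigcap_{f\in\operatorname{ext}(\F^{\prime})}\{f\}_{\prime}$ via Theorem~\ref{thm-AP1} and Krein--Milman, note that each $\{f\}_{\prime}$ is a maximal norm-closed proper face, and use the preservation of maximal convex subsets of the sphere together with bijectivity of $T$ to pass the intersection through $T$. The paper's proof is exactly this argument, with the converse direction handled implicitly by symmetry via $T^{-1}$ as you do.
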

\begin{proof}
Let $\F$ be a norm-closed proper face in $\B_X$. By the preceding theorem and the Krein-Milman theorem, we have 
\[
\F = (\F^{\prime})_{\prime} = \bigcap_{f \in \F^{\prime}} \{ f \}_{\prime} = 
\bigcap_{f \in \operatorname{ext} (\F^{\prime})} \{ f \}_{\prime}.
\]
Since $\F^{\prime}$ is a face, it follows that $\operatorname{ext} (\F^{\prime}) \subset \operatorname{ext} (\B_{X^*})$. 
Again by the preceding theorem, for every $f \in \operatorname{ext} (\B_{X^*})$, $\{ f \}_{\prime}$ is a maximal norm-closed proper face in $S(X)$. 
By the fact that $T$ gives a bijection between the classes of maximal norm-closed proper faces in unit balls, it follows that the set 
\[
T(\F) = T\biggl( \bigcap_{f \in \operatorname{ext} (\F^{\prime})} \{ f \}_{\prime} \biggr) = \bigcap_{f \in \operatorname{ext} (\F^{\prime})} T\left(\{ f \}_{\prime}\right)
\]
is a norm-closed proper face in $\B_{Y}$.
\end{proof}

We add a little more to these results.
\begin{proposition}[See also {\cite[Section 4]{Ti}}]\label{prop-face}
Let $X$ and $Y$ be Banach spaces and suppose that $T: S(X) \to S(Y)$ is a surjective isometry.
\begin{enumerate}[$(a)$]
\item Let $\F \subset S(X)$ be a maximal convex set. Then $T(-\F) = -T(\F)$.
\item Suppose $(X, Y)$ is a pair as in the preceding proposition and let $\F \subset \B_X$ be a norm-closed proper face. Then $T(-\F) = -T(\F)$. 
\end{enumerate}
\end{proposition}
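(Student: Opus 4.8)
The plan is to establish first a purely metric description of the antipode $-\F$ of a \emph{maximal} convex set, valid in an arbitrary Banach space, and then to bootstrap from maximal faces to all faces using the Akemann--Pedersen correspondence. For $(a)$ the core assertion is that, for a maximal convex subset $\mathcal{G}$ of the unit sphere $S(Z)$ of any Banach space $Z$,
\[
-\mathcal{G} = \{\, z \in S(Z) : \|z - u\| = 2 \ \text{for all } u \in \mathcal{G} \,\}.
\]
Granting this, $(a)$ follows at once: since $T$ is a surjective isometry it maps the right-hand side computed in $X$ for $\mathcal{G} = \F$ (which is $-\F$) onto $\{\, y \in S(Y) : \|y - v\| = 2 \text{ for all } v \in T(\F) \,\}$; and because $T$ carries maximal convex subsets of $S(X)$ onto maximal convex subsets of $S(Y)$, the set $T(\F)$ is again maximal convex, so this last set equals $-T(\F)$ by the identity applied in $Y$. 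The inclusion $\subseteq$ in the displayed identity is elementary: if $z \in -\mathcal{G}$ then $-z \in \mathcal{G}$, so for every $u \in \mathcal{G}$ the midpoint $\tfrac{1}{2}(u - z)$ lies in $\mathcal{G} \subset S(Z)$, whence $\|u - z\| = 2$.

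The reverse inclusion is the heart of the matter. Fix $z \in S(Z)$ with $\|z - u\| = 2$ for all $u \in \mathcal{G}$. Given finitely many $u_1,\dots,u_n \in \mathcal{G}$, their average $\bar u$ still lies in $\mathcal{G}$, so a Hahn--Banach norming functional $h$ for $z - \bar u$ satisfies $h(z - \bar u) = \|z - \bar u\| = 2$, which forces $h(z) = 1$ and $h(\bar u) = -1$; then $h(u_i) = -1$ for every $i$, since the values $h(u_i) \ge -1$ average to $-1$. Hence the weak$^*$-closed subsets $\{\, h \in \B_{Z^*} : h(u) = -1 \,\}$ of $\B_{Z^*}$, for $u \in \mathcal{G}$, together with $\{\, h \in \B_{Z^*} : h(z) = 1 \,\}$, have the finite intersection property, so by weak$^*$-compactness of $\B_{Z^*}$ there is a single $h \in \B_{Z^*}$ with $h \equiv -1$ on $\mathcal{G}$ and $h(z) = 1$. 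Now $\{\, w \in \B_Z : h(w) = -1 \,\}$ is a convex subset of $S(Z)$ containing $\mathcal{G}$, hence equals $\mathcal{G}$ by maximality, and since $-z$ belongs to it we get $-z \in \mathcal{G}$, i.e. $z \in -\mathcal{G}$. I expect this extraction of one separating functional that is $-1$ on the whole of $\mathcal{G}$ from the merely pointwise data to be the only genuine difficulty; it is precisely where maximality of $\mathcal{G}$ (hence the hypothesis in $(a)$ that $\F$ is a \emph{maximal} convex set) enters, and indeed the displayed identity fails for faces that are not maximal.

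For $(b)$ I would deduce the statement from $(a)$ by decomposing $\F$ into maximal faces exactly as in the proof of Proposition~\ref{prop-T}. Writing $\F = \bigcap_{f \in \operatorname{ext}(\F^{\prime})} \{f\}_{\prime}$, where each $f$ is an extreme point of $\B_{X^{*}}$ and hence each $\{f\}_{\prime}$ is a maximal convex subset of $S(X)$, one has $-\F = \bigcap_{f \in \operatorname{ext}(\F^{\prime})} \bigl(-\{f\}_{\prime}\bigr)$ because the bijection $x \mapsto -x$ of $S(X)$ commutes with intersections. Since $T$ is likewise a bijection of $S(X)$ onto $S(Y)$, it too commutes with intersections of subsets of $S(X)$, so
\[
T(-\F) = \bigcap_{f \in \operatorname{ext}(\F^{\prime})} T\bigl(-\{f\}_{\prime}\bigr) = \bigcap_{f \in \operatorname{ext}(\F^{\prime})} \bigl(-T(\{f\}_{\prime})\bigr) = -\bigcap_{f \in \operatorname{ext}(\F^{\prime})} T(\{f\}_{\prime}) = -T(\F),
\]
where the second equality is part $(a)$ applied to the maximal convex set $\{f\}_{\prime}$, and the last uses the decomposition $T(\F) = \bigcap_{f} T(\{f\}_{\prime})$ already obtained in the proof of Proposition~\ref{prop-T}. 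The only thing to watch here is that every set appearing is a subset of the relevant unit sphere, which is what makes these set-theoretic manipulations with $T$ and with $-(\cdot)$ legitimate; beyond this, $(b)$ needs nothing more than $(a)$ and the material already assembled for Proposition~\ref{prop-T}, in particular Theorem~\ref{thm-AP1}, which is available precisely for the pairs $(X,Y)$ in the statement.
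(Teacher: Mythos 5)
Your argument is correct, and its overall architecture coincides with the paper's: both prove $(a)$ by establishing the metric characterization $-\F = \{\, x \in S(X) : \|x-y\|=2 \text{ for all } y \in \F \,\}$ and then deduce $(b)$ by writing a norm-closed face as an intersection of maximal convex sets exactly as in the proof of Proposition~\ref{prop-T}. The one place where you genuinely diverge is the hard inclusion in the metric characterization. The paper separates the two disjoint open convex sets $\F + \operatorname{int}\B_X$ and $x + \operatorname{int}\B_X$ by Hahn--Banach and then pins down the separating functional's values; you instead take norming functionals for $z - \bar u$ over finite averages $\bar u$ of points of $\F$, observe that each such functional must equal $-1$ at every $u_i$ involved, and extract a single functional that is identically $-1$ on $\F$ and $1$ at $z$ via weak$^*$-compactness of $\B_{X^*}$ and the finite intersection property. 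Both routes land on the same conclusion --- a hyperplane slice $\{w \in \B_X : h(w) = -1\}$ of the sphere containing $\F$, which maximality forces to equal $\F$ --- and both are complete; yours trades the geometric separation argument for a compactness argument, which some may find cleaner since it avoids checking that the two neighborhoods are disjoint and chasing the constant $c$ in the separation. Your write-up of $(b)$ simply spells out the one-line deduction the paper gives, and it is correct as stated.
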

\begin{proof}
$(a)$ It suffices to show that $-\F = \{ x \in S(X) \mid \| x - y \| = 2 \,\,\text{for any}\,\, y \in \F \}$. 
Let $y_1, y_2 \in \F$. Then $(y_1 + y_2)/2 \in \F$. 
In particular, $\|-y_1-y_2\| = \|y_1+y_2\| = 2$. 
Thus we obtain $-\F \subset \{ x \in S(X) \mid \| x - y \| = 2 \,\,\text{for any}\,\, y \in \F \}$. 
Let $x \in S(X)$ and suppose $\| x - y \| = 2$ for all $y \in \F$. 
Then the open convex sets $\mathcal{S}_1 := \{z_1 \in X \mid \operatorname{dist} (z_1, \F) < 1\} = \F + \operatorname{int}\B_X$ and 
$\mathcal{S}_2 := \{z_2 \in X \mid \|z_2-x\| < 1\} = x + \operatorname{int}\B_X$ do not have a common element.
By the Hahn-Banach theorem, we obtain a functional $f \in S(X^*)$ and a real number $c\in \mathbb{R}$ such that 
$\operatorname{Re}f(z_1) > c$ for every $z_1 \in \mathcal{S}_1$ and $\operatorname{Re}f(z_2) < c$ for every $z_2 \in \mathcal{S}_2$. 
Since $\operatorname{Re}f(x),\, \operatorname{Re}f(y)\in [-1, 1]$ for every $y\in \F$ and 
$\operatorname{Re}f(\operatorname{int}\B_X) = (-1, 1)$, 
we have $c = 0$, $\operatorname{Re}f(x) = -1$, $\operatorname{Re}f(y) = 1$ and thus 
 $f(x) = -1$ and $f(y) = 1$. 
It follows that $f^{-1}(1) \cap\B_X \subset \B_X$ is a norm-closed face which contains $\F$. 
By the maximality of $\F$, we have $f^{-1}(1)\cap\B_X = \F$.
Thus $x \in f^{-1}(-1)\cap\B_X = (-f^{-1}(1))\cap\B_X = -\F$.\smallskip

\noindent
$(b)$ follows by $(a)$ and the fact that every norm-closed face is the intersection of some maximal convex sets in $S(X)$ (see the proof of the preceding proposition).
\end{proof}

In fact, Akemann and Pedersen concretely described the facial structure of operator algebras and (pre)duals in order to prove Theorem \ref{thm-AP1}.

Let $A$ be a (not necessarily unital) C$^*$-algebra. 
The partial order in the set of partial isometries in $A$ is given by the following: $u$ majorizes (or extends) $v$ if $u = v + (1-vv^*) u (1-v^*v)$.
A projection $p$ in the bidual $A^{**}$ (considered as the enveloping von Neumann algebra) is said to be \emph{open} 
if there exists an increasing net of positive elements in $A$ converging to $p$ in the $\sigma$-strong topology of $A^{**}$. 
A projection $p \in A^{**}$ is said to be \emph{closed} if $1-p$ is open. 
A closed projection $p$ in $A^{**}$ is \emph{compact} if $p\leq a$ for some norm-one positive element $a \in A$. 
A partial isometry $v\in A^{**}$ \emph{belongs locally to $A$} if 
$v^*v$ is a compact projection and there exists a norm-one element $x$ in $A$ such that $xv^*=vv^*$. 
See \cite{AP} for more information.\smallskip

\begin{theorem}[Akemann and Pedersen \cite{AP}]\label{thm-AP2}
Let $A$ be a C$^*$-algebra and $M$ be a von Neumann algebra.
\begin{enumerate}[$(a)$]
\item For each norm-closed face $\F$ of $\B_A$, there exists a unique partial isometry $v$ belonging locally to $A$ such that 
$\F= \{x\in \B_A \mid xv^* = vv^*\}$. 
\item For each norm-closed face $\F$ of $\B_{A_{sa}}$, there exists a unique pair of compact projections $p, q$ such that $pq = 0$ and $\F = \{x\in \B_{A_{sa}} \mid x(p-q) = p+q\}$.
\item For each weak$^*$-closed proper face $\mathcal{G}$ of $\B_{A^{*}}$, there exists a unique nonzero partial isometry $v$ belonging locally to $A$ such that $\mathcal{G} = \{v\}_{\prime}$.
\item For each weak$^*$-closed proper face $\mathcal{G}$ of $\B_{A^*_{sa}}$, there exists a unique pair of compact projections $p, q$ such that $p+q\neq 0$, $pq = 0$ and $\mathcal{G} = \{p-q\}_{\prime}$.
\item For each $\sigma$-weakly closed face $\mathcal{G}$ of $\B_M$ (resp. $\B_{M_{sa}}$), there exists a unique partial isometry (resp. self-adjoint partial isometry) $v$ in $M$ such that 
\[
\begin{split}
\mathcal{G} &= \{x\in \B_M \mid xv^* = vv^*\} = v + (1 - vv^*)\B_M(1 - v^*v) \\
(\text{resp.}\quad \mathcal{G} &= \{x\in \B_{M_{sa}} \mid xv = v^2\} = v + (1 - v^2) \B_{M_{sa}} (1 - v^2) ).
\end{split}
\]
\item For each norm-closed proper face $\F$ of $\B_{M_{*}}$ (resp. $\B_{M_{*sa}}$), there exists a unique nonzero partial isometry (resp. self-adjoint partial isometry) $v$ in $M$ such that $\F = \{v\}_{\prime}$.
\end{enumerate}
\end{theorem}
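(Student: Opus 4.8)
The plan is to prove the von Neumann algebra statements $(e)$ directly, and to deduce everything else from them by passing to the enveloping von Neumann algebra $A^{**}$ and by the duality of Theorem \ref{thm-AP1}. To a partial isometry $v$ in a von Neumann algebra $M$ we attach the set $\mathcal{F}_v := \{x \in \B_M : xv^* = vv^*\}$ (and $\{x \in \B_{M_{sa}} : xv = v^2\}$ when $v = v^*$); the content of $(e)$ is that these sets are exactly the $\sigma$-weakly closed faces.

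First I would dispatch the easy half of $(e)$. Fix $v$ and put $p := vv^*$, $q := v^*v$. Since $v^* = qv^*$, the condition $xv^* = vv^*$ is equivalent to $xq = v$, and $(1-q)v^* = 0$ yields $v + (1-p)\B_M(1-q) \subseteq \mathcal{F}_v$; the one nontrivial point, that $xq = v$ forces $x(1-q)$ to have range under $1-p$, follows from the elementary fact that a Hilbert space contraction $T$ with $\|T\xi\| = \|\xi\|$ satisfies $\langle T\eta, T\xi\rangle = \langle\eta,\xi\rangle$ for all $\eta$ (applied with $T = x$ and $\xi$ running over the unit vectors of $q\H$). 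This identifies $\mathcal{F}_v$ with $v + (1-p)\B_M(1-q)$, a $\sigma$-weakly closed convex set; it is a face because $\lambda x_1 + (1-\lambda)x_2 \in \mathcal{F}_v$ with $x_i \in \B_M$ and $0 < \lambda < 1$ forces $\lambda x_1 q + (1-\lambda)x_2 q = v$, which acts isometrically on $q\H$, so by strict convexity of $\H$ we get $x_1 q = x_2 q = v$ and $x_i \in \mathcal{F}_v$. Uniqueness of $v$ is immediate: $\mathcal{F}_v = \mathcal{F}_w$ gives $vw^* = ww^*$ and $wv^* = vv^*$, hence $vv^* = ww^*$ and $v = vv^*w = ww^*w = w$.

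The substantial half of $(e)$ is that every $\sigma$-weakly closed face $\mathcal{G}$ of $\B_M$ has the form $\mathcal{F}_v$. Since $\B_M$ is $\sigma$-weakly compact, Krein-Milman gives $\mathcal{G} = \overline{\mathrm{co}}^{\,w^*}(\operatorname{ext}\mathcal{G})$ with $\operatorname{ext}\mathcal{G} \subseteq \operatorname{ext}\B_M$, the set of maximal partial isometries of $M$. The idea is to extract the largest partial isometry $v$ on which all members of $\mathcal{G}$ agree isometrically: to $x \in \B_M$ with polar decomposition $x = u_x|x|$ one associates $u_x\,e_{\{1\}}(|x|)$, where $e_{\{1\}}(|x|)$ is the spectral projection of $|x|$ at $1$, and one observes that whenever $m = \tfrac{1}{2}(x_1 + x_2) \in \mathcal{G}$ the equality case of the triangle inequality forces $x_1$ and $x_2$ to coincide with a common partial isometry on the range of $e_{\{1\}}(|m|)$. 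A monotone limiting argument over convex combinations of members of $\mathcal{G}$, using $\sigma$-weak closedness, then produces a partial isometry $v$ that is maximal subject to $xv^*v = v$ for all $x \in \mathcal{G}$ (so that $\mathcal{G} \subseteq \mathcal{F}_v$); and since $\mathcal{G}$ cannot be a proper subface of $\mathcal{F}_v$---such a subface would, after translating into the corner $(1-vv^*)M(1-v^*v)$, furnish a partial isometry strictly larger than $v$ still satisfying the defining condition---one concludes $\mathcal{G} = \mathcal{F}_v$. The self-adjoint part of $(e)$ runs in parallel, with signed spectral projections, once one notes that a self-adjoint partial isometry is $p - q$ with $pq = 0$ and $v^2 = p + q$; this also records the orthogonal pair appearing in $(b)$.

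For the remaining statements one works inside $A^{**}$. Given a norm-closed face $\F$ of $\B_A$, its weak$^*$-closure $\overline{\F}^{\,w^*}$ in $\B_{A^{**}}$ is a $\sigma$-weakly closed face (Goldstine's theorem gives density of $\B_A$, and the face property survives the closure), hence $\overline{\F}^{\,w^*} = \mathcal{F}_v$ for a unique partial isometry $v \in A^{**}$ by $(e)$; since $\F$ is norm-closed one checks $\F = \mathcal{F}_v \cap A = \{x \in \B_A : xv^* = vv^*\}$. The remaining task is to see which $v$ occur, and this is precisely where norm-closedness (rather than mere weak$^*$-closedness) is decisive: via Akemann's theory of closed and compact projections and noncommutative Urysohn-type lemmas, one shows these are exactly the $v$ belonging locally to $A$ (nonemptiness of $\F$ supplies a norm-one $x \in A$ with $xv^* = vv^*$, and the rest forces $v^*v$ to be a compact projection). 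The self-adjoint case $(b)$ for a C$^*$-algebra is the same with $v = p - q$, while $(c)$, $(d)$, and the predual part of $(f)$ follow from $(e)$ and the duality $\F \mapsto \F^{\prime}$ of Theorem \ref{thm-AP1}: a norm-closed proper face of $\B_{M_*}$ is carried to a $\sigma$-weakly closed face $\mathcal{F}_v$ of $\B_M$ with $v \neq 0$, and $(\mathcal{F}_v)_{\prime} = \{v\}_{\prime}$ by a short computation (a norm-one normal functional peaking at $v$ vanishes on $(1-vv^*)M$ and on $M(1-v^*v)$). I expect the main obstacle to be the substantial half of $(e)$---the extraction of $v$ from an abstract face and the proof that the face is \emph{exactly} $\mathcal{F}_v$---with the identification of the locally-belonging condition in the C$^*$-case a close second.
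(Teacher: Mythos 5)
A preliminary remark: the paper offers no proof of this theorem --- it is quoted from Akemann and Pedersen \cite{AP} (with \cite{ER} for the von Neumann algebra case) --- so your outline can only be measured against those sources. Your architecture (prove $(e)$ first, then descend to $A$ through $A^{**}$ and dualize) does match the shape of the actual arguments, and your easy half of $(e)$ --- the identification of $\{x\in \B_M \mid xv^*=vv^*\}$ with $v+(1-vv^*)\B_M(1-v^*v)$, the face property via strict convexity of $\H$, and uniqueness --- is essentially right, except that in the chain $v=vv^*w=ww^*w=w$ the first equality is not yet available; you should first deduce $v^*v=w^*w$ from $v=wv^*v$ and $w=vw^*w$ and only then conclude $v=w$.

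The genuine gap is the step you yourself call the substantial half of $(e)$, and the mechanism you propose for it does not work as stated. You want to extract a nonzero common partial isometry from a $\sigma$-weakly closed proper face $\mathcal{G}$ by examining $e_{\{1\}}(|m|)$ for midpoints $m$ of extreme points. Your observation about the equality case is correct as far as it goes, but this spectral projection can vanish even for a norm-one midpoint of two extreme points: in $M=L^\infty[0,1]$ take $u_1=1$ and $u_2=e^{i\theta}$ with $\theta(t)>0$ everywhere and $\theta(t)\to 0$ as $t\to 0$; then $\|\tfrac12(u_1+u_2)\|=1$ while $e_{\{1\}}(|\tfrac12(u_1+u_2)|)=0$. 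What excludes such configurations from a $\sigma$-weakly closed proper face is precisely the content of the theorem, so the ``monotone limiting argument'' carries the entire burden and none of it is supplied; a maximal $v$ with $xv^*v=v$ for all $x\in\mathcal{G}$ certainly exists by Zorn's lemma, but nothing you have written shows it is nonzero, and your ``no proper subface'' step only reduces $\mathcal{G}=\mathcal{F}_v$ to that same nonvanishing claim applied in the corner. The route in \cite{AP} and \cite{ER} avoids this by working through the predual: for a norm-one normal functional $\varphi$ the set $\{x\in\B_M\mid \varphi(x)=1\}$ is computed explicitly from the polar decomposition of $\varphi$ as some $\mathcal{F}_{w_\varphi}$, one shows a $\sigma$-weakly closed face is the intersection of such sets, and a nonempty intersection of sets $\mathcal{F}_{w_\varphi}$ is $\mathcal{F}_v$ with $v=\sup_\varphi w_\varphi$.

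Two further steps are asserted rather than proved. First, ``the face property survives the closure'' is not a formal consequence of Goldstine's theorem: the closure of a face of a dense convex subset need not be a face of the ambient compact convex set, and in \cite{AP} this passage from $\B_A$ to $\B_{A^{**}}$ is exactly where the theory of closed and compact projections and the noncommutative Urysohn lemmas enter; likewise the identification of the admissible $v$ as those belonging locally to $A$ is the C$^*$-algebraic heart of $(a)$--$(d)$ and is only named. Second, deriving $(c)$, $(d)$ and part of $(f)$ from Theorem \ref{thm-AP1} is close to circular: in \cite{AP} the order-reversing bijection of Theorem \ref{thm-AP1} is a consequence of the concrete descriptions you are trying to prove, not an independent input. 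For $(f)$ this is repairable, since the polar of a norm-closed proper face of $\B_{M_*}$ is visibly an intersection of weak$^*$-closed faces of $\B_M$ and $(e)$ then applies, though one must still verify $\F=(\F^{\prime})_{\prime}$ directly; for $(c)$ and $(d)$ you should not treat Theorem \ref{thm-AP1} as a black box.
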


See also \cite{ER} for a variant of this result in the setting of JBW$^*$-triples.

\section{On Tingley's problem between unital C$^*$-algebras}\label{C*}
For a unital C$^*$-algebra $A$, the symbol $\U(A)$ will denote the group of unitaries in $A$, and $\P(A)$ stands for the set of projections in $A$. These substructures contain a lot of information about $A$. What we focus on in this section is the group $\U(A)$. 

In the proof of \cite[Theorem 4.12]{Ta17}, Tanaka showed that if $T : S(M)\to S(N)$ is a surjective isometry between the unit spheres of two finite von Neumann algebras, then $T$ restricts to a bijection between their unitary groups, i.e.\ $T(\U(M)) = \U(N)$.
Recently, this result was extended to the case of general von Neumann algebras by Fern\'andez-Polo and Peralta \cite[Theorem 3.2]{FP}.
We further extend these results to the case of arbitrary unital C$^*$-algebras using somewhat a different method. We would like to use the notation $\E(X) := \operatorname{ext} (\B_X)$ for the set of extreme points of $\B_X$ where $X$ is a Banach space. 

Recall that, if $A$ is a unital C$^*$-algebra, then   
\[\E(A) = \{x \in S(A) \mid (1-xx^*)A(1-x^*x) = \{0\}\}
\] 
is the set of maximal partial isometries in $A$ and in particular $\U(A) \subset \E(A)$ (see for example \cite[Theorem 7.3.1]{KR}).

\begin{lemma}\label{lem-C*}
Let $A$ be a unital C$^*$-algebra and $x \in \E(A)$. Then $x$ is in $\U(A)$ if and only if the set
$\A_x := \{ y \in \E(A) \mid \| x \pm y\| = \sqrt{2}\}$
has an isolated point as a metric space.
\end{lemma}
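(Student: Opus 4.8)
The plan is to analyze the set $\A_x$ explicitly when $x$ is a maximal partial isometry in $A$. Writing $p = xx^*$ and $q = x^*x$, a norm-one element $y \in S(A)$ satisfies $\|x - y\| = \sqrt{2}$ together with $\|x + y\| = \sqrt{2}$; squaring and using the C$^*$-identity on $(x \pm y)^*(x \pm y)$, the parallelogram-type relation $\|x-y\|^2 + \|x+y\|^2 = 4$ forces both distances to be $\sqrt2$ simultaneously exactly when a certain positivity/orthogonality condition holds. I would first reduce to the case where $y$ is itself a maximal partial isometry (this is the hypothesis $y \in \E(A)$) and translate the condition $\|x \pm y\|^2 = 2$ into the operator statement that $x^*y$ (equivalently $y^*x$) is \emph{skew}, i.e.\ $x^*y + y^*x = 0$ after suitable normalization, by examining the spectrum of $(x\pm y)^*(x\pm y) = 2 \pm (x^*y + y^*x)$ near the endpoint $2$. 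Concretely, $\|x\pm y\| = \sqrt2$ for both signs iff $\operatorname{sp}(x^*y + y^*x) \subset [-\delta, \delta]$ with the right constant, and combined with $\|x^*y\|\le 1$ this pins down $y$ up to the structure $y = xu$ for $u$ a unitary (or partial isometry) in the appropriate corner with $u + u^*$ having the prescribed spectrum.

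Next I would treat the two directions separately. If $x \in \U(A)$, then left multiplication by $x$ is an isometric bijection of $S(A)$ carrying $\E(A)$ onto itself, so $\A_x$ is isometric to $\A_1 = \{u \in \E(A) : \|1 \pm u\| = \sqrt2\}$; an element $u \in \E(A)$ lies in $\A_1$ precisely when $u + u^*$ has spectrum contained in the set pinning the distances at $\sqrt2$, and I expect this to force $u^2 = -1$, i.e.\ $u$ is a self-adjoint-up-to-sign symmetry, so that $\A_1$ consists of the "imaginary unitary symmetries." The point is that $i \cdot 1 \in \A_1$ (or a suitable central element) is \emph{isolated}: any other element $u$ of $\A_1$ with $\|u - i\|$ small would have $u + u^* $ small, but the constraint forces $u + u^*$ to be far from $0$ unless $u$ equals $\pm i$ exactly — here one uses that $A$ is unital so the scalar $i$ is available. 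Thus $\A_x$ has an isolated point.

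Conversely, if $x \in \E(A) \setminus \U(A)$, then $p = xx^* < 1$ or $q = x^*x < 1$; say $1 - q \neq 0$. I would show $\A_x$ has no isolated point by producing, around any $y \in \A_x$, a nontrivial continuous path or a sequence of distinct elements of $\A_x$ accumulating at $y$: using the nonzero corner $(1-p)A(1-q)$ is impossible since $x$ is maximal, but one can perturb inside the abundant corner coming from the \emph{unitary} part — replacing $y$ by $y \exp(ith)$ for small self-adjoint $h$ in a commutative subalgebra of $A$ large enough (available because $A$ is infinite-dimensional in the relevant corner, or because the non-unitary assumption provides room) keeps the spectral constraint $\operatorname{sp}((x\pm y')^*(x\pm y')) $ at its endpoint while moving $y'$ continuously. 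The heart of the argument is to verify that this perturbation stays inside $\E(A) \cap \A_x$, which is where the explicit description from Theorem \ref{thm-AP2}$(a)$ of faces and maximal partial isometries, together with the failure of $x$ to be unitary, gets used.

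The main obstacle I anticipate is the converse direction: showing that non-unitarity of $x$ genuinely produces a \emph{continuum} (or at least a non-discrete set) inside $\A_x$, rather than just a few extra points. One must exploit that when $x$ is a non-unitary maximal partial isometry, the support and range projections differ from $1$, and hence the C$^*$-algebra contains, via functional calculus on $x^*x$ or on elements of $A$ dominating the compact projections $p$, $q$, enough room to deform. I would handle this by a direct computation with a single well-chosen one-parameter family and then argue it lies in $\A_x$; the unital hypothesis is essential throughout, both for the forward direction (to have $i\cdot 1$ available as the isolated point) and for identifying $\E(A)$ with the maximal partial isometries.
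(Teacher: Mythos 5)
Your forward direction has the right skeleton (reduce to $x=1$, show every $y\in\A_x$ has the form $ix(1-2p)$ with $p\in\P(A)$), but the isolation argument you give for $i\cdot 1$ is incorrect as stated. You claim the constraint ``forces $u+u^*$ to be far from $0$ unless $u=\pm i$,'' whereas the constraint forces $u+u^*=0$ for \emph{every} $u\in\A_1$; the set $\A_1=i(1-2\P(A))$ can be highly non-discrete (in $M_2(\mathbb{C})$ it contains a $2$-sphere of points). The correct reason $\pm i$ are isolated is simply that $\|i(1-2p)-i\|=2\|p\|=2$ for every projection $p\neq 0$, so $i$ sits at distance $2$ from the rest of $\A_1$. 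Also, the ``parallelogram-type relation'' $\|x-y\|^2+\|x+y\|^2=4$ is false in a C$^*$-algebra (take $x=1$ and $y$ a unitary with $\sigma(y+y^*)=[-2,2]$); you do not need it, since both norms are assumed equal to $\sqrt2$ by hypothesis.

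The genuine gap is the converse, which is the technical heart of the lemma and which your proposal does not actually carry out. The perturbation $y\mapsto y e^{ith}$ has no reason to stay in $\A_x$: the two conditions $\|x\pm ye^{ith}\|=\sqrt2$ are not preserved by generic unitary perturbations of $y$, and nothing in your sketch verifies them. What is needed is a case analysis on how $y$ interacts with the defect projections of $x$. If $(1-xx^*)y\neq 0$ (or $(1-x^*x)y^*\neq0$), one perturbs by the unitaries $xx^*+c(1-xx^*)$, $c\in\mathbb{T}$, which \emph{fix $x$ under left multiplication} and hence manifestly preserve $\|x\pm\,\cdot\,\|$ --- this is the mechanism your $e^{ith}$ lacks. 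In the remaining case one is forced (via a central-projection argument in $\overline{A}^{sot}$) to $xx^*=yy^*$, $x^*x=y^*y$, whence $y=i(xx^*-2q)x$ for a projection $q\le xx^*$ in $A$, and a further dichotomy is required: if $q$ is not isolated in $\P(xx^*Axx^*)$ one moves $q$; if it is isolated, $q$ is central in that corner (Russo--Dye), and one must exploit the nonzero unital corner $(1-xx^*)A(1-xx^*)$ --- which exists precisely because $x$ is not unitary --- to build an explicit one-parameter family in $\A_x$ converging to $y$. None of these constructions, nor the dichotomies that make them necessary, appear in your sketch; ``the non-unitary assumption provides room'' is exactly the point that has to be proved, and your appeal to infinite-dimensionality is neither available nor relevant (the corner may well be one-dimensional).
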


The idea of this lemma comes from the easiest case $A = \mathbb{C}$: 
for $x \in \E(A) = \U(A) = \{ z \in \mathbb{C} \mid |z| = 1\}$, we see $\A_x = \{ix, -ix\}$.
\begin{proof}
First realize $A$ as a unital C$^*$-subalgebra of some $\B(\H)$ (the algebra of bounded linear operators on a complex Hilbert space $\H$).

Suppose $x$ is in $\U(A)$. 
For $y \in \A_x$, we have $2 = \|x \pm y\|^2 = \|1 + y^*y \pm (x^*y + y^*x)\|$.
Decompose $\H$ in the form
$\H =  y^*y \H \oplus (1 - y^*y) \H$.
Using this decomposition, we express
\[
x^*y =
\begin{pmatrix}
z_1 & 0 \\
z_2 & 0
\end{pmatrix}
:
\begin{matrix}
 y^*y \H \\
\oplus \\
(1 - y^*y) \H
\end{matrix}
\to
\begin{matrix}
 y^*y \H \\
\oplus \\
(1 - y^*y) \H
\end{matrix}.
\]
By the same decomposition, we can express
\[
1 + y^*y \pm (x^*y + y^*x) =
\begin{pmatrix}
2 \pm (z_1 + z_1^*) & \pm z_2^* \\
\pm z_2 & 1
\end{pmatrix}.
\]
Since $2 \in 2 \E(\B(y^*y \H))$, by the norm condition we obtain $z_1 + z_1^* = 0$ and $z_2 = 0$. 
Since $x \in \U(A)$, it follows that $x^*y \in \E(A)$. Combining this with the equation
\[
x^*y =
\begin{pmatrix}
z_1 & 0 \\
0 & 0
\end{pmatrix}
 =
\begin{pmatrix}
-z_1^* & 0 \\
0 & 0
\end{pmatrix},
\]
we have $x^*y \in \U(A)$ and the spectrum $\sigma(x^*y)$ of $x^*y$ is a subset of $\{i, -i\}$.
It follows that $\A_x = ix (1-2\P(A)) = i (1-2\P(A))x$, which has isolated points $\pm ix$. \smallskip

Next suppose $x \notin \U(A)$ and $y \in \A_x$. We show $y$ is not isolated in $\A_x$. We may assume $xx^* \neq 1$.
Suppose $(1-xx^*)y \neq 0$. For $c \in \mathbb{T} := \{z \in \mathbb{C} \mid |z| = 1\}$, set $y'_c := (xx^* + c(1-xx^*))y \, (\in \E(A))$. Then we have 
\[
\|x \pm y'_c\| = \|x \pm (xx^* + c(1-xx^*))y\| = \|(xx^* + \overline{c}(1-xx^*))x \pm y\| = \|x \pm y\| = \sqrt{2}.
\] 
Hence $y'_c \in \A_x$. Since $y'_c \to y\, (c \to 1)$, $y$ is not isolated in $\A_x$. 
Similarly, $y$ is not isolated in $\A_x$ if $(1-x^*x)y^* \neq 0$.
In what follows, we assume $(1-xx^*)y = 0 = (1-x^*x)y^*$. Then we obtain $xx^* \geq yy^*$ and $x^*x \geq y^* y$.

Since $y \in \E(A)$, we have $(1-yy^*)A(1-y^*y) = 0$. Taking the closure in the sot of $\B(\H)$, we also have $(1-yy^*)\overline{A}^{sot}(1-y^*y) = 0$. 
By the theory of von Neumann algebras, there exists a central projection $p$ in $\overline{A}^{sot}$ such that $y_1 := yp$ is an isometry on $p\H$ and $y_2 := y(1-p)$ is a coisometry (i.e.\ the adjoint operator of an isometry) on $(1-p)\H$. 
Set $x_1 := xp$ and $x_2 := x(1-p)$. Then it follows that $x_1^*x_1 = y_1^*y_1$, $x_1x_1^* \geq y_1y_1^*$ and $x_2x_2^* = y_2y_2^*$, $x_2^*x_2 \geq y_2^*y_2$. 
If $x_1x_1^* \neq y_1y_1^*$ or $x_2^*x_2 \neq y_2^*y_2$, then we have 
$\sqrt{2} = \| x \pm y \| = \max_{n = 1,2} \|x_n \pm y_n\| = 2$, a contradiction.
It follows that $xx^* = yy^*$ and $x^*x = y^*y$.
The same discussion as in the first half of this proof shows that there exists a projection $q$ in $A$ with $q \leq xx^*$ such that 
$y = i(1-2q)x = i(xx^*-2q)x$.

Suppose that $q$ is isolated in $\P(xx^*Axx^*)$. 
Let $a \in (xx^*Axx^*)_{sa}$. 
Since the mapping $\mathbb{R}\ni t\mapsto e^{ita} q e^{-ita} \in \P(xx^*Axx^*)$ is norm-continuous, we obtain $e^{ia} q e^{-ia} = q$.   
By the Russo-Dye theorem (see for example Exercise 10.5.4 of \cite{KR}) it follows that $q$ is central in $xx^*Axx^*$. 
In this case, we have 
$y''_{\theta} := yxx^* + (y\cos{\theta} + \sin{\theta})(1-xx^*) \in \E(A)$
for $\theta \in \mathbb{R}$, and simple calculations show that
\[
\frac{1}{\sqrt{2}}(x \pm y''_{\theta}) =
\frac{1}{\sqrt{2}}((x \pm y)xx^* + (x \pm (y\cos{\theta} + \sin{\theta}))(1-xx^*))
\]
are partial isometries. 
In particular, $y''_{\theta} \in \A_x$. Since $y''_{\theta} \to y$ $(\theta \to 0)$, $y$ is not isolated in $\A_x$.

If $q$ is not isolated in $\P(xx^*Axx^*)$, take $q_n \in \P(xx^*Axx^*)$, $n \in \mathbb{N} = \{1, 2, \ldots\}$ such that $q \neq q_n \to q \, (n \to \infty)$.
Then we have $y \neq i(xx^* - 2q_n)x =: y'''_n\in \A_x$ and $y'''_n \to y \, (n \to \infty)$.
\end{proof}

Now we can prove the main theorem of this section.
\begin{theorem}\label{thm-C*}
Let $A$ and $B$ be unital C$^*$-algebras and $T : S(A)\to S(B)$ be a surjective isometry.  Then $T(\U(A)) = \U(B)$.
\end{theorem}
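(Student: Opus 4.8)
The plan is to characterize the unitary group $\U(A)$ purely in terms of the metric structure of $\E(A) = \operatorname{ext}(\B_A)$, so that the surjective isometry $T$ automatically carries it to $\U(B)$. By Proposition~\ref{prop-T}, $T$ restricts to a bijection $\E(A) \to \E(B)$, and this bijection is an isometry for the metric inherited from the sphere. So it suffices to express the property ``$x \in \U(A)$'' in a way that refers only to distances between elements of $\E(A)$. Lemma~\ref{lem-C*} does exactly this for one element: $x \in \E(A)$ lies in $\U(A)$ if and only if the set $\A_x = \{y \in \E(A) \mid \|x \pm y\| = \sqrt{2}\}$ has an isolated point. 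Since $\A_x$ is defined by distance conditions relating $x$ and $y$ (note $\|x+y\|=2-\|x-(-y)\|$ is not quite it, but $\|x-y\|=\sqrt2$ is directly a distance, and $\|x+y\|=\sqrt2$ can be rewritten using Proposition~\ref{prop-face}(a) together with the fact that $-\E(A)=\E(A)$), the whole condition is metric.

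The key steps, in order: first, observe that $\E(A)$ is a metric space on which $T$ acts as a surjective isometry onto $\E(B)$ — the surjectivity and injectivity come from Proposition~\ref{prop-T} applied to $T$ and to $T^{-1}$ (a singleton $\{x\}$ is a face iff $x$ is extreme), and the isometry property is inherited from $S(A)$. Second, check that membership in the set $\A_x$ is preserved: for $x, y \in \E(A)$ we need $T$ to respect both conditions $\|x-y\|=\sqrt2$ and $\|x+y\|=\sqrt2$. The former is immediate since $T$ is an isometry. For the latter, use Proposition~\ref{prop-face}(a): maximal convex subsets of $S(A)$ are carried to maximal convex subsets, and $-\F \mapsto -T(\F)$, from which one deduces $T(-y) = -T(y)$ for $y \in \E(A)$ (since $-y \in \E(A)$ and $\{-y\}$ is the unique maximal convex set... — actually more directly, since $x \mapsto -x$ is characterized metrically as the unique point at distance $2$ from a maximal face containing $x$, or one invokes $(a)$ applied to the face generated by $y$). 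Hence $\|x+y\| = \|x - (-y)\| = \|T(x) - T(-y)\| = \|T(x) + T(y)\|$, so $T(\A_x) = \A_{T(x)}$. Third, conclude: $\A_x$ has an isolated point iff $\A_{T(x)}$ does, since $T|_{\A_x} : \A_x \to \A_{T(x)}$ is a surjective isometry and isolatedness is a metric invariant. By Lemma~\ref{lem-C*}, $x \in \U(A) \iff T(x) \in \U(B)$. Thus $T(\U(A)) = \U(B)$.

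The main obstacle is the careful handling of the ``$\|x+y\| = \sqrt2$'' half of the definition of $\A_x$: one must be certain that $T(-y) = -T(y)$ for extreme points $y$, which is not part of the hypothesis $T(-z) = -T(z)$ for all $z \in S(A)$ (that would be assuming what we essentially want to prove later about linearity). This is supplied by Proposition~\ref{prop-face}(a): the crux is that for an extreme point $y$, its ``antipode'' $-y$ is recoverable from the metric geometry of $S(A)$ — specifically, if $\F$ is a maximal convex set containing $y$, then $-y$ is the unique point of $-\F$, and Proposition~\ref{prop-face}(a) gives $T(-\F) = -T(\F)$; combined with $T(\F)$ being a maximal convex set whose ``center'' structure is preserved. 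One small check: an extreme point need not be the unique element of a maximal convex set (e.g.\ a vertex of a square lies in two maximal faces), so the cleanest route is to note that $-y$ is the unique point $z \in S(A)$ with $\|z - w\| = 2$ for every $w$ in \emph{some} maximal convex set containing $y$, and that this characterization transfers under $T$ via Proposition~\ref{prop-face}(a). Everything else — the isometry bookkeeping, the transfer of isolatedness — is routine once this antipodal compatibility is in hand.
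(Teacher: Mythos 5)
Your proposal follows the paper's proof essentially step for step: use Proposition~\ref{prop-T} to see that $T$ restricts to a bijection $\E(A)\to\E(B)$, show $T(-y)=-T(y)$ for extreme points so that $T(\A_x)=\A_{T(x)}$, and then invoke Lemma~\ref{lem-C*} since isolatedness is a metric invariant. The one place where your write-up goes astray is the justification of $T(-y)=-T(y)$: the characterization you propose, that $-y$ is \emph{the unique} point $z\in S(A)$ with $\|z-w\|=2$ for every $w$ in some maximal convex set $\F$ containing $y$, is false in general, because the set of such $z$ is all of $-\F$ (this is exactly what the proof of Proposition~\ref{prop-face}$(a)$ establishes), and $-\F$ need not be a singleton. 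The clean route, and the one the paper takes, is Proposition~\ref{prop-face}$(b)$: for $y\in\E(A)$ the singleton $\{y\}$ is itself a norm-closed proper face of $\B_A$, so $(b)$ applies directly and gives $T(-y)=-T(y)$ with no detour through maximal convex sets. With that substitution your argument is complete and coincides with the paper's.
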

\begin{proof}
We know by Proposition \ref{prop-T} that $T(\E(A)) = \E(B)$ and by $(b)$ of Proposition \ref{prop-face} that $T(-x) = -T(x)$ for every $x \in \E(A)$.
It follows that $T(\A_x) = \A_{T(x)}$ for every $x \in \E(A)$. Therefore the preceding lemma implies $T(\U(A)) = \U(B)$.
\end{proof}
Recall the following theorem due to Hatori and Moln\'ar \cite{HM}:
\begin{theorem}[Hatori and Moln\'ar {\cite[Theorem 1]{HM}}]\label{thm-HM}
Let $A$ and $B$ be unital C$^*$-algebras and $T : \U(A)\to \U(B)$ be a surjective isometry. 
Then there exists a real linear surjective isometry $\widetilde{T} : A \to B$ which satisfies $T(e^{ia}) = \widetilde{T}(e^{ia})$ for every $a \in A_{sa}$. 
In fact, there exists a Jordan $^*$-isomorphism $J : A \to B$ and a central projection $p \in B$ such that $\widetilde{T}(x) = T(1)(pJ(x) + (1-p)J(x)^*)$ for all $x \in A$.
\end{theorem}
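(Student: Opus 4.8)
The plan is to first force a real-linear isometry on the self-adjoint parts from the behaviour of $T$ near the identity, and then to read off the algebraic form. One may assume $T(1)=1$: replacing $T$ by $v\mapsto T(1)^*T(v)$ (again a surjective isometry of the unitary groups) changes the conclusion only by a final left multiplication by $T(1)$, so it suffices to treat the normalized case.

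The engine is a Mazur--Ulam-type reflection argument carried out on the unitary group. For $u,v\in\U(A)$ with $\|u-v\|$ small, the spectrum of $u^*v$ misses $-1$, so the principal square root $(u^*v)^{1/2}$ makes sense and $m(u,v):=u(u^*v)^{1/2}$ is the unique metric midpoint of $u$ and $v$ inside a suitable small ball; moreover $z\mapsto m(u,v)\,z^*\,m(u,v)$ is a surjective isometry of $\U(A)$ that fixes $m(u,v)$, interchanges $u$ and $v$, and — on a small enough neighbourhood, since the only self-adjoint unitary within distance $2$ of $1$ is $1$ itself — has no further fixed point nearby. Running the reflection argument (as in Väisälä's proof of the Mazur--Ulam theorem) with these involutions, while keeping the auxiliary orbits inside regions where midpoints are unique in both $\U(A)$ and $\U(B)$ (possible since $\|T(u)-1\|=\|u-1\|$), shows that $T$ preserves local geodesic midpoints: $T(m(u,v))=m(T(u),T(v))$ whenever $u,v$ lie near each other and near $1$. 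Writing $T(e^{ia})=e^{i\psi(a)}$ for small $a\in A_{sa}$, this midpoint identity, together with positive homogeneity (rescale the one-parameter subgroup $t\mapsto e^{ita}$) and continuity of $\psi$ (it is a local isometry), upgrades $\psi$ to a genuine real-linear surjective isometry $\psi:A_{sa}\to B_{sa}$ with $T(e^{ia})=e^{i\psi(a)}$ for all $a\in A_{sa}$ (extending along one-parameter subgroups).

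Now apply the classification of surjective linear isometries between the self-adjoint parts of unital C$^*$-algebras (going back to Kadison \cite{Ka51}): $\psi=\sigma\,J_0$ for a central symmetry $\sigma\in B$ and a Jordan $^*$-isomorphism $J_0:A_{sa}\to B_{sa}$. Let $J:A\to B$ be the complex-linear Jordan $^*$-isomorphism extending $J_0$, let $p:=\tfrac{1}{2}(1+\sigma)\in B$ (a central projection), and define $\widetilde{T}(x):=pJ(x)+(1-p)J(x)^*$. Since $p$ is central, $\widetilde{T}$ is a real-linear surjective isometry, as $\|\widetilde{T}(x)\|=\max\{\|pJ(x)\|,\|(1-p)J(x)\|\}=\|x\|$; and for $a\in A_{sa}$, using that $J$ preserves powers of the single element $ia$ (hence its exponential), one computes $\widetilde{T}(e^{ia})=pe^{iJ(a)}+(1-p)e^{-iJ(a)}=e^{i(2p-1)J(a)}=e^{i\sigma J_0(a)}=e^{i\psi(a)}=T(e^{ia})$. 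Undoing the normalization yields the stated formula.

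I expect the second paragraph to be the crux. The Mazur--Ulam machinery has to be run on the unitary group, which is neither convex nor geodesically unique in the large, so the neighbourhoods must be chosen carefully so that local midpoints are unique, the reflection maps are honest surjective isometries of $\U(A)$ and $\U(B)$ with a single local fixed point, and the orbits generated by the argument stay within the controlled region; the extension along one-parameter subgroups also requires a modest uniqueness-of-geodesics input in the Banach setting. By contrast, the algebraic identification in the third paragraph, once the isometry $\psi$ is in hand, is essentially Kadison's theorem together with bookkeeping about the central symmetry $\sigma$.
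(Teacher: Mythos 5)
First, a point of reference: the paper does not prove this statement at all --- it is quoted from Hatori and Moln\'ar \cite{HM} --- so your attempt can only be measured against their published argument. Your overall architecture (normalize so that $T(1)=1$, run a Mazur--Ulam-type argument on the unitary group using the reflections $z\mapsto mz^{*}m$, then invoke Kadison's classification \cite{Ka51,Ka52} to produce $J$ and the central projection $p$) does match theirs in outline, and your third paragraph is correct bookkeeping once a real-linear $\psi$ on $A_{sa}$ is in hand. The reflections $z\mapsto mz^{*}m$ are indeed the right objects: they are surjective isometries of $\U(A)$ because the norm metric on $\U(A)$ is bi-invariant and inversion-invariant, and with $m=u(u^{*}v)^{1/2}$ they swap $u$ and $v$ and fix $m$.

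The gap is in the step you yourself flagged as the crux, and it is worse than a uniqueness issue: for the operator-norm metric, \emph{distinct unitaries have no metric midpoint at all}, so $m(u,v)$ cannot be characterized as ``the unique metric midpoint in a small ball.'' Already in $\U(\mathbb{C})=\mathbb{T}$, a point $w$ with $|1-w|=|w-e^{i\theta}|=\tfrac{1}{2}|1-e^{i\theta}|$ would have to lie on the open chord joining $1$ and $e^{i\theta}$, which misses the circle; and indeed $\|u-m(u,v)\|=2\sin(\theta/4)>\sin(\theta/2)=\tfrac{1}{2}\|u-v\|$ for scalars, so $m(u,v)$ is a geodesic midpoint for the intrinsic metric but not a metric midpoint for the norm. (The nonexistence persists in any unital C$^*$-algebra: using that $\|1-w\|$ is the spectral radius of $1-w$ and that the numerical range of a unitary lies in the closed convex hull of its spectrum, one checks that $\|1-w\|=\|w-v\|=\tfrac{1}{2}\|1-v\|$ is impossible for $w,v\in\U(A)$ with $v\neq 1$.) Consequently a surjective isometry for the norm metric cannot be shown to ``preserve local geodesic midpoints'' by any midpoint-based Mazur--Ulam argument, no matter how the neighbourhoods are shrunk. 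Hatori and Moln\'ar avoid this entirely: their generalized Mazur--Ulam lemma characterizes the fixed point $b$ of the reflection $\phi_{b}(x)=bx^{*}b$ by a displacement inequality of the form $d(\phi_{b}(x),x)\geq K\,d(x,b)$ with a constant $K>1$ on a suitable $\phi_{b}$-invariant subset (the same device, via Vogt's theorem \cite{V}, that this paper uses in Proposition \ref{prop-C*}), and conclude $T(ba^{*}b)=T(b)T(a)^{*}T(b)$ rather than any midpoint identity. A secondary gap of the same flavour: even granting a preserved ``midpoint,'' $m(e^{ia},e^{ib})\neq e^{i(a+b)/2}$ for noncommuting $a,b$ (Baker--Campbell--Hausdorff), so what the argument can deliver is preservation of the triple product $uv^{*}u$, and converting that into additivity of $\psi$ on all of $A_{sa}$ is substantive work in \cite{HM}, not a rescaling-plus-continuity remark. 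As written, the proof does not close at its central step.
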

The Russo-Dye theorem shows that such a linear isometry is unique.
In order to solve Tingley's problem between unital C$^*$-algebras, it suffices to show that $\Phi := \widetilde{T}^{-1} \circ T : S(A) \to S(A)$ (in the sense of the preceding theorems) is equal to the identity mapping on $S(A)$.
Before we end this section we give an additional partial result.
The notation $A^{-1}$ means the set of invertible elements for a unital C$^*$-algebra $A$.

\begin{proposition}\label{prop-C*}
Let $A$ be a unital C$^*$-algebra and $\Phi : S(A) \to S(A)$ be a surjective isometry such that $\Phi(e^{ia}) = e^{ia}$ for every $a\in A_{sa}$. Then $\Phi (x) = x$ for every $x \in S(A) \cap A^{-1}$.
\end{proposition}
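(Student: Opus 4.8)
The plan is to exploit the polar decomposition of an invertible element together with the hypothesis that $\Phi$ fixes every unitary of the form $e^{ia}$. Given $x \in S(A) \cap A^{-1}$, write its polar decomposition $x = u|x|$, where $u \in \U(A)$ and $|x| = (x^*x)^{1/2}$ is a positive invertible element with $\|x\| = \||x|\| = 1$; note $u = e^{ib}$ for some $b \in A_{sa}$ by the Russo-Dye/exponential argument (every unitary in a unital C$^*$-algebra lies in the connected component, but more to the point, invertibility of $x$ makes $u$ an exponential of a self-adjoint element via $\log$ on the spectrum of the unitary, which avoids $-1$ after a suitable rotation; one argues on $\overline{\{x\}''}$ or simply decomposes $u$ as a product of two exponentials and handles each). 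The key point is that elements of $S(A) \cap A^{-1}$ are positive scalar-free "rotations" of positive invertibles, and $\Phi$ is already understood on the rotational part.

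First I would reduce to the case $x = a$ positive invertible with $\|a\| = 1$. Indeed, if the proposition holds for such $a$, then for general invertible $x = e^{ib}|x|$ one would like to transport by the isometry $\Phi$ along the curve or by composing with the map $y \mapsto e^{-ib}y$ — but $\Phi$ need not commute with left multiplication by unitaries, so instead I would argue directly. For a positive invertible $a$ with $\|a\| = 1$ and $0 < \varepsilon \le 1/\|a^{-1}\|$, the element $a - \varepsilon 1$ is still positive, so $a = (a - \varepsilon 1) + \varepsilon 1$ sits on a line segment; more usefully, consider the unitaries $e^{i t \log a}$ obtained from the (bounded, self-adjoint, since $a \ge \delta 1 > 0$) logarithm $\log a \in A_{sa}$. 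Then $\Phi(e^{it\log a}) = e^{it\log a}$ for all $t$ by hypothesis. The idea is to recover $a$ itself from the family $\{e^{it\log a}\}$ by a metric/limiting device that $\Phi$ must respect: for instance, $a$ should be characterized among elements of $S(A)$ by its distances to all the $e^{it\log a}$ together with the extremal structure, or one writes $a = \lim$ of convex-type combinations of $e^{it\log a}$ and $e^{-it\log a}$ (Russo-Dye again: $a$ is a limit of averages of unitaries built from $\log a$). Since $\Phi$ is a surjective isometry preserving $\pm$ (Proposition \ref{prop-face}) and preserving faces/extreme points (Proposition \ref{prop-T}), I would show $\Phi(a)$ satisfies the same system of distance equations $\|\Phi(a) - e^{it\log a}\| = \|a - e^{it\log a}\|$ that pins $a$ down uniquely inside $S(A)$.

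The crux is therefore a \emph{uniqueness} statement: if $a$ is a positive invertible norm-one element of $A$ and $z \in S(A)$ satisfies $\|z - e^{it \log a}\| = \|a - e^{it\log a}\|$ for all $t \in \mathbb{R}$ (equivalently for $t$ in a neighbourhood of $0$, or even just $\|z - u\| = \|a - u\|$ for $u = e^{i\log a}$ and $u = 1$), then $z = a$. To see why something like this should work: compute $\|a - 1\| = 1 - \delta$ where $\delta = \min \sigma(a) > 0$, and $\|a - u\|$ for $u$ close to $1$ controls the distance of $\sigma(a)$ from the relevant arc; passing to the C$^*$-algebra generated by $a$ (or to GNS representations / irreducible representations and using that for a Hilbert space operator these distances determine a positive contraction invertible operator) should force the functional calculus of $z$ to coincide with that of $a$. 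I expect the main obstacle to be exactly this rigidity lemma: showing that the collection of distances to the one-parameter unitary family $e^{it\log a}$ (all of which $\Phi$ fixes) genuinely determines $a$, as opposed to determining only $|z - a|$ or the "imaginary part" of $z$. If a direct distance computation is awkward, the fallback is to combine it with the facial machinery — use that $a$, being invertible, lies in the interior of no proper face, while peaking behaviour of $z$ against states must match that of $a$ by isometry — to squeeze out $z = a$. Once the rigidity lemma is in hand, the proposition follows immediately: $\Phi(x)$ and $x$ are equidistant from every fixed point $e^{i t \log a}$ (respectively from the corresponding unitaries in the polar decomposition case), hence equal.
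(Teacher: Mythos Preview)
Your proposal has two genuine gaps.

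First, the reduction step is not handled: in a general unital C$^*$-algebra not every unitary is an exponential $e^{ia}$, and the unitary $u$ in the polar decomposition of an invertible $x$ has no reason to avoid this obstruction. Writing $u$ as a product of two exponentials does not help, since the hypothesis only fixes single exponentials and $\Phi$ need not be multiplicative. The paper spends its first paragraph proving $\Phi(u)=u$ for \emph{every} unitary $u$: it uses functional calculus to produce two exponentials $f(u),g(u)\in e^{iA_{sa}}$ whose common minimal face in $\B_A$ (in the sense of Theorem~\ref{thm-AP2}) is determined by a partial isometry $v_1$, and similarly a $v_2$, so that the intersection of the two faces is $\{u\}$; since $\Phi$ preserves faces and fixes $f(u),g(u)$, it fixes $u$. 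Only after this can the transport $\Psi(y)=u_0^{-1}\Phi(u_0y)$ be used to reduce to the positive case.

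Second, and more seriously, your proposed rigidity lemma is false. Take $A=\mathbb{C}^2$ and $a=(1,\tfrac12)$, so $e^{it\log a}=(1,2^{-it})$. For any $z=(e^{i\theta},\tfrac12)$ with $0<|e^{i\theta}-1|\le\tfrac12$ one has $\|z\|=1$ and
\[
\|z-e^{it\log a}\|=\max\bigl(|e^{i\theta}-1|,\,|\tfrac12-2^{-it}|\bigr)=|\tfrac12-2^{-it}|=\|a-e^{it\log a}\|
\]
for every $t$, yet $z\neq a$. So the one-parameter commutative family $\{e^{it\log a}\}$ cannot pin down $a$ by distances alone. The paper's argument for positive invertible $a$ is entirely different: setting $c=\min\sigma(a)$ and $\mathcal{S}=\{u\in\U(A):\|u-a\|=1-c\}$, it first shows every $u\in\mathcal{S}$ has spectrum in $\{\operatorname{Re}\lambda\ge c/2\}$, then uses the involution $u\mapsto u^*$ and the polar-decomposition map $u\mapsto vu^*v$ on $\mathcal{S}$ together with Vogt's fixed-point theorem \cite{V} to force $\Phi(a)$ to be positive; finally a direct inner-product estimate against a carefully chosen unitary $w=f_0(a)$ (with $|t-f_0(t)|=1+c$) yields $\Phi(a)\le a$ and symmetrically $a\le\Phi(a)$. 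None of this is captured by your distance-to-$e^{it\log a}$ scheme.
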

\begin{proof}
First we show $\Phi(u) = u$ for an arbitrary unitary $u \in \U(A)$. 
Consider the functional calculus in $A^{**}$, and set $v_1 = u \chi_{\{\operatorname{Re} z \geq 0\}} (u), v_2 =u \chi_{\{\operatorname{Re} z \leq 0\}} (u) \in A^{**}$, which are partial isometries belonging locally to $A$.
Take continuous functions $f,\, g : \mathbb{T} \to \{z \in \mathbb{T} \mid \operatorname{Re} z \geq 0\}$ 
which satisfy the following two properties: 
$f(z) =z =g(z)$ for every $z \in \mathbb{T}$ with $\operatorname{Re} z \geq 0$, 
and $\operatorname{Im} f(z) > \operatorname{Im} g(z)$ for every $z \in \mathbb{T}$ with $\operatorname{Re} z < 0$. 
It follows that $v_1$ is the maximum partial isometry in the collection of every partial isometry $v_0\in A^{**}$ which satisfies $f(u)v_0^* =v_0v_0^*$ and $g(u)v_0^* =v_0v_0^*$. 
Thus the minimum norm-closed face in $\B_A$ which contains both $f(u)$ and $g(u)$ is 
the face $\{x \in S(A) \mid xv_1^* =v_1v_1^*\}$. 
Since $f(u), g(u) \in e^{iA_{sa}}$, it follows that $\Phi (\{x \in S(A) \mid xv_1^* =v_1v_1^*\}) = \{x \in S(A) \mid xv_1^* =v_1v_1^*\}$. 
Similarly, $\Phi (\{x \in S(A) \mid xv_2^* =v_2v_2^*\}) = \{x \in S(A) \mid xv_2^* =v_2v_2^*\}$.
Since $\{x \in S(A) \mid xv_1^* =v_1v_1^*\} \cap \{x \in S(A) \mid xv_2^* =v_2v_2^*\} = \{u\}$, we obtain $\Phi(u) =u$.\smallskip

Next we show $\Phi(a) = a$ for an arbitrary positive invertible element $a \in S(A) \cap A^{-1} \cap A_+$.
Set $c := \min(\sigma(a))\, (> 0)$ and 
\[
\mathcal{S} := \{u \in \U(A) \mid \|u - a\| = 1-c\} 
= \{u \in \U(A) \mid \|u - \Phi(a)\| = 1-c\}.
\]

We see $\operatorname{Re}\lambda \geq c/2$ for every $\lambda \in \sigma(u)$, $u \in \mathcal{S}$. 
Assume there exists a $\lambda \in \sigma(u)$ such that $\operatorname{Re} \lambda < c/2$. 
Realizing $A \subset \B(\H)$, we obtain unit vectors $\xi_n \in \H$, $n \in \mathbb{N}$ such that 
$\|u\xi_n - \lambda\xi_n\| \to 0$ $(n \to \infty)$. 
Then it follows that $\lim_{n \to \infty} \langle u\xi_n, \xi_n \rangle = \lambda$ and $\langle a\xi_n, \xi_n \rangle \geq c$ for every $n \in \mathbb{N}$.
We have $\|u - a\| \geq 1$, a contradiction.

We consider the surjective isometry $u \mapsto u^*$ on $\mathcal{S}$. 
By the observation above, it follows that 
\[
\|u^* - u\| = \|1 - u^2\| = \|(1+u)(1-u)\| \geq \left(1+ \frac{c}{2}\right) \|1-u\|
\]
for every $u \in \mathcal{S}$.
Since $1^* = 1 \in \mathcal{S}$, $1 + c/2 > 1$ and $\mathcal{S}$ is bounded, it follows by \cite[Theorem 1.2]{V} that 
$\tau(1) = 1$ for every surjective isometry $\tau: \mathcal{S}\to\mathcal{S}$.

Since $\|1 - \Phi(a)\| = \|1-a\| = 1-c < 1$, the polar decomposition $\Phi(a) = v|\Phi(a)|$ satisfies $v \in \U(A)$. 
For $u \in \mathcal{S}$, we have
\[
\|vu^*v - \Phi(a)\| = \|u^*v - |\Phi(a)|\| = \|v^*u - |\Phi(a)|\| = \|u - \Phi(a)\| = 1-c.
\]
Thus the mapping $u \mapsto vu^*v$ is a surjective isometry on $\mathcal{S}$. 
Therefore, by the commented result in \cite{V}, it follows that $1 = v1^*v = v^2$. 
Combining this with the equation $\|v^* - |\Phi(a)|\| = \|1-\Phi(a)\| = 1-c$, we obtain $v = 1$. i.e.\ $\Phi(a)$ is positive. 

Take the continuous function $f_0 : [c, 1] \to \{z \in \mathbb{T} \mid \operatorname{Im} z \geq 0\}$ 
which is uniquely determined by the condition $|t-f_0(t)| = 1 + c$, $t \in [c, 1]$. Put $w := f_0(a)$. Then $(a-w)/(1+c)$ is a unitary.
Assume $\Phi(a) \not\leq a$. Then there exist $\lambda > 0$ and 
unit vectors $\eta_n \in \H$, $n \in \mathbb{N}$ such that 
$\|(\Phi(a) - a)\eta_n - \lambda\eta_n\| \to 0$ $(n \to \infty)$. 
We have 
\[
\langle(\Phi(a)-w)\eta_n, (a-w)\eta_n\rangle 
= \langle(\Phi(a)-a)\eta_n, (a-w)\eta_n\rangle + (1+c)^2.
\]
We know that $\lim_{n \to \infty} \langle(\Phi(a)-a - \lambda)\eta_n, (a-w)\eta_n\rangle = 0$.
Since $\operatorname{Re} (t-f_0(t)) \geq \sqrt{c^2+2c}$ for every $t \in [c, 1]$, we also know that
\[
\operatorname{Re} \lambda\langle\eta_n, (a-w)\eta_n\rangle = \frac{\lambda}{2}\langle\eta_n, ((a-w) + (a-w)^*)\eta_n\rangle \geq \lambda\sqrt{c^2+2c} > 0
\]
for every $n \in \mathbb{N}$. 
We have 
\[
\begin{split}
(1+c)^2 = \|\Phi(a)-w\|\|a-w\| &\geq \varlimsup_{n \to \infty} \operatorname{Re} \langle(\Phi(a)-w)\eta_n, (a-w)\eta_n\rangle \\
&= \varlimsup_{n \to \infty} \operatorname{Re}\lambda\langle\eta_n, (a-w)\eta_n\rangle + (1+c)^2 > (1+c)^2,
\end{split}
\] 
a contradiction.
Therefore we obtain $\Phi(a) \leq a$ and similarly $a \leq \Phi(a)$.\smallskip

Lastly we show $\Phi(x) = x$ for an arbitrary $x \in S(A) \cap A^{-1}$. 
The polar decomposition $x = u_0|x|$ satisfies $u_0 \in \U(A)$ and $|x| \in S(A) \cap A^{-1} \cap A_+$. 
Consider the surjective isometry $\Psi : S(A) \to S(A)$ which is defined by $\Psi(y) := u_0^{-1}\Phi(u_0y)$, $y \in S(A)$. 
Then the first part of this proof shows $\Psi(u) = u$ for every $u \in \U(A)$. 
The second part of this proof shows $|x| = \Psi(|x|) = u_0^{-1}\Phi(u_0|x|)$, hence 
$\Phi(x) = \Phi(u_0|x|) = u_0|x| = x$.  
\end{proof}

\section{Tingley's problem between preduals of von Neumann algebras}\label{predual}
In this section, we present an affirmative answer to Tingley's problem when the two spaces are preduals of von Neumann algebras. 
Our theorem extends the result of Fern\'andez-Polo, Garc\'es, Peralta and Villanueva \cite{FGPV}, in which Tingley's problem for the spaces of trace class operators on complex Hilbert spaces is solved affirmatively.

Let $M$ be a von Neumann algebra. By $(f)$ of Theorem \ref{thm-AP2} we know that for every norm-closed proper face $\F \subset \B_{M_*}$ there exists a unique nonzero partial isometry $v \in M$ such that $\F = \{v\}_{\prime}$.\smallskip

Recall that for a metric space $(X, d)$ and nonempty subsets $X_1, X_2 \subset X$, the \emph{Hausdorff distance} between $X_1$ and $X_2$ is defined by 
\[d_H(X_1, X_2) := \max \{ \sup_{x \in X_1} \inf_{y \in X_2} d(x, y), \sup_{y \in X_2}\inf_{x \in X_1} d(x, y)\}.\]
Endow the space of nonzero partial isometries in $M$ with the distance $\delta_H(v, w) := d_H(\{v\}_{\prime}, \{w\}_{\prime})$. 
(It is easy to show that $\delta_H$ actually satisfies the axioms of distance.)

\begin{lemma}\label{lem-predual1}
Let $M \subset \B(\H)$ be a von Neumann algebra. 
\begin{enumerate}[$(a)$]
\item Let $w_1, w_2 \in M$ be nonzero partial isometries with $w_1^*w_1 = w_2^*w_2$ or $w_1w_1^* = w_2w_2^*$. Then $\|w_1 - w_2\| \geq \delta_H(w_1, w_2)$.
\item $\|u-v\| = \delta_H(u, v)$ for every $u \in \U(M)$ and every $v \in \E(M)$.  
\end{enumerate}
\end{lemma}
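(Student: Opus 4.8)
The plan is to compute, or at least bound, the Hausdorff distance $\delta_H(v,w) = d_H(\{v\}_\prime, \{w\}_\prime)$ between maximal faces of $\mathcal{B}_{M_*}$ by passing through the dual description: a functional $f \in \{v\}_\prime$ is precisely a norm-one normal functional attaining its norm at $v$, which by polar decomposition means $f = v^* \cdot f$ in the obvious sense, i.e.\ $f(x) = f(v v^* x v^* v)$-type identities hold and $|f| (v^* v) = 1$. Concretely I would use that $\{v\}_\prime = \{f \in \mathcal{B}_{M_*} : f = f(v^* \,\cdot\,)\text{-supported below } v\}$; the cleanest model is $f \mapsto$ the normal state $f(v \, \cdot\,)$, identifying $\{v\}_\prime$ with the normal state space of $v^* v M v^* v$ (equivalently, with a translate thereof). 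Under this identification the metric on $\{v\}_\prime$ is the predual norm, and two such faces sit inside $M_*$ so their Hausdorff distance is measured by that same norm.

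For part $(a)$, suppose first $w_1^* w_1 = w_2^* w_2 =: p$. Then $w_2 = w_2 w_1^* w_1$ and $w_1 w_2^* $ is a partial isometry; set $u_0 := w_1 w_2^* + (1 - w_1 w_1^*)(1 - w_2 w_2^*)$-type completion inside $w_1 w_1^* \vee w_2 w_2^*$. The key observation is that the map $f \mapsto f(u_0 \,\cdot\,)$ (or the appropriate adjoint action) carries $\{w_2\}_\prime$ onto $\{w_1\}_\prime$, and for $f \in \{w_2\}_\prime$ one estimates $\| f - f(u_0\,\cdot\,) \| \le \| w_1 - w_2 \|$ because $w_1 - w_2$ acts on the common source projection $p$ and $\|f(x) - f(u_0 x)\| = |f((1 - u_0)x)| \le \|(1-u_0)^* \text{-part}\| \le \| w_1 - w_2\|$ after using $f = f(w_2 w_2^* \,\cdot\,)$. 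Taking sup over $f$ in each face and symmetrizing gives $\delta_H(w_1, w_2) \le \|w_1 - w_2\|$. The case $w_1 w_1^* = w_2 w_2^*$ is the adjoint/left-module version and follows by the symmetric argument (or by applying $(a)$ to $w_1^*, w_2^*$ after noting $\{w\}_\prime$ and $\{w^*\}_\prime$ are related by $f \mapsto f^*$ or by transposition, which is isometric on $M_*$).

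For part $(b)$, the inequality $\delta_H(u,v) \le \|u - v\|$ is immediate from $(a)$ once we write $u = u \cdot 1$ and $v$ with $v^* v \le 1 = u^* u$: actually here the source projections differ, so instead I would argue directly that $\{v\}_\prime \subseteq \{u\}_\prime$ cannot happen in general, and instead show both inclusions of "$\varepsilon$-neighbourhoods." The reverse inequality $\delta_H(u,v) \ge \|u - v\|$ is where I expect the real work: since $u \in \U(M)$, the face $\{u\}_\prime = \{ f \in M_*^+ \text{ twisted by } u\}$ consists of all $f$ with $f(u) = 1$, and its extreme points are the normal pure-ish states twisted by $u$; pick a normal state $\varphi$ with $\| (u - v)^* \varphi \|$ or $\varphi(\text{something})$ nearly realizing $\|u - v\| = \|(u - v)^*(u-v)\|^{1/2}$ evaluated via a near-eigenvector, then the functional $f = \varphi(u \,\cdot\,) \in \{u\}_\prime$ has $\inf_{g \in \{v\}_\prime} \|f - g\|$ close to $\|u - v\|$, because any $g \in \{v\}_\prime$ satisfies $g = g(v v^* \,\cdot\,)$ and hence $\|f - g\| \ge |f(x) - g(x)|$ for a suitable contraction $x$ detecting the difference between $u$ and $v$ on the vector implementing $\varphi$. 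The main obstacle is choosing this $x$ and the near-eigenvector uniformly: one needs $\|u - v\| = \|u^*u - u^*v\| = \|1 - u^* v\|$ and then a unit vector $\xi$ with $\|(1 - u^*v)\xi\|$ near $\|1 - u^*v\|$, from which $\langle u^* v \xi, \xi\rangle$ is bounded away from $1$; feeding $\xi$ into the vector state produces the required $f \in \{u\}_\prime$ whose distance to the whole of $\{v\}_\prime$ is $\ge \|u-v\| - \varepsilon$, while $(a)$-style reasoning (with $u$ as common left or right unit where applicable) gives the matching upper bound. Combining the two inequalities yields the equality.
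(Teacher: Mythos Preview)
Your treatment of $(a)$ is essentially the paper's argument: given $w_1^*w_1=w_2^*w_2$, the paper sends $\varphi\in\{w_1\}_\prime$ to $\psi(x):=\varphi(w_1w_2^*x)\in\{w_2\}_\prime$ and estimates $\|\varphi-\psi\|=\|\varphi((w_1w_1^*-w_1w_2^*)\,\cdot\,)\|\le\|w_1-w_2\|$. Your ``$u_0$-completion'' is unnecessary clutter, but the core map is the same.

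Part $(b)$ has a genuine gap. You never separate the cases $v\in\U(M)$ and $v\in\E(M)\setminus\U(M)$, and your near-eigenvector sketch only addresses the former. When $v$ is not unitary (say $vv^*\ne 1$), one has $\|u-v\|=2$, so the content of the lemma is $\delta_H(u,v)\ge 2$. Your proposed argument---pick $\xi$ with $\|(1-u^*v)\xi\|$ near $\|1-u^*v\|$ and feed it into a vector state---does not produce this: $u^*v$ is now a proper isometry (or coisometry), there is no spectral value $\lambda_0$ on the circle with $|1-\lambda_0|=2$, and nothing in your sketch forces the resulting functional to be at distance $\ge 2-\varepsilon$ from the entire face $\{v\}_\prime$ (or vice versa). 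The paper handles this case by a different, concrete construction: take a unit vector $\xi\in(1-vv^*)\H$, note that $\{v^n\xi\}_{n\ge 1}$ is orthonormal (since $v^*v(1-vv^*)=1-vv^*$), set $\eta_n:=\sum_{k=1}^n(-1)^kv^k\xi$ and $\varphi'_n:=n^{-1}\omega_{\eta_n,v\eta_n}\in\{v\}_\prime$; then for every $\psi\in\{1\}_\prime$ one has $\|\varphi'_n-\psi\|\ge|\varphi'_n(1)-1|=(2n-1)/n\to 2$. This wandering-sequence idea is the missing ingredient.

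A smaller point: even in the unitary case your lower-bound computation is not quite right. From $\|(1-u^*v)\xi\|$ near $\|1-u^*v\|$ you only control $\mathrm{Re}\langle u^*v\xi,\xi\rangle$, not $|1-\langle u^*v\xi,\xi\rangle|$, which is what you need when testing against $1$. The paper instead takes $\xi_n$ to be approximate eigenvectors for the eigenvalue $\lambda_0\in\sigma(v)$ maximizing $|1-\lambda|$, so that $\langle\xi_n,v\xi_n\rangle\to\overline{\lambda_0}$ and $|1-\varphi_n(1)|\to|1-\lambda_0|=\|1-v\|$ exactly. Finally, your worry about the upper bound when $v\notin\U(M)$ is unfounded: $\delta_H\le 2$ is trivial, and $\|u-v\|=2$ in that case, so there is nothing to prove.
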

\begin{proof}
$(a)$ Suppose $w_1^*w_1 = w_2^*w_2$. Let $\varphi \in \{w_1\}_{\prime}$. 
Then defining $\psi (x) := \varphi (w_1w_2^*x)$ $(x \in M)$, 
we have $\psi \in \{ w_2\}_{\prime}$ and 
\[
\|\varphi - \psi\| = \|\varphi((w_1w_1^*  - w_1w_2^*) \, \cdot\, )\| \leq \|w_1w_1^* - w_1w_2^*\| \leq \|w_1 - w_2\|.
\] 
Therefore we obtain 
$\displaystyle \sup_{\varphi \in \{w_1\}_{\prime}} \inf_{\psi \in \{w_2\}_{\prime}} \|\varphi - \psi\| \leq \|w_1 - w_2\|$. 
Similarly we can see $\displaystyle \sup_{\psi \in \{w_2\}_{\prime}} \inf_{\varphi \in \{w_1\}_{\prime}} \|\varphi - \psi\| \leq \|w_1 - w_2\|$, and therefore $\delta_H(w_1, w_2) \leq \|w_1 - w_2\|$. 
A similar discussion can be applied in the case $w_1w_1^* = w_2w_2^*$, too.\smallskip

\noindent
$(b)$ Suppose first that $u, v \in \U(M)$. 
The inequality $\delta_H(u, v) \leq \|u - v\|$ follows from $(a)$.
We may assume $u = 1$. In that case, we have
$\|u-v\| = \|1-v\| = \sup_{\lambda \in \sigma(v)} |1-\lambda|$. Take $\lambda_0 \in \sigma(v)$ which attains this supremum.
Since $\lambda_0 \in \sigma(v)$, 
there exist $\xi_n \in \H$ with $\|\xi_n\| = 1$, $n \in \mathbb{N}$ such that
$\|v\xi_n - \lambda_0 \xi_n\| \to 0$ $(n \to \infty)$.
Define $\varphi_n := \omega_{\xi_n, v\xi_n} = \langle\,\cdot\ \xi_n, v\xi_n\rangle \, (\in \{v\}_{\prime})$. 
Then for every $\psi \in \{ 1 \}_{\prime}$ we have 
\[
\|\psi-\varphi_n\| \geq |\psi(1)-\varphi_n(1)| = |1 - \langle\xi_n, v\xi_n\rangle|
\to |1 - \overline{\lambda_0}| = \|1 - v\| \quad (n \to \infty). 
\]
Therefore we obtain $\delta_H(1, v) \geq \|1-v\|$.
The proof when $u, v \in \U(M)$ is completed.\smallskip

Let us assume next $v \notin \U(M)$. We may assume $u = 1$ and  $vv^* \neq 1$. In that case, it follows that $\|u-v\| = \|1-v\| = 2$.
Take a unit vector $\xi \in (1- vv^*)\H$.
Since $v^*v(1-vv^*) = 1-vv^*$, the system $\{v^n\xi\}_{n \in \mathbb{N}}$ is orthonormal.
Define $\eta_n := \sum_{k=1}^n (-1)^kv^k\xi$ and 
$\varphi'_n := n^{-1} \omega_{\eta_n, v\eta_n}$ $(\in \{v\}_{\prime})$ for $n \in \mathbb{N}$.
Then for every $\psi \in \{1\}_{\prime}$ we have
\[
\|\varphi'_n - \psi\| \geq |\varphi'_n(1) - \psi(1)| = \left| -\frac{n-1}{n} -1\right| = \frac{2n-1}{n} \to 2 \quad (n \to \infty).
\]
It follows that $\delta_H(1, v) \geq 2$. The inequality $\delta_H(1, v) \leq 2$ is trivial.
\end{proof}
Note that using the same discussion as in $(b)$, we also gain $\|w_1 - w_2\| = \delta_H(w_1, w_2)$ 
for every pair of nonzero partial isometries $w_1, w_2 \in M$ with $w_1^*w_1 = w_2^* w_2$ and $w_1w_1^* = w_2w_2^*$.

The author does not know whether $\|v-w\| = \delta_H(v, w)$ holds for every pair $v, w \in \E(M)$, but the following lemma which is similar to Lemma \ref{lem-C*} holds.

\begin{lemma}\label{lem-predual2}
Let $M$ be a von Neumann algebra and $x$ be in $\E(M)$. Then $x$ is in $\U(M)$ if and only if the set
$\widehat{\A}_x := \{ y \in \E(M) \mid  \delta_H(x, \pm y) \leq \sqrt{2}\}$
has an isolated point with respect to the metric $\delta_H$.
\end{lemma}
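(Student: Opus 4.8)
The plan is to follow the pattern of the proof of Lemma~\ref{lem-C*}, with the operator norm replaced by $\delta_H$ and with Lemma~\ref{lem-predual1} used to pass between the two. Two preliminary remarks will be used throughout. First, $\delta_H$ is invariant under left and right multiplication by unitaries of $M$: for $u\in\U(M)$, the pre-adjoint of $a\mapsto ua$ (resp.\ $a\mapsto au$) is a surjective linear isometry of $M_*$ carrying $\{v\}_{\prime}$ onto $\{u^{*}v\}_{\prime}$ (resp.\ $\{vu^{*}\}_{\prime}$), while left or right multiplication by $u$ is a surjective linear isometry of $M$, hence maps $\E(M)$ onto $\E(M)$; thus $\delta_H(uv,uw)=\delta_H(vu,wu)=\delta_H(v,w)$. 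Secondly, $\varphi\mapsto\varphi^{*}$ with $\varphi^{*}(a):=\overline{\varphi(a^{*})}$ is a conjugate-linear surjective isometry of $M_*$ carrying $\{v\}_{\prime}$ onto $\{v^{*}\}_{\prime}$, so $\delta_H(v^{*},w^{*})=\delta_H(v,w)$ and $\widehat{\A}_{x^{*}}=\{y^{*}\mid y\in\widehat{\A}_x\}$.

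For the ``only if'' direction one would argue as follows. Assume $x\in\U(M)$. The map $y\mapsto x^{*}y$ carries $\widehat{\A}_x$ bijectively and $\delta_H$-isometrically onto $\widehat{\A}_1$ (using $xx^{*}=1$ together with left unitary invariance), so it suffices to treat $x=1$. By Lemma~\ref{lem-predual1}$(b)$, $\delta_H(1,y)=\|1-y\|$ and $\delta_H(1,-y)=\|1+y\|$ for every $y\in\E(M)$; since $(1-y)^{*}(1-y)+(1+y)^{*}(1+y)=2(1+y^{*}y)$ has norm $4$, the condition $\delta_H(1,\pm y)\le\sqrt2$ is equivalent to $\|1-y\|=\|1+y\|=\sqrt2$. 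Hence $\widehat{\A}_1$ coincides with the set $\A_1$ of Lemma~\ref{lem-C*} (taking $A=M$ there), which that proof identifies with $\{i(1-2q)\mid q\in\P(M)\}$. On this set of unitaries $\delta_H$ agrees with the operator norm by Lemma~\ref{lem-predual1}$(b)$, so $\delta_H(i(1-2q),i(1-2q'))=2\|q-q'\|$; in particular $\delta_H(i,i(1-2q))=2$ whenever $q\ne0$, and $\pm i$ are isolated points of $\widehat{\A}_1$.

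For the ``if'' direction, I would show that if $x\notin\U(M)$ then no $y\in\widehat{\A}_x$ is isolated. By $*$-invariance we may assume $xx^{*}\ne1$; fix $y\in\widehat{\A}_x$. If $(1-xx^{*})y\ne0$, set $u_c:=xx^{*}+c(1-xx^{*})\in\U(M)$ for $c\in\mathbb{T}$; then $u_c^{*}x=x$ (as $(1-xx^{*})x=0$), so by left unitary invariance $\delta_H(x,\pm u_cy)=\delta_H(x,\pm y)\le\sqrt2$, i.e.\ $u_cy\in\widehat{\A}_x$, while $(u_cy)^{*}(u_cy)=y^{*}y$ lets Lemma~\ref{lem-predual1}$(a)$ give $\delta_H(y,u_cy)\le\|y-u_cy\|=|c-1|\,\|(1-xx^{*})y\|\to0$ as $c\to1$; since $u_cy\ne y$ for $c\ne1$, $y$ is not isolated. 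The case $(1-x^{*}x)y^{*}\ne0$ is symmetric, perturbing $y$ on the right by $x^{*}x+c(1-x^{*}x)$. There remains the case $yy^{*}\le xx^{*}$ and $y^{*}y\le x^{*}x$. Here I would first prove the key claim that in fact $yy^{*}=xx^{*}$ and $y^{*}y=x^{*}x$. Granting it, $x$ and $y$ are maximal partial isometries with equal left and equal right supports, so by the remark following Lemma~\ref{lem-predual1} we recover $\|x-y\|=\delta_H(x,y)\le\sqrt2$ and $\|x+y\|=\delta_H(x,-y)\le\sqrt2$, hence $\|x\pm y\|=\sqrt2$; this places us exactly in the situation treated in the first part of the proof of Lemma~\ref{lem-C*} (run inside the corner $x^{*}xMx^{*}x$), which produces a projection $q\le xx^{*}$ with $y=i(1-2q)x$. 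The final perturbation of $q$ then transcribes: if $q$ is not isolated in $\P(xx^{*}Mxx^{*})$, projections $q_n\to q$ ($q_n\ne q$) in that lattice yield $i(1-2q_n)x\in\widehat{\A}_x$ converging to $y$ in $\delta_H$, both facts following from the remark after Lemma~\ref{lem-predual1} since $i(1-2q_n)x$ shares the supports of $x$; and if $q$ is isolated, a Russo--Dye argument makes $q$ central in $xx^{*}Mxx^{*}$, and the family $y''_{\theta}$ of Lemma~\ref{lem-C*}, which one checks lies in $\widehat{\A}_x$ and converges to $y$, finishes the job.

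The step I expect to be the main obstacle is the key claim, since ``$\|x\pm y\|=2$'' from Lemma~\ref{lem-C*} has no literal counterpart. Reducing to factors via the central decomposition of $M$, an element of $\E(M)$ is either an isometry or a coisometry; since $x\notin\U(M)$, the inclusions $yy^{*}\le xx^{*}$, $y^{*}y\le x^{*}x$ then force $x$ and $y$ to be both isometries with $y^{*}y=x^{*}x\,(=1)$ or both coisometries with $yy^{*}=xx^{*}\,(=1)$, and by $*$-symmetry we may assume the former; so we must prove $yy^{*}=xx^{*}$. Suppose not, say $yy^{*}\lneq xx^{*}$. Using the standard identification of $\{y\}_{\prime}$ with the normal state space of $y^{*}yMy^{*}y$ via $\psi\mapsto\psi(y\,\cdot\,)$, and the fact that $\varphi(x)=1$ for every $\varphi\in\{x\}_{\prime}$, one finds that $\{\psi(x)\mid\psi\in\{y\}_{\prime}\}$ is the numerical range $W$, over normal states of that corner, of $z:=y^{*}xy^{*}y$; hence $\delta_H(x,y)\ge\sup\{|w-1|\mid w\in\overline W\}$. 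Now $zz^{*}=y^{*}(xy^{*}yx^{*})y=y^{*}y$ (using $y^{*}y=x^{*}x$ and $yy^{*}\le xx^{*}$), so $z$ is a coisometry in the corner, while $z^{*}z=x^{*}(yy^{*})x\lneq x^{*}x=y^{*}y$; thus $z$ is a non-unitary coisometry in a von Neumann algebra, its spectrum is therefore all of $\overline{\mathbb{D}}$, and $\overline W=\overline{\mathbb{D}}$. Hence $\delta_H(x,y)\ge\sup_{w\in\overline{\mathbb{D}}}|w-1|=2>\sqrt2$, contradicting $y\in\widehat{\A}_x$. Carrying out these corner computations (and the direct-integral bookkeeping behind the reduction to factors) carefully is the only part that genuinely departs from Lemma~\ref{lem-C*}, and is where I expect the work to lie; the rest is a mechanical translation of the norm estimates of Lemma~\ref{lem-C*} into $\delta_H$-estimates.
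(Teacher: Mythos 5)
Your proof is correct and follows the same blueprint as the paper's, which simply declares the argument ``parallel to Lemma~\ref{lem-C*}'' and notes that the perturbations $y'_c$, $y''_{\theta}$, $y'''_n$ share the initial space of $y$, so that Lemma~\ref{lem-predual1} converts norm-convergence into $\delta_H$-convergence. Where you genuinely add something is exactly at the step you single out. In the remaining case $yy^*\le xx^*$, $y^*y\le x^*x$, Lemma~\ref{lem-C*} derives $xx^*=yy^*$ and $x^*x=y^*y$ from the hypothesis $\|x\pm y\|=\sqrt2\ne 2$; in the $\delta_H$ setting one only knows $\delta_H(x,\pm y)\le\sqrt2$, and the identity $\|x-y\|=\delta_H(x,y)$ from the remark after Lemma~\ref{lem-predual1} presupposes the very support equalities one is trying to prove, so the argument does not transfer mechanically. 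Your replacement --- showing directly that $\delta_H(x,y)\ge 2$ when the supports differ strictly, because $\{\psi(x)\mid\psi\in\{y\}_{\prime}\}$ is the normal-state numerical range of the non-unitary coisometry $y^*x$, whose closure is the whole closed disc --- is a clean generalization of the $\varphi'_n$ construction in the second half of the proof of Lemma~\ref{lem-predual1}$(b)$ (which exhibits $-1$ in the asymptotic numerical range of a non-unitary isometry when $x=1$), and it supplies the one step the paper dismisses with ``it is not difficult to see.'' Your observation that $\delta_H(1,\pm y)\le\sqrt2$ already forces both norms to equal $\sqrt2$, via $\|1-y\|^2+\|1+y\|^2\ge\|2(1+y^*y)\|=4$, likewise tidies the discrepancy between the inequality in the definition of $\widehat{\A}_x$ and the equality in that of $\A_x$. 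The only loose ends are ones you flag honestly: the ``reduction to factors'' is best run with the single central projection splitting $y$ into an isometry part and a coisometry part (exactly as in Lemma~\ref{lem-C*}), applying your numerical-range estimate on each summand with a functional supported there; and the verification that $y''_{\theta}$ lies in $\widehat{\A}_x$ and shares supports with $y$ is a computation at the same level of detail the paper itself omits.
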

\begin{proof}
The proof is parallel to that of Lemma \ref{lem-C*}. 

Suppose $x$ is in $\U(M)$. The preceding lemma shows that $\delta_H(x, y) = \|x - y\|$ for every $y \in \E(M)$. 
By the same discussion as in the proof of Lemma \ref{lem-C*}, we obtain $\widehat{\A}_x = i(2\P(M) - 1)x \, (\subset \U(M))$, which have isolated points $\pm ix$.

Next suppose $x \notin \U(M)$ and $y \in \widehat{\A}_x$. 
We again use the argument as in the proof of the Lemma \ref{lem-C*}. Note that the operators $y'_c, y''_{\theta}, y'''_n$ have the same initial spaces as $y$. Hence it is not difficult to see that the preceding lemma shows $y$ is not isolated in $\widehat{\A}_x$ with respect to the metric $\delta_H$.
\end{proof}

We state the main theorem of this section:
\begin{theorem}\label{thm-predual}
Let $M$ and $N$ be von Neumann algebras and $T : S(M_*) \to S(N_*)$ be a surjective isometry.
Then there exists a unique real linear surjective isometry $\widetilde{T} : M_* \to N_*$ which extends $T$.
\end{theorem}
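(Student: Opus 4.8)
The plan is to follow the three-step scheme described in the introduction: read off $\U(M)$ from the facial structure of $S(M_*)$, transport it through $T$, and then invoke Hatori--Moln\'ar.

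\emph{From the sphere isometry to a correspondence of partial isometries, and detection of the unitary group.} By Theorem~\ref{thm-AP2}$(f)$ the norm-closed proper faces of $\B_{M_*}$ are precisely the sets $\{v\}_{\prime}$ with $v$ a nonzero partial isometry of $M$, and likewise for $N$. Proposition~\ref{prop-T} says that $T$ maps norm-closed proper faces onto norm-closed proper faces, so there is a bijection $\Theta$ from the nonzero partial isometries of $M$ onto those of $N$ determined by $T(\{v\}_{\prime})=\{\Theta(v)\}_{\prime}$. Since $T$ is a global isometry of the spheres carrying $\{v\}_{\prime}$ onto $\{\Theta(v)\}_{\prime}$, it preserves the Hausdorff metric $\delta_H$, it preserves inclusion of faces, hence the partial order of partial isometries, and by Proposition~\ref{prop-face}$(b)$ it satisfies $\Theta(-v)=-\Theta(v)$. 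Feeding these properties into Lemma~\ref{lem-predual2}: $\Theta$ carries $\widehat{\A}_x$ onto $\widehat{\A}_{\Theta(x)}$ and preserves the property of having a $\delta_H$-isolated point, so $x\in\U(M)$ if and only if $\Theta(x)\in\U(N)$; that is, $\Theta(\U(M))=\U(N)$. On the unitary groups $\delta_H$ coincides with the operator norm by Lemma~\ref{lem-predual1}$(b)$, so $\Theta|_{\U(M)}\colon\U(M)\to\U(N)$ is a surjective isometry for the norm.

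\emph{The linear candidate.} By Theorem~\ref{thm-HM} there is a real linear surjective isometry $R\colon M\to N$, of the form $R(x)=\Theta(1)\bigl(pJ(x)+(1-p)J(x)^{*}\bigr)$ for a Jordan $^{*}$-isomorphism $J\colon M\to N$ and a central projection $p\in N$, with $R(e^{ia})=\Theta(e^{ia})$ for every $a\in M_{sa}$; since every unitary of a von Neumann algebra is an exponential $e^{ia}$, $R$ coincides with $\Theta$ on all of $\U(M)$. Being a composition of a Jordan $^{*}$-isomorphism, the map $x\mapsto px+(1-p)x^{*}$, and a left multiplication by a unitary, $R$ is a weak$^{*}$-homeomorphism, hence admits a real linear surjective isometric preadjoint $R_{*}\colon N_{*}\to M_{*}$. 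Put $\widetilde T:=(R_{*})^{-1}\colon M_{*}\to N_{*}$, i.e.\ $\widetilde T(\varphi)=\varphi\circ R^{-1}$; this is a real linear surjective isometry. It is moreover the only possible linear extension: a real linear surjective isometry extending $T$ must induce $\Theta|_{\U(M)}$ on unitaries through the face correspondence of the first step, and by the Russo--Dye theorem such a map is determined by its restriction to $\U(M)$, so it must be $\widetilde T$.

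\emph{The candidate extends $T$.} It remains to verify $\widetilde T(\varphi)=T(\varphi)$ for every $\varphi\in S(M_{*})$; equivalently, the surjective isometry $\Phi:=\widetilde T^{-1}\circ T$ of $S(M_{*})$ is the identity. Directly from the construction, $\widetilde T(\{u\}_{\prime})=\{\Theta(u)\}_{\prime}=T(\{u\}_{\prime})$ for $u\in\U(M)$, and on these faces one can push this to a pointwise identity: this is transparent on the self-adjoint sphere, since every $\varphi\in S(M_{*sa})$ attains its norm at the self-adjoint unitary $1-2s(\varphi_{-})$, where $\varphi=\varphi_{+}-\varphi_{-}$ is the orthogonal decomposition into normal positive parts, so $S(M_{*sa})$ is covered by unitary faces. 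One then propagates to the whole sphere: each $\varphi\in S(M_{*})$ lies in the face $\{v\}_{\prime}$ generated by the partial isometry $v$ of its polar decomposition, $\{v\}_{\prime}$ is the intersection of the maximal faces $\{u\}_{\prime}$ ($u\in\E(M)$, $u\ge v$) that contain it, and --- using that both $\Theta$ and the correspondence induced by $\widetilde T$ are order- and $\delta_H$-preserving, $\pm$-equivariant and equal on $\U(M)$, together with the facts that a partial isometry majorized by a unitary equals that unitary times a subprojection and that a projection is determined by the unitaries above it --- these two correspondences agree on all of $\E(M)$, so $\widetilde T$ and $T$ induce the same $\Theta$ everywhere and $\Phi$ fixes every norm-closed proper face of $\B_{M_*}$ setwise. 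Together with pointwise agreement on a norm-dense family of faces --- obtained by approximating $\varphi=v|\varphi|$ by $\varphi_n$ supported on $e_n\uparrow v^{*}v$ chosen, via comparison of projections, so that $ve_n$ extends to a unitary of $M$ --- and continuity of $\Phi$, one concludes $\Phi=\operatorname{id}$, which finishes the proof.

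\emph{Main obstacle.} The first three steps are little more than assembling the lemmas already established; the real difficulty is the last step, i.e.\ upgrading ``$\widetilde T$ and $T$ agree on the unitary faces'' to ``they agree on all of $S(M_{*})$''. This rests on two delicate points --- the identification of the two induced correspondences on \emph{all} nonzero partial isometries (not merely unitaries), which uses the order and $\delta_H$ rigidity together with comparison theory, and the norm-density of $\bigcup_{u\in\U(M)}\{u\}_{\prime}$ in $S(M_{*})$, which again requires von Neumann algebraic structure theory (polar decomposition of normal functionals and comparison of projections). That is where the genuine work lies.
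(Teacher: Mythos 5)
Your first three steps (detecting $\U(M)$ via Lemma \ref{lem-predual2} and the Hausdorff metric, applying Hatori--Moln\'ar, building the candidate $\widetilde T$ as the inverse preadjoint, and showing that $\Phi:=\widetilde T^{-1}\circ T$ maps every norm-closed proper face $\{v\}_{\prime}$ of $\B_{M_*}$ onto itself --- first for unitaries, then for partial isometries with a unitary extension via $\{v\}_{\prime}=\{u\}_{\prime}\cap\{2v-u\}_{\prime}$, then for the rest via comparison theory) coincide with the paper's argument. But the final step contains a genuine gap: you assert that setwise invariance of the unitary faces ``can be pushed to a pointwise identity,'' and later invoke ``pointwise agreement on a norm-dense family of faces,'' without ever proving pointwise agreement anywhere. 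Knowing that $\Phi$ fixes every face $\{v\}_{\prime}$ as a set tells you nothing about individual points: the face $\{1\}_{\prime}$ is the whole normal state space, and a surjective isometry of the normal state space permuting all the subfaces $\{p\}_{\prime}$ correctly is not obviously the identity. In a type II$_1$ factor there are no minimal projections, so no face reduces to a single point, and the extreme points of $\B_{M_*}$ need not be dense in $S(M_*)$ (they may not even exist, e.g.\ for $L^1[0,1]$); hence neither minimality nor density of small faces rescues the argument. Your aside about $S(M_{*sa})$ being covered by unitary faces is true but irrelevant for the same reason.

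This missing step is exactly where the paper does its real work. It reduces to showing $\Phi(\varphi)=\varphi$ for a faithful normal state $\varphi$, sets $\varphi_0:=\Phi(\varphi)$, and, assuming $\varphi\neq\varphi_0$, extracts from the Jordan decomposition of $\varphi-\varphi_0$ a projection $p$ with $\varphi(p)<\varphi_0(p)$ and $\varphi(p\cdot p^{\perp})=\varphi_0(p\cdot p^{\perp})$. It then introduces the set $\mathcal{S}_0$ of normal states $\psi$ with $\psi(p)=\varphi(p)$ and $\psi(p\cdot p^{\perp})=0=\psi(p^{\perp}\cdot p)$, proves (Lemma \ref{lem-predual4}, resting on the norm inequality of Lemma \ref{lem-predual3}) that $\mathcal{S}_0$ is characterized purely metrically by distances to $\{p\}_{\prime}$ and $\{p^{\perp}\}_{\prime}$ --- hence is $\Phi$-invariant --- and finally computes that $\operatorname{dist}(\varphi,\mathcal{S}_0)=2\|\varphi(p\cdot p^{\perp})\|$ while $\operatorname{dist}(\varphi_0,\mathcal{S}_0)$ is strictly larger, a contradiction. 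Some argument of this kind, converting setwise face invariance into pointwise rigidity on the normal state space, is indispensable and is absent from your proposal; your ``Main obstacle'' paragraph correctly senses that the difficulty lies at the end, but misidentifies it as the matching of partial-isometry correspondences and a density claim rather than this pointwise rigidity.
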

We start proving.

Since $T$ gives a bijection between the classes of maximal convex sets in unit spheres, a bijection $T_1 : \E(M) \to \E(N)$ is determined by
$T(\{v\}_{\prime}) = \{T_1(v)\}_{\prime}$, $v \in \E(M)$ (see Proposition \ref{prop-T}). 
We also have $T_1(\widehat{\A}_v) = \widehat{\A}_{T_1(v)}$ for any $v\in \E(M)$. 
By the preceding lemma and $(a)$ of Proposition \ref{prop-face}, 
$T_1$ restricts to a bijection between unitary groups. 
Moreover, by $(b)$ of Lemma \ref{lem-predual1}, this is a surjective isometry between unitary groups.   
By the theorem of Hatori and Moln\'ar, 
there exists a unique real linear surjective isometry $\widetilde{T_1} : M \to N$ such that 
$T_1(u) = \widetilde{T_1}(u)$ for all $u \in e^{M_{sa}} = \U(M)$. 
Note that $\widetilde{T_1}$ and $\widetilde{T_1}^{-1}$ are $\sigma$-weakly continuous since they can be expressed by Jordan $^*$-isomorphisms.

Now we can construct a real linear surjective isometry $T_2 : N^* \to M^*$ which is canonically determined by $\widetilde{T_1}$ as the following:
\[
(T_2\varphi)(x) = (\operatorname{Re} \varphi) (\widetilde{T_1} (x)) - i(\operatorname{Re} \varphi)(\widetilde{T_1} (ix)), \quad \varphi \in N^*, x \in M.
\]
By the $\sigma$-weak continuity of $\widetilde{T_1}$ and $\widetilde{T_1}^{-1}$, $T_2$ restricts to a real linear surjective isometry from $N_*$ onto $M_*$.
We would like to show that ${T_2}^{-1} : M_* \to N_*$ is the extension we wanted.
In order to show this, it suffices to show that the surjective isometry $\Phi := T_2 \circ T : S(M_*) \to S(M_*)$ is equal to the identity mapping on $S(M_*)$. 
We know that $\Phi(\{u\}_{\prime}) = \{u\}_{\prime}$ for every $u \in \U(M)$.\smallskip

Let $v \in M$ be a nonzero partial isometry which has a unitary extension $u$. 
Then $\{v\}_{\prime} = \{u\}_{\prime} \cap \{2v-u\}_{\prime}$, and since 
$u$ and $2v-u$ are unitaries we have $\Phi(\{v\}_{\prime}) = \{v\}_{\prime}$.

Let $v \in M$ be a nonzero partial isometry which does not admit a unitary extension. Then there exist nonzero sub-partial isometries $v_1, v_2 \in M$ of $v$ which have unitary extensions and satisfy $v = v_1 + v_2$. 
(Indeed, we can take $v_1$ and $v_2$ as follows. 
Decompose the projection $v^*v$ to the sum of a finite projection $p_1$ and a properly infinite projection $p_2$. 
Since $v$ does not admit a unitary extension, we have $p_2\neq 0$. 
Decompose $p_2$ into the sum of mutually Murray-von Neumann equivalent projections $p_{21}$ and $p_{22}$. 
Then $v_1= v(p_1+p_{21})$ and $v_2= vp_{22}$ satisfy the condition. 
See for example \cite[Chapter 6]{KR} for information about the comparison theory of projections.)
Since $\{v\}_{\prime}$ is the minimum norm-closed face in $\B_{M_*}$ which contains both $\{v_1\}_{\prime}$ and $\{v_2\}_{\prime}$, we obtain $\Phi(\{v\}_{\prime}) = \{v\}_{\prime}$.

Therefore, in order to show that $\Phi$ is an identity mapping, it suffices to show $\Phi(\varphi) = \varphi$ for every normal state $\varphi$ on $M$ (i.e.\ for every $\varphi \in \{1\}_{\prime} = S(M_*) \cap M_{*+}$). 
Restricting our attention to $((\operatorname{supp}\varphi )M(\operatorname{supp}\varphi))_{*}$ which can be identified canonically with a subspace of $M_*$, 
we may also assume $\varphi$ is faithful 
(i.e.\ $\varphi(a) \neq 0$ for an arbitrary nonzero positive element $a$ in $M$). 
We need some more preparations.

\begin{lemma}\label{lem-predual3}
Let $M$ be a von Neumann algebra, $\varphi \in M_{*}$ be a self-adjoint element and $p \in \P(M)$. 
Then 
\[
\|\varphi\| \geq \sqrt{(\varphi(p) - \varphi(p^{\perp}))^2 + 4\|\varphi(p\cdot p^{\perp})\|^2}.
\]
\end{lemma}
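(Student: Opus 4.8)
The plan is to estimate $\|\varphi\|$ from below by testing it against a suitably chosen contraction built from $p$ and $p^{\perp}$. Write $a := \varphi(p) - \varphi(p^{\perp}) \in \mathbb{R}$ (using self-adjointness of $\varphi$) and $b := \|\varphi(p\cdot p^{\perp})\|$. First I would fix $\varepsilon > 0$ and choose a partial isometry $w \in p^{\perp} M p$ with $\|w\| \le 1$ and $|\varphi(w^{*})| = |\varphi(p w^{*} p^{\perp})|$ close to $b$; more precisely, since $\varphi(p\cdot p^{\perp}) \in (pMp^{\perp})^{*}$ in the appropriate sense, pick a contraction $z \in p^{\perp}Mp$ with $\operatorname{Re}\varphi(z)$ within $\varepsilon$ of $b$, and then symmetrize by replacing $z$ with $(z+z^{*})/2 \le $ something—actually, cleaner: pick $z \in pMp^{\perp}$ a contraction with $\operatorname{Re}\varphi(z^{*}) \ge b - \varepsilon$. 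Then the element $h_{\theta} := \cos\theta\,(p - p^{\perp}) + \sin\theta\,(z + z^{*})$ is self-adjoint for real $\theta$, and because $z \in pMp^{\perp}$ the three "blocks" $p, p^{\perp}$ and $z+z^{*}$ interact only through a $2\times 2$ corner, so a short computation with the matrix $\begin{pmatrix}\cos\theta & \sin\theta\, z \\ \sin\theta\, z^{*} & -\cos\theta\end{pmatrix}$ acting on $pMp \oplus p^{\perp}Mp^{\perp}$ shows $\|h_{\theta}\| \le 1$ (the $2\times 2$ real matrix $\bigl(\begin{smallmatrix}\cos\theta & \sin\theta \\ \sin\theta & -\cos\theta\end{smallmatrix}\bigr)$ is orthogonal, hence norm $1$, and $z$ is a contraction).

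Next I would evaluate: $\varphi(h_{\theta}) = \cos\theta\,(\varphi(p) - \varphi(p^{\perp})) + \sin\theta\,(\varphi(z) + \varphi(z^{*})) = a\cos\theta + 2\operatorname{Re}\varphi(z^{*})\sin\theta \ge a\cos\theta + 2(b-\varepsilon)\sin\theta$ for an appropriate sign choice of $z$ (if $a < 0$ replace $\theta$ by $-\theta$ or $p\leftrightarrow p^{\perp}$; one can arrange the two coefficients to have the same sign). Optimizing over $\theta$, the maximum of $a\cos\theta + 2(b-\varepsilon)\sin\theta$ over $\theta \in \mathbb{R}$ equals $\sqrt{a^{2} + 4(b-\varepsilon)^{2}}$. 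Since $\|h_{\theta}\| \le 1$ gives $\|\varphi\| \ge |\varphi(h_{\theta})|$, we get $\|\varphi\| \ge \sqrt{a^{2} + 4(b-\varepsilon)^{2}}$, and letting $\varepsilon \to 0$ yields the claim $\|\varphi\| \ge \sqrt{(\varphi(p) - \varphi(p^{\perp}))^{2} + 4\|\varphi(p\cdot p^{\perp})\|^{2}}$.

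The main obstacle I anticipate is making rigorous the choice of the contraction $z \in pMp^{\perp}$ that nearly attains $\|\varphi(p\cdot p^{\perp})\|$ together with the control on the norm $\|h_{\theta}\|$. For the first point, $\|\varphi(p\cdot p^{\perp})\|$ is by definition $\sup\{|\varphi(pxp^{\perp})| : x \in M, \|x\| \le 1\}$, so contractions $z = pxp^{\perp}$ with $|\varphi(z)|$ arbitrarily close to this sup exist by definition; multiplying by a scalar of modulus one arranges $\varphi(z)$ real and positive. For the second point, one checks $\|h_{\theta}\| \le 1$ by squaring: $h_{\theta}^{2} = \cos^{2}\theta\,(p + p^{\perp}) + \sin^{2}\theta\,(zz^{*} + z^{*}z) + \cos\theta\sin\theta\,[(p-p^{\perp})(z+z^{*}) + (z+z^{*})(p-p^{\perp})]$; since $z = pzp^{\perp}$ one has $(p - p^{\perp})(z+z^{*}) + (z+z^{*})(p-p^{\perp}) = 0$ (the cross terms cancel because $pz = z = zp^{\perp}$ forces $(p-p^{\perp})z = z$ and $z(p-p^{\perp}) = -z$, and symmetrically for $z^{*}$), so $h_{\theta}^{2} = \cos^{2}\theta\,\mathbf{1}_{\mathrm{supp}} + \sin^{2}\theta\,(zz^{*}+z^{*}z) \le \cos^{2}\theta + \sin^{2}\theta = 1$ using $\|zz^{*}\|, \|z^{*}z\| \le 1$ and orthogonality of the ranges of $zz^{*}$ (under $p$) and $z^{*}z$ (under $p^{\perp}$). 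Hence $\|h_{\theta}\| \le 1$, completing the argument.
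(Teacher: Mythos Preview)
Your proof is correct and takes essentially the same route as the paper: evaluate $\varphi$ at the self-adjoint contraction $\cos\theta\,(p - p^{\perp}) + \sin\theta\,(z + z^{*})$ and maximize over $\theta$. The only difference is that the paper exploits normality of $\varphi$ to choose a partial isometry $v \in pMp^{\perp}$ with $\varphi(v) = \|\varphi(p\cdot p^{\perp})\|$ exactly (via the polar decomposition of the normal functional $\varphi(p\,\cdot\,p^{\perp})$), thereby avoiding your $\varepsilon$-approximation; note also that your sign discussion is unnecessary, since $\sup_{\theta}(a\cos\theta + c\sin\theta) = \sqrt{a^{2}+c^{2}}$ regardless of the signs of $a$ and $c$.
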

\begin{proof}
Take a partial isometry $v \in M$ such that $vv^* \leq p$, $v^*v \leq p^{\perp}$ and $\varphi(v) = \|\varphi(p\cdot p^{\perp})\|$.
Then for every $\theta \in \mathbb{R}$ we have 
$p\cos\theta - p^{\perp}\cos\theta + (v+v^*)\sin\theta \in \B_{M_{sa}}$.
Take the supremum of
\[
\varphi(p\cos\theta - p^{\perp}\cos\theta + (v+v^*)\sin\theta)
= (\varphi(p) - \varphi(p^{\perp}))\cos\theta + 2\|\varphi(p\cdot p^{\perp})\|\sin\theta
\]
with respect to $\theta \in \mathbb{R}$.
\end{proof}

\begin{lemma}\label{lem-predual4}
Let $M$ be a von Neumann algebra, $\varphi$ be a normal state on $M$ and $p$ be in $\P(M)$. 
Suppose $0 < \varphi(p) < 1$. Put $\lambda:=\varphi(p)$.
Then the following two conditions are equivalent:
\begin{enumerate}[$(a)$]
\item There exist $\psi_1 \in \{p\}_{\prime}$ and $\psi_2 \in \{p^{\perp}\}_{\prime}$ such that $\|\varphi - \psi_1\| = 2(1-\lambda)$ and $\|\varphi - \psi_2\| = 2\lambda$.
\item $\varphi(p\cdot p^{\perp}) = 0 = \varphi(p^{\perp}\cdot p)$. 
\end{enumerate}
\end{lemma}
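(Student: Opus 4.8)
I would prove the equivalence by establishing each implication directly, the real content being in $(a)\Rightarrow(b)$.

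For $(a)\Rightarrow(b)$, the plan is to feed the self-adjoint normal functional $h:=\varphi-\psi_1$ into Lemma~\ref{lem-predual3}. An element of $\{p\}_{\prime}$ is a normal state of $M$ supported by the projection $p$ (it lies in $\B_{M_*}$ and takes the value $1$ at $p$), so $\psi_1(p^{\perp}\cdot p^{\perp})=0$ and $\psi_1(p\cdot p^{\perp})=0=\psi_1(p^{\perp}\cdot p)$; hence $h(p)=\lambda-1$, $h(p^{\perp})=1-\lambda$ and $h(p\cdot p^{\perp})=\varphi(p\cdot p^{\perp})$. Lemma~\ref{lem-predual3} then gives
\[
2(1-\lambda)=\|\varphi-\psi_1\|=\|h\|\ \geq\ \sqrt{\,4(1-\lambda)^{2}+4\,\|\varphi(p\cdot p^{\perp})\|^{2}\,},
\]
which forces $\varphi(p\cdot p^{\perp})=0$; since $\varphi(p^{\perp}\cdot p)$ is the adjoint functional of $\varphi(p\cdot p^{\perp})$ and has the same norm, $\varphi(p^{\perp}\cdot p)=0$ as well. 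Note that only the first half of $(a)$ is used here.

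For $(b)\Rightarrow(a)$, assuming $\varphi(p\cdot p^{\perp})=0=\varphi(p^{\perp}\cdot p)$, I would split $\varphi=\varphi_{p}+\varphi_{p^{\perp}}$ with $\varphi_{p}:=\varphi(p\cdot p)$ and $\varphi_{p^{\perp}}:=\varphi(p^{\perp}\cdot p^{\perp})$; these are positive normal functionals of norms $\lambda$ and $1-\lambda$ supported under the orthogonal projections $p$ and $p^{\perp}$. Taking $\psi_{1}:=\lambda^{-1}\varphi_{p}\in\{p\}_{\prime}$ and $\psi_{2}:=(1-\lambda)^{-1}\varphi_{p^{\perp}}\in\{p^{\perp}\}_{\prime}$, the difference $\varphi-\psi_{1}=-\tfrac{1-\lambda}{\lambda}\varphi_{p}+\varphi_{p^{\perp}}$ is, by orthogonality of the supports, already the Jordan decomposition of $\varphi-\psi_1$, so $\|\varphi-\psi_{1}\|=(1-\lambda)+(1-\lambda)=2(1-\lambda)$, and symmetrically $\|\varphi-\psi_{2}\|=2\lambda$. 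This yields $(a)$.

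The main obstacle is the first implication: the crude bound $\|\varphi-\psi_{1}\|\geq 2(1-\lambda)$ obtained by testing against $p^{\perp}-p$ is always consistent with $(a)$, so it is essential to invoke the refined estimate of Lemma~\ref{lem-predual3}, whose extra term $4\|\varphi(p\cdot p^{\perp})\|^{2}$ is exactly what detects the off-diagonal part of $\varphi$. Everything else — the orthogonal decomposition of $\varphi$, the additivity of the norm on functionals with mutually orthogonal supports, and the identification of elements of $\{p\}_{\prime}$ with the normal states supported by $p$ — is routine.
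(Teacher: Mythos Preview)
Your proof is correct and follows essentially the same approach as the paper's: both directions use the same witnesses $\psi_1=\lambda^{-1}\varphi(p\cdot p)$, $\psi_2=(1-\lambda)^{-1}\varphi(p^{\perp}\cdot p^{\perp})$ for $(b)\Rightarrow(a)$, and both invoke Lemma~\ref{lem-predual3} applied to $\varphi-\psi_1$ (and/or $\varphi-\psi_2$) for $(a)\Rightarrow(b)$. Your version is slightly more direct, arguing by direct implication rather than contrapositive, and you correctly observe that only the condition on $\psi_1$ is needed --- the paper instead derives both strict inequalities and sums them to get $\|\varphi-\psi_1\|+\|\varphi-\psi_2\|>2$, which is a small redundancy.
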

\begin{proof}
$(b) \Rightarrow (a)$ Put $\psi_1 := \lambda^{-1}\varphi(p \cdot p)$ and $\psi_2 := (1-\lambda)^{-1} \varphi(p^{\perp} \cdot p^{\perp})$.\smallskip

\noindent
$(a) \Rightarrow (b)$ 
If $(b)$ is not true, then $\|\varphi(p\cdot p^{\perp})\| > 0$. 
Therefore, by the preceding lemma, we have
\[
\|\psi_1-\varphi\| \geq \sqrt{((1-\varphi(p)) + \varphi(p^{\perp}))^2 + 4\|\varphi(p\cdot p^{\perp})\|^2} > (1-\varphi(p)) + \varphi(p^{\perp}), 
\]
\[
\|\varphi-\psi_2\| \geq \sqrt{(\varphi(p) + (1-\varphi(p^{\perp})))^2 + 4\|\varphi(p\cdot p^{\perp})\|^2} > \varphi(p) + (1-\varphi(p^{\perp})) 
\]
for every $\psi_1 \in \{p\}_{\prime}$ and every $\psi_2 \in \{p^{\perp}\}_{\prime}$.
It follows that $\|\varphi-\psi_1\| + \|\varphi-\psi_2\| > 2$, so $(a)$ is not true.
\end{proof}

We return to the proof of Theorem \ref{thm-predual}. Our task is to show $\Phi(\varphi) = \varphi$ for every normal faithful state $\varphi$ on $M$. 
Set $\varphi_0 := \Phi(\varphi)\, (\in \Phi(\{1\}_{\prime}) = \{1\}_{\prime})$. Assume $\varphi \neq \varphi_0$. 
Consider the Jordan decomposition of $\varphi - \varphi_0\, (\neq 0)$. We obtain a nonzero projection $p \in \P(M)$ such that $\varphi(p) < \varphi_0(p)$ and
\[
\varphi(p\cdot p) \leq \varphi_0(p\cdot p),\,\, \varphi(p\cdot p^{\perp}) = \varphi_0(p\cdot p^{\perp}),\,\, 
\varphi(p^{\perp}\cdot p^{\perp}) \geq \varphi_0(p^{\perp}\cdot p^{\perp}).
\]
Put $\lambda := \varphi(p)$. Then $0 < \lambda < 1$. 
Set
\[
\mathcal{S}_0 := \{\psi \in \{1\}_{\prime}\mid \psi(p) = \lambda,\, \psi(p\cdot p^{\perp}) =0= \psi(p^{\perp}\cdot p)\}.
\]
By the preceding lemma, $\mathcal{S}_0$ is equal to 
\[
\{\psi \in \{1\}_{\prime}\mid \|\psi-\psi_1\| = 2(1-\lambda),\,\, \|\psi-\psi_2\| = 2\lambda\,\text{ for some }\, \psi_1\in \{p\}_{\prime},\,\,\psi_2\in \{p^{\perp}\}_{\prime} \}. 
\]
Thus the equations $\Phi(\{p\}_{\prime}) = \{p\}_{\prime}$, $\Phi(\{p^{\perp}\}_{\prime}) = \{p^{\perp}\}_{\prime}$ imply $\Phi(\mathcal{S}_0) = \mathcal{S}_0$. 
In particular, we have 
$\inf_{\psi\in\mathcal{S}_0} \|\varphi - \psi\| = \inf_{\psi\in\mathcal{S}_0} \|\varphi_0 - \psi\|$.
However, Lemma \ref{lem-predual3} implies
\[
\inf_{\psi\in\mathcal{S}_0} \|\varphi - \psi\| = \|\varphi - (\varphi(p\cdot p) + \varphi(p^{\perp}\cdot p^{\perp}))\| = 2\|\varphi(p\cdot p^{\perp})\|
\]
and
\[
\begin{split}
\inf_{\psi\in\mathcal{S}_0} \|\varphi_0 - \psi\| & \geq 
\inf_{\psi\in\mathcal{S}_0} \sqrt{((\varphi_0(p)-\psi(p)) - (\varphi_0(p^{\perp})-\psi(p^{\perp}))^2 + 4\|\varphi(p\cdot p^{\perp}))\|^2} \\
& = \sqrt{4(\varphi_0(p)-\varphi(p))^2 + 4\|\varphi(p\cdot p^{\perp})\|^2}.
\end{split}
\]
We have a contradiction.
The proof of Theorem \ref{thm-predual} is completed. $\hfill\Box$

\begin{corollary}\label{cor-predual}
Let $A$ and $B$ be C$^*$-algebras and $T: S(A^*) \to S(B^*)$ be a surjective isometry. (We do not assume $A$ or $B$ is unital.) 
Then there exists a unique real linear surjective isometry $\widetilde{T}: A^* \to B^*$ which extends $T$. 
\end{corollary}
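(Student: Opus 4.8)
The plan is to reduce Corollary \ref{cor-predual} directly to Theorem \ref{thm-predual} by recognizing the dual of a C$^*$-algebra as the predual of a von Neumann algebra. Recall that for any C$^*$-algebra $A$, the second dual $A^{**}$ carries a canonical structure of a von Neumann algebra, its \emph{enveloping von Neumann algebra}: the Arens product extends the multiplication of $A$ and is separately weak$^*$-continuous, and under this structure $A^{**}$ is $^*$-isomorphic to the weak$^*$-closure of $A$ in its universal representation. A standard fact (see e.g.\ Sakai's or Takesaki's monograph) is that the von Neumann algebra predual of $A^{**}$ is canonically and isometrically identified with $A^*$; that is, $(A^{**})_{*} = A^*$ as Banach spaces.

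Granting this identification, set $M := A^{**}$ and $N := B^{**}$, both von Neumann algebras. Then $S(A^*) = S(M_*)$ and $S(B^*) = S(N_*)$ isometrically, so the surjective isometry $T : S(A^*) \to S(B^*)$ is, verbatim, a surjective isometry $S(M_*) \to S(N_*)$. Theorem \ref{thm-predual} now supplies a unique real linear surjective isometry $\widetilde{T} : M_* \to N_*$ that extends $T$; transporting this back along the identification above yields the desired real linear surjective isometric extension $\widetilde{T} : A^* \to B^*$, and its uniqueness is inherited from that in Theorem \ref{thm-predual} (any real linear extension of $T$ to $A^*$ is in particular a real linear extension to $M_*$).

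Since everything is a transparent consequence of an established functional-analytic identification together with Theorem \ref{thm-predual}, there is essentially no genuine obstacle here. The only point requiring (routine) care is to confirm that the Banach-space norm on $A^*$ agrees with the predual norm coming from the von Neumann algebra $A^{**}$; this is immediate because the predual norm of a von Neumann algebra is precisely the restriction of the dual norm. In particular, applied to $A = B = \mathcal{K}(\mathcal{H})$ this recovers the theorem of Fern\'andez-Polo, Garc\'es, Peralta and Villanueva \cite{FGPV} on the spaces of trace class operators.
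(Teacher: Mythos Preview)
Your proof is correct and follows exactly the same approach as the paper: identify $A^*$ and $B^*$ with the preduals of the enveloping von Neumann algebras $A^{**}$ and $B^{**}$, and apply Theorem~\ref{thm-predual}. The paper's proof is simply a two-sentence version of what you wrote.
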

\begin{proof}
We know that $A^*$ and $B^*$ can be considered as the preduals of the enveloping von Neumann algebras $A^{**}$ and $B^{**}$, respectively. 
Thus we can apply Theorem \ref{thm-predual}.
\end{proof}

\section{Tingley's problem between the spaces of self-adjoint elements}\label{sa}
To solve Tingley's problem between the spaces of self-adjoint elements in (preduals of) von Neumann algebras, 
it seems to be difficult to make use of the set of self-adjoint unitaries 
because the theorem of Hatori and Moln\'ar (Theorem \ref{thm-HM}) cannot be applied in this case. 
What we use in this section is the structure of projection lattices of von Neumann algebras, 
but note that in general a surjective isometry between projection lattices cannot be extended to a linear surjective isometry. 
For example, every bijection from $\P(\ell^{\infty})$ onto itself is automatically isometric.

However, combining the metric condition with a condition about orthogonality, we see that a mapping between projection lattices can be extended linearly. 
We rely on the following theorem due to Dye \cite{Dy}. 
Let $M, N$ be von Neumann algebras. 
A bijection $T : \P(M) \to \P(N)$ (or $\P(M)\setminus \{0\} \to \P(N)\setminus \{0\}$) is called an \emph{orthoisomorphism} 
if for any projections $p, q \in \P(M)$ (or $\P(M)\setminus\{0\}$),\, $pq = 0 \Longleftrightarrow T(p)T(q) = 0$.

\begin{theorem}[Dye {\cite[Corollary of Theorem 1]{Dy}}]\label{thm-D}
Let $M$ and $N$ be von Neumann algebras and $T : \P(M) \to \P(N)$ be an orthoisomorphism. 
Suppose $M$ does not have a type I$_2$ summand. 
Then there exists a unique linear surjective isometry $\widetilde{T} : M_{sa} \to N_{sa}$ which extends $\Phi$.
\end{theorem}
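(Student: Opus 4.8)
The plan is to reconstruct the linear structure of $M_{sa}$ from the purely combinatorial datum of an orthoisomorphism. The first step is to observe that $T$ is automatically an ortho-lattice isomorphism. In any von Neumann algebra the order on $\P(M)$ is definable from orthogonality: for $p,q\in\P(M)$ one has $p\le q$ if and only if every projection orthogonal to $q$ is orthogonal to $p$ (for the nontrivial direction apply this with $q^{\perp}$ in place of the test projection). Hence $T$ and $T^{-1}$ are order preserving, so $T$ is an order isomorphism of complete lattices and therefore preserves $0$, $1$, and arbitrary suprema and infima; together with the defining property it preserves $\perp$, so $T(p^{\perp})=T(p)^{\perp}$ and, whenever $pq=0$, $T(p+q)=T(p\vee q)=T(p)\vee T(q)=T(p)+T(q)$. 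Thus $T$ is already additive on orthogonal pairs.

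Next I would isolate the only possible candidate for $\widetilde{T}$. Every $a\in M_{sa}$ with $\|a\|\le 1$ is a norm limit of \emph{simple} elements $\sum_{i=1}^{n}\lambda_i p_i$, where $p_1,\dots,p_n\in\P(M)$ are pairwise orthogonal with $\sum_i p_i=1$ and $\lambda_i\in\mathbb{R}$; moreover the decomposition of a given simple element into distinct-eigenvalue spectral projections is unique, so the rule $\widetilde{T}\bigl(\sum_i\lambda_i p_i\bigr):=\sum_i\lambda_i T(p_i)$ is unambiguous. Since the $T(p_i)$ are pairwise orthogonal projections in $N$ summing to $1$, this has norm $\max_i|\lambda_i|=\bigl\|\sum_i\lambda_i p_i\bigr\|$, and it is visibly $\mathbb{R}$-homogeneous; hence \emph{once it is shown to be additive} it is a $\mathbb{Q}$-linear isometry on a dense subset of $M_{sa}$, so it extends uniquely by continuity to a real-linear isometry $\widetilde{T}\colon M_{sa}\to N_{sa}$. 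Applying the same construction to the orthoisomorphism $T^{-1}$ produces the inverse, so $\widetilde{T}$ is surjective; writing $p=1\cdot p+0\cdot p^{\perp}$ shows $\widetilde{T}$ extends $T$; and density of the simple elements forces uniqueness.

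The genuine difficulty — and the exact place where the hypothesis that $M$ has no type I$_2$ summand enters — is proving that $\widetilde{T}$ is additive across simple elements that do not commute (so that the spectral projections of $a+b$ bear no lattice relation to those of $a$ and $b$). Without the hypothesis this is false: for $M=M_2(\mathbb{C})$ the nontrivial projections form a $2$-sphere on which orthogonality is the antipodal relation, so an orthoisomorphism is merely a bijection commuting with the antipodal map, and most such maps are not implemented by any isometry of $M_2(\mathbb{C})_{sa}$. To establish additivity I would follow Dye: reduce, via the central decomposition, to the case that $M$ is a factor; in the factor case use the comparison theory of projections — the relative dimension function, respectively the normalized trace in the finite case — and, for the type I$_n$ factors with $n\ge 3$ (including $n=\infty$), the fundamental-theorem-of-projective-geometry / Gleason-type rigidity that forces an orthoisomorphism of $\P(B(\H))$, $\dim\H\ge 3$, to be induced by a unitary or anti-unitary operator; this exhibits $T$ as implemented by a Jordan $^*$-isomorphism (possibly composed with the passage to the opposite algebra of $N$), whence additivity of $\widetilde{T}$ is immediate, and the general case is reassembled by a direct-integral argument. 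The main obstacle is precisely this rigidity step: orthogonality alone is too weak a structure, and one must inject enough geometry — through dimension theory and the $n\ge 3$ projective rigidity, both of which genuinely require the exclusion of type I$_2$ — to force the combinatorial map $T$ to be linearly implemented.
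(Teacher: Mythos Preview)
The paper does not prove this statement at all: Theorem~\ref{thm-D} is quoted verbatim as a result of Dye and used as a black box, with no argument given. So there is no ``paper's own proof'' to compare against; the only relevant comparison is whether your outline is a faithful sketch of Dye's argument.

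As a sketch it is broadly on the right track: you correctly observe that orthogonality determines the lattice order, that $T$ is therefore an ortho-lattice isomorphism additive on orthogonal families, that the only candidate for $\widetilde{T}$ is the map sending $\sum_i \lambda_i p_i$ to $\sum_i \lambda_i T(p_i)$ and extending by continuity, and that the entire content of the theorem lies in establishing additivity across noncommuting spectral resolutions. You also correctly locate the role of the no-type-I$_2$ hypothesis in the $M_2(\mathbb{C})$ counterexample. What remains vague is precisely the hard part. Dye's actual route is not quite ``reduce to factors via central decomposition and then invoke projective-geometry rigidity'': he works directly in the von Neumann algebra, first proving that an orthoisomorphism preserves Murray--von Neumann equivalence and hence the type decomposition, and then, crucially, showing that any two projections $p,q$ with $p\wedge q^{\perp}=0=p^{\perp}\wedge q$ lie in a common copy of $M_2$ inside $M$ on which $T$ is implemented by a $^*$-isomorphism or $^*$-anti-isomorphism; the no-I$_2$ hypothesis is used to embed such a $2\times 2$ block into a $3\times 3$ block where the coordinatization theorem of projective geometry applies. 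The global Jordan $^*$-isomorphism is then assembled from these local pieces, not from a direct-integral over factors. Your outline would need substantial work to become a proof, and the ``direct-integral argument'' you gesture at is not how the result is actually established.
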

The condition $M$ does not have a I$_2$ summand is inevitable in general cases. In order to drop this condition, we add another condition.

\begin{proposition}\label{prop-sa1}
Let $M$ and $N$ be von Neumann algebras and $T : \P(M) \to \P(N)$ be an orthoisomorphism. 
Suppose $\|p-q\| = \|\Phi(p) - \Phi(q)\|$ for every pair of maximal abelian projections $p, q$ in the type I$_2$ summand of $M$. 
Then there exists a unique linear surjective isometry $\widetilde{T} : M_{sa} \to N_{sa}$ which extends $T$.
\end{proposition}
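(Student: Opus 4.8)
The plan is to decompose $M$ into the type I$_2$ summand $M_2$ and its complement $M' := (1-z)M$, where $z$ is the central projection carrying the type I$_2$ part, and to treat the two pieces separately. Since an orthoisomorphism preserves central projections (a projection $p$ is central iff every projection is the sum of a subprojection of $p$ and a subprojection of $p^\perp$ in an orthogonality-compatible way; more simply, centrality is characterized by commutation, hence by the orthogonality relations), $z$ maps to a central projection $T(z) =: z'$ in $N$, and $N$ splits accordingly as $N_2 \oplus N'$ with $N_2 = z'N$. A routine check (using that type is an invariant detectable from the orthocomplemented lattice, via Dye's original theorem or via the fact that an orthoisomorphism restricts to one between the corner lattices) shows $N_2$ is type I$_2$ and $N'$ has no type I$_2$ summand. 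On the complementary part, $T$ restricts to an orthoisomorphism $\P(M') \to \P(N')$, and since $M'$ has no type I$_2$ summand, Theorem \ref{thm-D} gives a unique linear surjective isometry $\widetilde{T}' : M'_{sa} \to N'_{sa}$ extending it.

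The remaining work is the type I$_2$ summand. Here I would use the standard structure $M_2 \cong L^\infty(\Omega,\mu) \,\bar\otimes\, M_2(\mathbb{C}) = L^\infty(\Omega; M_2(\mathbb{C}))$, so that a projection is (essentially) a measurable field of projections in $M_2(\mathbb{C})$, i.e.\ either $0$, $1$, or a rank-one projection, the latter being parametrized pointwise by a point of the Bloch sphere $S^2$. The maximal abelian projections are exactly the fields that are rank-one a.e. The hypothesis says $T$ preserves the distance between any two such fields. The key geometric point is that for two rank-one projections $e(s), e(t)$ in $M_2(\mathbb{C})$ corresponding to Bloch vectors $s,t \in S^2$, one has $\|e(s)-e(t)\| = \tfrac12\|s-t\|_{\mathbb{R}^3}$ (Euclidean chordal distance), so preservation of the operator-norm distance between maximal abelian projection fields forces $T$ to act, fiberwise, by an isometry of $S^2$ — equivalently, since orthogonality of rank-one projections corresponds to antipodal Bloch vectors and is preserved, by an element of $O(3)$ (possibly varying measurably over $\Omega$, modulo the orthoisomorphism possibly also permuting $\Omega$ via a measure-class isomorphism, which it must since it preserves the central projections of $M_2$). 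An orthogonal transformation of $S^2 \subset \mathbb{R}^3$ is precisely the restriction of a real-linear map, and Bloch vectors sit inside $M_2(\mathbb{C})_{sa}$ as the trace-zero self-adjoint part via $s \mapsto s\cdot\vec\sigma$; combining this with the identity component $\tfrac12\,\mathrm{id}$, one gets a real-linear (in fact Jordan-$*$ up to a possible transpose, i.e.\ the $\det = -1$ case) isometry $\widetilde{T}_2 : (M_2)_{sa} \to (N_2)_{sa}$ extending $T|_{\P(M_2)}$. One should verify it is genuinely norm-isometric on all of $(M_2)_{sa}$, not merely on projections: this follows because on each fiber it is $\tfrac12\,\mathrm{id} \oplus (\text{orthogonal map on trace-zero part})$, and for a $2\times2$ self-adjoint matrix $a = \tfrac{\mathrm{tr}(a)}{2} + a_0$ the norm is $|\tfrac{\mathrm{tr}(a)}{2}| + \|a_0\|$ — wait, rather $\|a\| = |\lambda_{\max}|$; in any case the eigenvalue structure is $\tfrac{\mathrm{tr}(a)}{2} \pm \|\vec a_0\|$, both invariants preserved, so the norm is preserved — and passing to the $L^\infty$-sup over fibers preserves the norm on $(M_2)_{sa}$.

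Finally I would glue: set $\widetilde{T} := \widetilde{T}_2 \oplus \widetilde{T}'$ on $M_{sa} = (M_2)_{sa} \oplus M'_{sa}$. This is real-linear and surjective; it is isometric because the norm on a direct sum $M_{sa} = (M_2)_{sa} \oplus M'_{sa}$ is the maximum of the norms of the two components, and $\widetilde{T}_2$, $\widetilde{T}'$ are each isometric onto the corresponding summands of $N_{sa}$. It extends $T$ by construction on each summand, and since every projection in $M$ is uniquely $p z + p(1-z)$ it extends $T$ on all of $\P(M)$. Uniqueness follows because $M_{sa}$ is the norm-closed real-linear span of $\P(M)$ (spectral theorem), so any real-linear isometry extending $T$ is determined on a dense set, hence everywhere.

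The main obstacle I anticipate is the type I$_2$ analysis: one must be careful that the orthoisomorphism need not be ``fiberwise'' a priori — it could mix the base space $\Omega$ — so the honest argument runs through Dye's general structure theory for orthoisomorphisms (which identifies the base transformation as a Boolean/measure-class isomorphism of the centers and the remaining part as a pointwise orthogonal action on the Bloch sphere), and the only genuinely new ingredient beyond Dye is the translation ``operator-norm distance between maximal abelian projections $=$ chordal distance of Bloch vectors,'' upgrading the $\det = -1$ ambiguity-free part so that the whole fiber map is the restriction of a linear isometry. I would make sure to state the $2\times 2$ norm/eigenvalue identities explicitly since everything hinges on them, but would not belabor the measure-theoretic bookkeeping, citing \cite{Dy} for it.
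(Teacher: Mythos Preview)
Your approach is correct and takes a genuinely different, more geometric route than the paper's. Both proofs first split off the non-I$_2$ part via Dye's theorem and reduce to the type I$_2$ summand. For that summand the paper works globally and concretely in $M_2(A)$: after normalizing so that the diagonal projection $e_{11}$ and all central projections are fixed, it pushes the two specific constant projections $\tfrac12\bigl(\begin{smallmatrix}1&1\\1&1\end{smallmatrix}\bigr)$ and $\tfrac12\bigl(\begin{smallmatrix}1&i\\-i&1\end{smallmatrix}\bigr)$ through $T$, reads off unitaries $u_1,u_i\in\U(A)$, deduces $(u_1\pm u_i)/\sqrt2\in\U(A)$ from the distance hypothesis, writes down an explicit formula for $\widetilde T$ in terms of $u_1,u_i$, and then checks by hand that it agrees with $T$ on a norm-dense family of projections. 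Your Bloch-sphere argument is more conceptual---identifying maximal abelian projections with $S^2$-valued fields, translating the norm hypothesis into chordal-distance preservation, and extracting a fiberwise $O(3)$ action. The paper's explicit route buys you freedom from any fiberwise or measurability bookkeeping, since $u_1,u_i$ and $\widetilde T$ are global from the outset; your route makes the geometry transparent but, as you anticipate, needs a localization step to pass from sup-norm isometry to pointwise $O(3)$. One caution: Dye's theorem as stated expressly excludes type I$_2$, so it cannot be cited for the I$_2$ bookkeeping; the locality (that $T(p)$ restricted to a central summand depends only on $p$ restricted there) follows instead from elementary ortholattice reasoning once central projections are fixed, and measurability of the $O(3)$-field then comes from evaluating $T$ on constant projection fields.
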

\begin{proof}
It suffices to show this proposition when $M$ and $N$ are of type I$_2$.
Since $T$ restricts to a bijection between the classes of central projections, it follows that $M$ is $^*$-isomorphic to $N$.
We decompose $M$ as $M = M_2(A)$ using an abelian von Neumann algebra $A$. Then the element
$T
\begin{pmatrix}
1 & 0 \\
0 & 0
\end{pmatrix}$
is a maximal abelian projection in $N \,(\cong M_2(A))$. 
Taking an appropriate $^*$-isomorphism from $N$ onto $M_2(A)$, we may assume 
$M = N = M_2(A)$, 
$T
\begin{pmatrix}
1 & 0 \\
0 & 0
\end{pmatrix}
=
\begin{pmatrix}
1 & 0 \\
0 & 0
\end{pmatrix}$
and every nonzero central projection in $M_2(A)$ is fixed under $T$.
We also have 
\[
T\begin{pmatrix}
0 & 0 \\
0 & 1
\end{pmatrix}
= T\left(\begin{pmatrix}
1 & 0 \\
0 & 0
\end{pmatrix}^{\perp}\right)
= T\begin{pmatrix}
1 & 0 \\
0 & 0
\end{pmatrix}^{\perp}
=\begin{pmatrix}
0 & 0 \\
0 & 1
\end{pmatrix}.
\]
Thus $T$ restricts to a bijection from 
\[
\left\{ p \in \P(M_2(A)) \middle| 
\left\| p - 
\begin{pmatrix}
1 & 0 \\
0 & 0
\end{pmatrix}
\right\|
= \frac{1}{\sqrt{2}} = \left\| p - 
\begin{pmatrix}
0 & 0 \\
0 & 1
\end{pmatrix}
\right\| \right\}
= 
\left\{
\frac{1}{2}
\begin{pmatrix}
1 & u \\
u^* & 1
\end{pmatrix}
\middle| u \in \U(A)
\right\}
\]
onto itself. There exist $u_1, u_i \in \U(A)$ such that
\[
T\left( \frac{1}{2}
\begin{pmatrix}
1 & 1 \\
1 & 1
\end{pmatrix}
\right) = \frac{1}{2}
\begin{pmatrix}
1 & u_1 \\
u_1^* & 1
\end{pmatrix}, \quad
T\left( \frac{1}{2}
\begin{pmatrix}
1 & i \\
-i & 1
\end{pmatrix}
\right) = \frac{1}{2}
\begin{pmatrix}
1 & u_i \\
u_i^* & 1
\end{pmatrix}.
\]
Since 
\[
\frac{\|u_1-u_i\|}{2} =
\left\|
\frac{1}{2}
\begin{pmatrix}
1 & u_1 \\
u_1^* & 1
\end{pmatrix}
- \frac{1}{2}
\begin{pmatrix}
1 & u_i \\
u_i^* & 1
\end{pmatrix}\right\|
= 
\left\|
\frac{1}{2}
\begin{pmatrix}
1 & 1 \\
1 & 1
\end{pmatrix}
- \frac{1}{2}
\begin{pmatrix}
1 & i \\
-i & 1
\end{pmatrix}\right\|=\frac{1}{\sqrt{2}}
\]
and
\[ 
\frac{\|u_1+u_i\|}{2} =
\left\|
\frac{1}{2}
\begin{pmatrix}
1 & u_1 \\
u_1^* & 1
\end{pmatrix}
- \frac{1}{2}
\begin{pmatrix}
1 & u_i \\
u_i^* & 1
\end{pmatrix}^{\perp}\right\|
= 
\left\|
\frac{1}{2}
\begin{pmatrix}
1 & 1 \\
1 & 1
\end{pmatrix}
- \frac{1}{2}
\begin{pmatrix}
1 & i \\
-i & 1
\end{pmatrix}^{\perp}\right\|=\frac{1}{\sqrt{2}},
\]
it follows that $ (u_1 \pm u_i)/\sqrt{2} \in \U(A)$.
We define a linear surjective isometry $\widetilde{T}: M_2(A)_{sa} \to M_2(A)_{sa}$ by 
\[
\widetilde{T}
\begin{pmatrix}
a_1 & a_2 + a_3i \\
a_2 - a_3i & a_4
\end{pmatrix} 
= 
\begin{pmatrix}
a_1 & a_2u_1 + a_3u_i \\
a_2u_1^* + a_3u_i^* & a_4
\end{pmatrix}, \quad a_1, a_2, a_3, a_4 \in A_{sa}.
\]
Let $c \in \mathbb{T}$. Consider the distance from $\displaystyle\frac{1}{2}
\begin{pmatrix}
1 & c\\
\overline{c} & 1
\end{pmatrix}$ to 
\[
\frac{1}{2}\begin{pmatrix}
1 & 1 \\
1 & 1
\end{pmatrix},\quad \frac{1}{2}\begin{pmatrix}
1 & -1 \\
-1 & 1
\end{pmatrix} \left(= \left(\frac{1}{2}\begin{pmatrix}
1 & 1 \\
1 & 1
\end{pmatrix}\right)^{\perp}\right),
\]
\[
\frac{1}{2}\begin{pmatrix}
1 & i \\
-i & 1
\end{pmatrix}\quad \text{and}\quad \frac{1}{2}\begin{pmatrix}
1 & -i \\
i & 1
\end{pmatrix}\left(= \left(\frac{1}{2}\begin{pmatrix}
1 & i \\
-i & 1
\end{pmatrix}\right)^{\perp}\right).
\]
Then we easily obtain 
\[
T\left( \frac{1}{2}
\begin{pmatrix}
1 & c\\
\overline{c} & 1
\end{pmatrix}
\right)  = \frac{1}{2}
\begin{pmatrix}
1 & u_1\operatorname{Re} c + u_i\operatorname{Im} c\\
u_1^*\operatorname{Re} c + u_i^*\operatorname{Im} c & 1
\end{pmatrix} = \widetilde{T} \left( \frac{1}{2}
\begin{pmatrix}
1 & c\\
\overline{c} & 1
\end{pmatrix}
\right).
\]
A similar consideration shows that
\[
\begin{split}
T\left( \frac{1}{1+|c|^2}
\begin{pmatrix}
1 & c\\
\overline{c} & |c|^2
\end{pmatrix}
\right) & = \frac{1}{1+|c|^2}
\begin{pmatrix}
1 & u_1\operatorname{Re} c + u_i\operatorname{Im} c\\
u_1^*\operatorname{Re} c + u_i^*\operatorname{Im} c & |c|^2
\end{pmatrix} \\
& = \widetilde{T} \left( \frac{1}{1+|c|^2}
\begin{pmatrix}
1 & c\\
\overline{c} & |c|^2
\end{pmatrix}
\right)
\end{split}
\]
for an arbitrary $c \in \mathbb{C}$.
Since $T$ is an orthoisomorphism, we obtain
\[
T\left( 
\sum_{n=1}^N \frac{1}{1+|c_n|^2}
\begin{pmatrix}
q_n & c_nq_n\\
\overline{c_n}q_n & |c_n|^2q_n
\end{pmatrix}
\right) 
 = \,  \widetilde{T}\left( 
\sum_{n=1}^N \frac{1}{1+|c_n|^2}
\begin{pmatrix}
q_n & c_nq_n\\
\overline{c_n}q_n & |c_n|^2q_n
\end{pmatrix}
\right)
\]
for arbitrary numbers $c_1, \ldots, c_N \in \mathbb{C}$ and projections $\{q_n\}_{n=1}^N \subset \P(A)$ with $q_1 + \cdots + q_N = 1 \in A$. 
The set
\[
\left\{
\sum_{n=1}^N \frac{1}{1+|c_n|^2}
\begin{pmatrix}
q_n & c_nq_n\\
\overline{c_n}q_n & |c_n|^2q_n
\end{pmatrix}
\middle| 
\begin{matrix}
N \in \mathbb{N},\,\,  c_1, \ldots, c_N \in \mathbb{C}, \\
q_1, \cdots, q_N \in \P(A),\,\, q_1 + \cdots + q_N = 1 \in A  
\end{matrix}
\right\}
\]
is norm-dense in the class of maximal abelian projections in $\P(M_2(A))$, so we have $T = \widetilde{T}$ on this class. 
Let $p\in M_2(A)$. Then $p$ can be decomposed as $p=
\begin{pmatrix}
q_0 & 0 \\
0 & q_0
\end{pmatrix}
+
\begin{pmatrix}
q_1 & 0 \\
0 & q_1
\end{pmatrix}
p_0$,
where $q_0$ and $q_1$ are mutually orthogonal projections in $A$ and $p_0$ is a maximal abelian projection in $M_2(A)$. 
Since $T$ is an orthoisomorphism, we obtain $T(p) = \widetilde{T}(p)$. 
\end{proof}

We also make use of the following proposition, whose proof can be found in the paper of Akemann and Pedersen \cite{AP}.
\begin{proposition}[See {\cite[Lemma 2.7]{AP}}]\label{prop-A}
Let $A$ be a C$^*$-algebra, $p$ be a compact projection, $q$ be an open projection with $p \leq q$. 
Then there exist a decreasing net $(x_{\alpha})$ and an increasing net $(y_{\alpha})$ in $A_{+}$ 
such that $p \leq x_{\alpha} , y_{\alpha} \leq q$ with the property 
$x_{\alpha}$ converges to $p$ and $y_{\alpha}$ converges to $q$ $\sigma$-strongly in $A^{**}$.
\end{proposition}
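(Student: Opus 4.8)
The plan is to extract both nets from the norm-closed, convex order interval
\[
I \;:=\; \{\, a \in A_{+} \;:\; p \le a \le q \,\}
\]
of $A$: once one knows that $I$ is nonempty, that it is both upward and downward directed, and that $\sup I = q$ and $\inf I = p$ (these suprema and infima being taken in $A^{**}$), one simply takes $I$ itself directed upward as the increasing net $(y_{\alpha})$ and directed downward as the decreasing net $(x_{\alpha})$. The requirements $p\le x_{\alpha},y_{\alpha}\le q$ are then automatic, and so are the $\sigma$-strong convergences $y_{\alpha}\to q$ and $x_{\alpha}\to p$, since an increasing (resp.\ decreasing) norm-bounded net of self-adjoint operators in a von Neumann algebra converges $\sigma$-strongly (indeed strongly) to its supremum (resp.\ infimum).

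For nonemptiness I would invoke the interpolation part of the Akemann--Pedersen theory (one of the main points of \cite{AP}): since $p$ is compact, $q$ is open and $p \le q$, there is $a \in A$ with $p \le a \le q$. Any such $a$ satisfies $0 \le a \le 1$; multiplying $p(q-a)p \ge 0$ by $p$ and using $pqp = p$ gives $pap = p$, hence $(q-a)^{1/2}p = 0$, hence $ap = pa = p$, $a^{1/2}p = p$ and $\chi_{\{1\}}(a) \ge p$ — facts I would use repeatedly. Directedness of $I$ I would establish by ``relative (parallel / truncated) products'': given $a_{1},a_{2}\in I$, a regularized version of $a_{1}^{1/2}\bigl(a_{1}^{-1/2}a_{2}a_{1}^{-1/2}\wedge q\bigr)a_{1}^{1/2}$ lies in $I$ and is dominated by both $a_{1}$ and $a_{2}$ (it equals $a_{2}$ on the part where $a_{1}^{-1/2}a_{2}a_{1}^{-1/2}\le q$ and $a_{1}$ elsewhere), and dually a regularized version of $q-(q-a_{1})^{1/2}\bigl((q-a_{1})^{-1/2}(q-a_{2})(q-a_{1})^{-1/2}\wedge q\bigr)(q-a_{1})^{1/2}$ lies in $I$ and dominates both. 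Finally $\inf I = p$ and $\sup I = q$ I would deduce from the semicontinuity / separation properties of a compact projection sitting under an open projection established in \cite{AP}: for $\inf I = p$, given a projection $e$ with $p\le e\le q$, $e\ne p$, and a unit vector $\xi\in(e-p)\H$ one produces $a\in I$ with $\langle a\xi,\xi\rangle$ arbitrarily small, so that $e$ cannot be a lower bound for $I$; symmetrically, to get $\sup I=q$ one makes $\langle a\xi,\xi\rangle$ arbitrarily close to $1$ for each unit $\xi\in q\H$.

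The main obstacle, as the remarks above already hint, is the recurring fact that $q$ (and $p$) is typically not an element of $A$: the natural closed formulas witnessing directedness, and the natural increasing net $a+(q-a)^{1/2}e_{\lambda}(q-a)^{1/2}$ built from an approximate unit $(e_{\lambda})$ of $\operatorname{her}(q)=qA^{**}q\cap A$, manifestly leave $A$. Thus directedness of $I$ \emph{inside $A$}, together with the separation statements underlying $\inf I = p$ and $\sup I = q$, cannot be obtained by bare functional calculus; they genuinely rest on the Akemann--Pedersen interpolation and semicontinuity theorems, plus a careful regularization in the non-invertible case. Once those are in place, the monotone-net principle recalled at the outset closes the argument at once.
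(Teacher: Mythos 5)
First, a point of reference: the paper does not prove this proposition at all --- it is quoted from Akemann--Pedersen and the text explicitly sends the reader to \cite[Lemma 2.7]{AP} for the proof. So there is no in-paper argument to compare yours against, and your proposal must stand on its own. Its overall shape is sound: bounded monotone nets of self-adjoint elements do converge $\sigma$-strongly to their suprema/infima, and the noncommutative Urysohn interpolation theorem is indeed the right source of an element $a\in A$ with $p\le a\le q$. But the step carrying all the weight --- upward and downward directedness of $I=\{a\in A_+: p\le a\le q\}$ --- is not actually established. The formulas you offer are undefined as written: $a_1^{-1/2}$ does not exist for non-invertible $a_1$, and $\cdot\wedge q$ presupposes an infimum of two positive operators, which in general does not exist in the positive cone of a C$^*$-algebra. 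You acknowledge this and defer it to ``a careful regularization,'' but that deferral sits exactly where the content of the lemma lies, so as submitted the argument has a genuine hole. The identification $\inf I=p$ is likewise only sketched in a way that rules out \emph{projections} as better lower bounds, whereas the $\sigma$-strong limit $z$ of a decreasing net in $I$ is merely a positive element with $zp=p$; you must show $z(1-p)=0$ by testing against all unit vectors of $(1-p)\mathcal{H}$, not only those of $(e-p)\mathcal{H}$ for projections $e$.

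The directedness detour is also avoidable, which is why I would call the route, not just the execution, suboptimal. Fix one $a\in A$ with $p\le a\le q$ from the interpolation theorem; your own computation gives $ap=pa=p$, and similarly $(1-q)a(1-q)=0$ forces $aq=qa=a$, so $a$ commutes with both projections. Since $1-p$ and $q$ are open, take increasing nets $(u_\alpha)$ and $(e_\lambda)$ in $A_+$ with $u_\alpha\nearrow 1-p$ and $e_\lambda\nearrow q$ $\sigma$-strongly (monotone $\sigma$-strong convergence forces $u_\alpha\le 1-p$ and $e_\lambda\le q$). Put $x_\alpha:=a^{1/2}(1-u_\alpha)a^{1/2}=a-a^{1/2}u_\alpha a^{1/2}\in A_+$: this net is decreasing, satisfies $p=a^{1/2}pa^{1/2}\le x_\alpha\le a\le q$, and converges $\sigma$-strongly to $a-a^{1/2}(1-p)a^{1/2}=p$. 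Put $y_\lambda:=a+(1-a)^{1/2}e_\lambda(1-a)^{1/2}$, which lies in $A_+$ because $(1-a)^{1/2}-1\in A$ and $A$ is an ideal in its unitization: this net is increasing, satisfies $p\le a\le y_\lambda$ and, by the commutation of $a$ with $q$, $y_\lambda\le a+(1-a)^{1/2}q(1-a)^{1/2}=a+(q-a)=q$, and converges $\sigma$-strongly to $q$. This produces the two nets directly, with no appeal to directedness of the whole order interval, and is in substance the argument of \cite[Lemma 2.7]{AP}.
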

Using this, we obtain the following proposition.

\begin{proposition}\label{prop-sa2}
Let $M$ be a von Neumann algebra and $\F \subset \B_{M_{sa}}$ be a norm-closed proper face. 
Then $\F$ is $\sigma$-weakly closed if and only if there exists a unique element $x_{\F} \in \F$ such that $\|x_{\F} - y\| \leq 1$ for every $y \in \F$.
\end{proposition}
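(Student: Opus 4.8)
The plan is to use the concrete description of the faces of $\B_{M_{sa}}$ from Theorem \ref{thm-AP2}. By part $(b)$ there, a norm-closed proper face $\F \subset \B_{M_{sa}}$ has the form $\F = \{x \in \B_{M_{sa}} \mid x(p-q) = p+q\}$ for a unique pair of compact projections $p, q \in M^{**}$ with $pq = 0$; and by part $(e)$, $\F$ is $\sigma$-weakly closed precisely when $p$ and $q$ are actually projections in $M$ (equivalently, when the self-adjoint partial isometry $v = p - q$ lies in $M$), in which case $\F = v + (1-v^2)\B_{M_{sa}}(1-v^2)$. So the content to prove is: such a face contains a (necessarily unique) element at distance $\le 1$ from every other element of the face if and only if $p, q \in \P(M)$.

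First I would treat the "if" direction. Suppose $p, q \in \P(M)$ and set $x_{\F} := p - q = v$. For any $y \in \F$ we can write $y = v + (1-v^2)z(1-v^2)$ with $z \in \B_{M_{sa}}$, so $x_{\F} - y = -(1-v^2)z(1-v^2)$, whence $\|x_{\F} - y\| = \|(1-v^2)z(1-v^2)\| \le \|z\| \le 1$. Uniqueness: if $x' \in \F$ also satisfies $\|x' - y\| \le 1$ for all $y \in \F$, then taking $y = x_{\F}$ gives $\|x' - x_{\F}\| \le 1$; writing $x' = v + (1-v^2)z'(1-v^2)$, consider the element $y_0 = v - (1-v^2)z'(1-v^2) \in \F$ (legitimate since $-z' \in \B_{M_{sa}}$): then $\|x' - y_0\| = 2\|(1-v^2)z'(1-v^2)\|$ must be $\le 1$, forcing $(1-v^2)z'(1-v^2)$ to have norm $\le 1/2$; iterating the same trick with scaled reflections, or more directly noting that $x'$ must be at distance $\le 1$ from \emph{every} extreme point of $\F$ and the extreme points of $\B_{(1-v^2)M_{sa}(1-v^2)}$ span a ball of radius $1$, pins down $(1-v^2)z'(1-v^2) = 0$, i.e.\ $x' = x_{\F}$.

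For the "only if" direction, suppose $\F$ is \emph{not} $\sigma$-weakly closed, so at least one of $p, q$ — say $p$ — is a compact projection in $M^{**}$ that is not in $M$. I would argue that no element of $\F$ can be within distance $1$ of all of $\F$. The geometric obstruction is that compactness of $p$ means $p$ is approximated from above, via Proposition \ref{prop-A}, by an increasing net $(y_\alpha)$ in $M_+$ with $y_\alpha \le$ some open projection and $y_\alpha \to p$ $\sigma$-strongly; crucially $p \notin M$ means $p$ is \emph{not} a $\sigma$-strong limit within the norm-closed face, so "directions" orthogonal to the supposed center keep appearing. Concretely, given a candidate $x_0 \in \F$, lift it to $x_0 = p - q + w$ with $w$ in the corner, and produce a sequence $y_n \in \F$ whose self-adjoint "oscillation" against $x_0$ forces $\|x_0 - y_n\| \to$ a value $> 1$; the failure of $p$ to be a norm limit of elements of $M$ below a fixed bound is exactly what makes this oscillation non-vanishing. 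The main obstacle, and the step I'd spend the most care on, is this last construction: translating "$p$ compact but not in $M$" into an explicit sequence in $\F$ realizing distance bounded away from $1$ from any prospective center — one expects to mimic the constructions of the operators $y'_n, y'''_n$ used in Lemma \ref{lem-C*} and Lemma \ref{lem-predual2}, exploiting that along a strictly decreasing/increasing net the relevant spectral gap does not close. Everything else (the face description, the corner decomposition, the reflection trick for uniqueness) is routine given the Akemann–Pedersen machinery already recalled.
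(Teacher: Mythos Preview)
Your overall plan matches the paper's: use the Akemann--Pedersen description of the face, and for the non-$\sigma$-weakly-closed case appeal to Proposition~\ref{prop-A}. The ``if'' direction is fine; for uniqueness you do not need to iterate --- simply take $y = v \pm (1-v^2) \in \F$ and observe that $\|x' - y\| = \|w \mp (1-v^2)\|$ (with $w = x' - v$ in the corner) already exceeds $1$ unless $w = 0$.

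The gap is in your ``only if'' direction. You write $x_0 = (p-q) + w$ but never state the decisive fact: $w \neq 0$. This is immediate --- since at least one of $p,q$ lies in $M^{**}\setminus M$, so does $p-q$ (because $p+q = (p-q)^2$), hence $x_0 \in M$ forces $w = x_0 - (p-q) \neq 0$. Once you have this, no ``oscillation'' or analogue of the Lemma~\ref{lem-C*} constructions is needed, and the paper's argument is much more direct than what you sketch: apply Proposition~\ref{prop-A} to the compact projection $p$ inside the open projection $1-q$ to get nets $a_\alpha \searrow 2p-1$ and $b_\alpha \nearrow 1-2q$ inside $\F = \{y \in M_{sa} : 2p-1 \le y \le 1-2q\}$. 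Then $x_0 - a_\alpha \to w + (1-p-q)$ and $x_0 - b_\alpha \to w - (1-p-q)$ $\sigma$-strongly; since $w$ is a nonzero self-adjoint element of the corner $(1-p-q)M^{**}(1-p-q)$, one of $w \pm (1-p-q)$ has norm $>1$, and lower semicontinuity of the norm finishes. Your phrase ``the failure of $p$ to be a norm limit $\ldots$ makes this oscillation non-vanishing'' is pointing in the right direction but obscures this simple mechanism.
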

\begin{proof}
Suppose $\F$ is $\sigma$-weakly closed. Then by $(e)$ of Theorem \ref{thm-AP2} there exists a unique pair of projections
$p, q \in \P(M)$ such that $pq = 0$ and 
\[
\F = p - q +(1-p-q)\B_{M_{sa}}(1-p-q) = p - q +\B_{((1-p-q)M(1-p-q))_{sa}}.
\]
Then $x := p-q$ is the only element which satisfies given conditions.

Suppose $\F$ is not $\sigma$-weakly closed. 
By $(b)$ of Theorem \ref{thm-AP2}, there exists a unique pair of compact projections $p, q$ such that $pq=0$ and 
\[
\F = \{y \in M_{sa} \mid y(p-q) = p+q\} = \{y \in M_{sa} \mid 2p-1 \leq y \leq 1-2q\}.
\] 
Since $\F$ is not $\sigma$-weakly closed, at least one of $p$ and $q$ is not an element of $\P(M)$. 
Let $x$ be in $\F$. Then $0 \neq x-(p-q) \in (1-p-q)M^{**}_{sa}(1-p-q)$.
However, by the preceding lemma, there exist nets $(a_\alpha), (b_\alpha) \in \F$ 
such that $a_{\alpha} \searrow 2p-1$ and $b_{\alpha} \nearrow 1-2q$\, $\sigma$-strongly in $M^{**}$. 
Hence we have $x-a_{\alpha}\to x-2p+1 = (x-(p-q)) + (1-p-q)$ and 
$x-b_{\alpha}\to x-1+2q = (x-(p-q)) - (1-p-q)$ ($\sigma$-strongly), thus
$\varlimsup \|x - a_{\alpha}\| > 1$ or $\varlimsup \|x - b_{\alpha}\| > 1$. 
Therefore, there exists no element $x$ which satisfies the given conditions.
\end{proof}

Therefore, we can detect $\sigma$-weakly closed faces in $\B_{M_{sa}}$ from the class of norm-closed faces only by the metric structure of them.\smallskip

Recall that Mankiewicz's generalization of the Mazur-Ulam theorem states that 
every surjective isometry between open connected nonempty subsets of real Banach spaces 
extends to an affine surjective isometry between the whole spaces \cite{M}. 
Now we can prove the following theorem:

\begin{theorem}\label{thm-sa1}
Let $M, N$ be von Neumann algebras and $T : S(M_{sa}) \to S(N_{sa})$ be a surjective isometry. 
Then there exists a unique linear surjective isometry $\widetilde{T} : M_{sa} \to N_{sa}$ which extends $T$.
\end{theorem}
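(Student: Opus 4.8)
The plan is to translate $T$, through the Akemann--Pedersen facial description, into a statement about the projection lattices of $M$ and $N$, apply Dye's theorem in the form of Proposition~\ref{prop-sa1}, and then check that the resulting linear isometry actually extends $T$ by using Mankiewicz's theorem on the ``flat'' ball-shaped faces of the spheres.

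First I would extract the combinatorial data. By Proposition~\ref{prop-T} and Proposition~\ref{prop-sa2}, $T$ carries $\sigma$-weakly closed proper faces of $\B_{M_{sa}}$ bijectively onto those of $\B_{N_{sa}}$, and it sends the distinguished ``centre'' $x_{\F}$ of such a face (the unique point of $\F$ within distance $1$ of every point of $\F$) to the centre of the image; since by Theorem~\ref{thm-AP2}$(e)$ these faces are exactly the sets $v+(1-v^{2})\B_{M_{sa}}(1-v^{2})$ with centre a nonzero self-adjoint partial isometry $v$, this means $T$ restricts to a bijection between the sets of nonzero self-adjoint partial isometries of $M$ and $N$, with $T(-v)=-T(v)$ by Proposition~\ref{prop-face}$(b)$. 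Restricting further to extreme points of $\B_{M_{sa}}$, which are exactly the self-adjoint unitaries $2p-1$ $(p\in\P(M))$, Proposition~\ref{prop-T} produces a bijection $S\colon\P(M)\to\P(N)$ with $T(2p-1)=2S(p)-1$; then $S(p^{\perp})=S(p)^{\perp}$, and since $\|S(p)-S(q)\|=\tfrac12\|(2p-1)-(2q-1)\|=\|p-q\|$ the map $S$ is an isometry of projection lattices.

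The crux is to promote $S$ to an orthoisomorphism. I would first normalise: after composing $T$ with a linear surjective isometry of $N_{sa}$ one may assume $T(1_{M})=1_{N}$, since $1_{M}$ is an isolated point of $\operatorname{ext}(\B_{M_{sa}})$, so $T(1_{M})$ is an isolated point of $\operatorname{ext}(\B_{N_{sa}})$, and such points are exactly the self-adjoint unitaries that are trivial on every non-abelian direct summand of $N$ --- each of which is carried to $1_{N}$ by a suitable linear surjective isometry. Next, every $\sigma$-weakly closed proper face $\F$ of $\B_{M_{sa}}$ with more than one point is a translate $x_{\F}+\B_{C_{sa}}$ of the unit ball of the self-adjoint part of a von Neumann-algebra corner $C$ and is contained in $S(M_{sa})$, so $T$ maps it isometrically onto such a translated ball in $N$; restricting to the interiors and invoking Mankiewicz's theorem, $T|_{\F}$ is affine, the translate of a linear surjective isometry of corner algebras. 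Applied to $\F_{(p,0)}=p+\B_{(p^{\perp}Mp^{\perp})_{sa}}$ with $p\neq 0,1$: for $p\le p'$ one has $p'-p\in\P(p^{\perp}Mp^{\perp})$ and $T(p')-T(p)$ is its image under this linear isometry, which (thanks to the normalisation) preserves the order of projections; hence $S$ preserves order, and together with orthocomplement-preservation this yields $p\le q^{\perp}\iff S(p)\le S(q)^{\perp}$, i.e.\ $S$ preserves orthogonality. Now $S\colon\P(M)\to\P(N)$ is a bona fide orthoisomorphism, and --- being also an isometry of projection lattices --- it automatically satisfies the type~I$_2$ hypothesis of Proposition~\ref{prop-sa1}, which therefore yields a unique linear surjective isometry $\widetilde{T}\colon M_{sa}\to N_{sa}$ with $\widetilde{T}|_{\P(M)}=S$.

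It remains to see that $\widetilde{T}$ extends $T$. Both $\widetilde{T}$ and $T$ are affine on every $\sigma$-weakly closed proper face $\F$ of $\B_{M_{sa}}$ --- $\widetilde{T}$ because it is linear, $T$ by the Mankiewicz argument --- and they agree on the extreme points $2t-1$ of $\F$, since $\widetilde{T}(2t-1)=2S(t)-1=T(2t-1)$. As $\F$ is a translated closed ball of the self-adjoint part of a von Neumann algebra, it is the closed convex hull of its extreme points (each self-adjoint contraction being a norm-limit of convex combinations of the symmetries $2\chi_{(s,1]}(a)-1$), so $T=\widetilde{T}$ on $\F$; and every $a\in S(M_{sa})$ lies in the $\sigma$-weakly closed face $\F_{(E_{a}(\{1\}),\,E_{a}(\{-1\}))}$ cut out by its spectral projections for the eigenvalues $\pm1$, so $T=\widetilde{T}$ throughout $S(M_{sa})$. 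Uniqueness is clear since $S(M_{sa})$ spans $M_{sa}$. The hard part is the orthogonality step: it is precisely where one must use the \emph{geometry} --- Mankiewicz's theorem forcing $T$ to be genuinely affine on the ball-faces --- rather than only the facial combinatorics, and where the behaviour of $T$ on $1_{M}$ must be pinned down; by contrast the type~I$_2$ issue dissolves because $S$ turns out to be a full projection-lattice isometry.
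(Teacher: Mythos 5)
Your overall strategy is the same as the paper's: pass to the $\sigma$-weakly closed faces and their metric centres, normalise so that $T(1)=1$ using the fact that $T(1)$ is an isolated extreme point and hence a central symmetry, use Mankiewicz to make $T$ affine on the ball-shaped faces, extract an orthoisomorphism of projection lattices satisfying the type I$_2$ metric hypothesis, apply Proposition \ref{prop-sa1}, and then match $T$ with $\widetilde T$ face by face. One bookkeeping issue first: you define $S$ by $T(2p-1)=2S(p)-1$ but then argue order-preservation using the values $T(p)$, $T(p')$ at the projections themselves. These do agree, but only because affinity of $T$ on $\F_{(p,0)}$ together with the normalisation gives $T(p)=\tfrac12\bigl(T(1)+T(2p-1)\bigr)=S(p)$; this identity should be stated, since without it ``hence $S$ preserves order'' does not follow from a statement about $T(p')-T(p)$. (The paper sidesteps this by working throughout with centres of faces rather than with symmetries, and gets order-preservation for free from preservation of face inclusion, whereas your route goes through Kadison's theorem to see that the unital linear isometry $L$ of corners is order-preserving --- both work.)

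The genuine gap is in the last step. It is not true that every $a\in S(M_{sa})$ lies in a \emph{proper} $\sigma$-weakly closed face: if neither $1$ nor $-1$ is an eigenvalue of $a$ (e.g.\ $a(t)=t$ in $L^\infty[0,1]$, or $\operatorname{diag}(1-1/n)$ in $\B(\ell^2)_{sa}$), then $E_a(\{1\})=E_a(\{-1\})=0$ and your face $\F_{(0,0)}$ is all of $\B_{M_{sa}}$, about which nothing has been proved --- $T$ is not even defined off the sphere, and Proposition \ref{prop-T} only concerns proper faces. Your argument therefore establishes $T=\widetilde T$ only on the union of the proper $\sigma$-weakly closed faces, namely on $\{a\in S(M_{sa}) \mid \chi_{\{1\}}(a)\neq 0 \text{ or } \chi_{\{-1\}}(a)\neq 0\}$. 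This set is norm-dense in $S(M_{sa})$ (perturb the spectrum of $a$ near $\pm 1$ by functional calculus), and both $T$ and $\widetilde T$ are continuous, so the conclusion survives; this density-plus-continuity step is exactly how the paper closes its proof, but as written your final step fails.
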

\begin{proof}
Propositions \ref{prop-T} and \ref{prop-sa2} imply that for a $\sigma$-weakly closed proper face $\F_1 \subset \B_{M_{sa}}$, 
$\F_2 := T(\F_1)$ is a $\sigma$-weakly closed proper face in $\B_{N_{sa}}$ and $T(x_{\F_1}) = x_{\F_2}$. 
Therefore $T$ restricts to a bijection between the classes of nonzero self-adjoint partial isometries. 
We also have $T(-v) = -T(v)$ for every nonzero self-adjoint partial isometry $v \in M$ by $(b)$ of Proposition \ref{prop-face}.

We know that $T$ restricts to a bijection from $\E(M_{sa})$ ($=\{2p-1\mid p\in\P(M)\}$, which is the collection of self-adjoint unitaries in $M$) onto $\E(N_{sa})$. 
Since $u \in \E(M_{sa})$ is central if and only if $u$ is isolated in $\E(M_{sa})$ (see the last two paragraphs of the proof of Lemma \ref{lem-C*}), 
it follows that $T(1)$ is central in $N$.

Define $T_1 : S(M_{sa}) \to S(N_{sa})$ by $T_1(x) := T(1)^{-1} T(x)$, $x \in S(M_{sa})$. 
What we have to do is to show that the mapping $T_1$ admits a linear extension.
We first show that $T_1$ restricts to an orthoisomorphism from $\P(M)\setminus \{0\}$ onto $\P(N)\setminus \{0\}$. 
We already know that $T_1$ restricts to an order-preserving bijection from $\P(M)\setminus \{0\}$ onto $\P(N)\setminus \{0\}$. 
Hence it suffices to show $T_1(p^{\perp}) = T_1(p)^{\perp}$ for an arbitrary projection $p \in \P(M)\setminus \{0,1\}$.
Let $p \in \P(M)\setminus \{0,1\}$.
Then $T_1$ restricts to a bijection from $p + p^{\perp}\B_{M_{sa}}p^{\perp}$ onto $T_1(p) + T_1(p)^{\perp}\B_{N_{sa}}T_1(p)^{\perp}$. 
Identify $p + p^{\perp}\B_{M_{sa}}p^{\perp}$ with 
$p^{\perp}\B_{M_{sa}}p^{\perp} = \B_{(p^{\perp}Mp^{\perp})_{sa}}$ and 
$T_1(p) + T_1(p)^{\perp}\B_{N_{sa}}T_1(p)^{\perp}$ with $\B_{(T(p)^{\perp}NT(p)^{\perp})_{sa}}$.
It follows by Mankiewicz's theorem that 
\[
T_1(p) = T_1\left(\frac{1}{2} ((p-p^{\perp}) + 1)\right)
= \frac{1}{2}(T_1(p-p^{\perp}) + T_1(1)) = \frac{1}{2}(T_1(p-p^{\perp}) + 1)
\]
Similarly we obtain $T_1(p^{\perp}) = (T_1(p^{\perp}-p) + 1)/2$. 
Since $T_1(p-p^{\perp})=-T_1(p^{\perp}-p)$, we obtain $T_1(p) + T_1(p^{\perp}) = 1$. 
i.e.\ $T_1(p^{\perp}) = T_1(p)^{\perp}$.

Thus Proposition \ref{prop-sa1} implies that there exists a linear surjective isometry $\widetilde{T_1} : M_{sa} \to N_{sa}$ 
such that $T_1(p) = \widetilde{T_1}(p)$ for every $p \in \P(M)\setminus \{0\}$.
Let $p \in \P(M)\setminus \{0\}$. 
Then both $T_1$ and $\widetilde{T_1}$ restrict to surjective isometries 
from $p + p^{\perp}\B_{M_{sa}}p^{\perp}$ onto $T(p) + T(p)^{\perp}\B_{N_{sa}}T(p)^{\perp}$. 
Moreover, they coincide on $\{p_0 \in \P(M)\setminus\{0\} \mid p \leq p_0 \}$, 
which is total in $\B_{(p^{\perp}Mp^{\perp})_{sa}}$ identified with $p + p^{\perp}\B_{M_{sa}}p^{\perp}$.
Hence Mankiewicz's theorem implies that 
$T_1(x) = \widetilde{T_1}(x)$ for every $x \in p + p^{\perp}\B_{M_{sa}}p^{\perp}$.
Similarly we have $T_1(x) = \widetilde{T_1}(x)$ for every $x \in -p + p^{\perp}\B_{M_{sa}}p^{\perp}$.
By the functional calculus, we know that the set 
\[
\bigcup_{p \in \P(M)\setminus \{0\}}\left(\left(p + p^{\perp}\B_{M_{sa}}p^{\perp} \right)\cup \left(-p + p^{\perp}\B_{M_{sa}}p^{\perp}\right)\right)
\]
is norm-dense in $S(M_{sa})$. Thus we obtain 
$T_1(x) = \widetilde{T_1}(x)$ for every $x \in S(M_{sa})$.
\end{proof}

\begin{remark}
In \cite[Theorem 5.8]{P}, using the Bunce-Wright-Mackey-Gleason theorem instead of Dye's theorem, 
Peralta gave another way to show the preceding theorem. 
\end{remark}

In order to think about the space of self-adjoint elements in the preduals of von Neumann algebras, we again use the Hausdorff distance as in Section \ref{predual}.

\begin{lemma}\label{lem-sa}
Let $M$ be a von Neumann algebra of type I$_2$. Then for arbitrary maximal abelian projections $p, q \in M$ we have $2 \|p-q\| = \delta_H(p, q)$.
\end{lemma}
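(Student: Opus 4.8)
The plan is to reduce to the concrete case $M=M_2(A)$ with $A$ abelian, realize $A=L^{\infty}(X,\mu)$ so that $M$ becomes the direct integral $\int_X^{\oplus}M_2(\mathbb{C})\,d\mu(x)$, and then compute everything fiberwise. A maximal abelian projection $p\in M$ then corresponds to a measurable field $x\mapsto p(x)$ of projections in $M_2(\mathbb{C})$ that is of rank one for $\mu$-a.e.\ $x$: central support $1$ forces $p(x)\neq 0$ a.e., while abelianness of $pMp$ forces $\operatorname{rank}p(x)\le 1$ a.e. Given two maximal abelian projections $p,q$, define $\theta(x)\in[0,\pi/2]$ by $\cos^{2}\theta(x)=\operatorname{Tr}\bigl(p(x)q(x)\bigr)$. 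Since $\|P-Q\|=\sin\theta$ for rank-one projections $P,Q$ at angle $\theta$ (the traceless difference has determinant $-\sin^{2}\theta$), we get $\|p-q\|=\operatorname*{ess\,sup}_{x}\sin\theta(x)=\sin\Theta$, where $\Theta:=\operatorname*{ess\,sup}_{x}\theta(x)$, using that $\sin$ is a homeomorphism of $[0,\pi/2]$ onto $[0,1]$.

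Next I would pin down the face $\{p\}_{\prime}\subset\B_{M_{*sa}}$ (legitimate by $(f)$ of Theorem \ref{thm-AP2}, as $p$ is a self-adjoint partial isometry). Viewing $M_{*}$ as $\int_X^{\oplus}M_2(\mathbb{C})\,d\mu$ with norm $\|\varphi\|=\int_X\|F_{\varphi}(x)\|_{1}\,d\mu(x)$, where $F_{\varphi}$ is the $2\times2$ matrix field representing $\varphi$ and $\|\cdot\|_1$ is the trace norm, the inequality $\operatorname{Tr}\bigl(F(x)p(x)\bigr)\le\|F(x)\|_{1}$ together with its equality case shows that $\varphi\in\{p\}_{\prime}$ exactly when $F_{\varphi}(x)=t(x)p(x)$ a.e.\ for some probability density $t\ge 0$ (i.e.\ $\int_X t\,d\mu=1$); write this functional as $\varphi_{t}$. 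The crucial pointwise identity is that, for rank-one projections $P,Q$ at angle $\theta$ and $t,s\ge 0$, the self-adjoint matrix $tP-sQ$ has trace $t-s$ and determinant $-ts\sin^{2}\theta$, hence $\|tP-sQ\|_{1}=\sqrt{(t-s)^{2}+4ts\sin^{2}\theta}$. Consequently
\[
\|\varphi_{t}-\psi_{s}\|=\int_X\sqrt{\bigl(t(x)-s(x)\bigr)^{2}+4\,t(x)s(x)\sin^{2}\theta(x)}\;d\mu(x)
\]
for all $\varphi_{t}\in\{p\}_{\prime}$ and $\psi_{s}\in\{q\}_{\prime}$.

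For $\delta_{H}(p,q)\le 2\|p-q\|$: given $\varphi_{t}\in\{p\}_{\prime}$, choose $\psi_{t}\in\{q\}_{\prime}$ with the same density; the displayed formula gives $\|\varphi_{t}-\psi_{t}\|=\int_X 2t(x)\sin\theta(x)\,d\mu\le 2\sin\Theta$. Hence $\sup_{\varphi\in\{p\}_{\prime}}\inf_{\psi\in\{q\}_{\prime}}\|\varphi-\psi\|\le 2\sin\Theta$, and by symmetry so is the other one-sided supremum, giving $\delta_{H}(p,q)\le 2\sin\Theta$. For the reverse inequality, fix $\varepsilon>0$, let $E:=\{x:\theta(x)\ge\Theta-\varepsilon\}$, which has positive finite measure, and take $\varphi:=\varphi_{t}$ with $t=\mathbf{1}_{E}/\mu(E)$. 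For any $\psi_{s}\in\{q\}_{\prime}$, set $\beta:=4\sin^{2}(\Theta-\varepsilon)\in[0,4]$ and $\sigma:=\int_{E}s\,d\mu\in[0,1]$; using $\sin^{2}\theta\ge\beta/4$ on $E$ and $t=0$ off $E$,
\[
\|\varphi_{t}-\psi_{s}\|\ \ge\ \int_{E}\sqrt{(t-s)^{2}+\beta ts}\;d\mu\ +\ \int_{X\setminus E}s\,d\mu .
\]
Since $0\le\beta\le4$, the quadratic form $a^{2}+b^{2}+(\beta-2)ab$ is positive semidefinite, so $Q(a,b):=\sqrt{(a-b)^{2}+\beta ab}$ is a seminorm on $\mathbb{R}^{2}$, in particular convex and positively homogeneous; Jensen's inequality for the probability measure $\mu(E)^{-1}\mu|_{E}$ yields $\int_{E}Q(t,s)\,d\mu\ge Q(1,\sigma)=\sqrt{(1-\sigma)^{2}+\beta\sigma}$. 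Therefore $\|\varphi_{t}-\psi_{s}\|\ge\sqrt{(1-\sigma)^{2}+\beta\sigma}+(1-\sigma)$, and since $\sigma\mapsto\sqrt{(1-\sigma)^{2}+\beta\sigma}+(1-\sigma)$ is non-increasing on $[0,1]$ with value $\sqrt{\beta}$ at $\sigma=1$ (this is where $\beta\le4$ enters), we get $\inf_{\psi\in\{q\}_{\prime}}\|\varphi_{t}-\psi\|\ge\sqrt{\beta}=2\sin(\Theta-\varepsilon)$. Letting $\varepsilon\to0$ gives $\delta_{H}(p,q)\ge 2\sin\Theta=2\|p-q\|$, and with the first inequality this proves $2\|p-q\|=\delta_{H}(p,q)$.

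The routine-but-fiddly part that I expect to absorb most of the work is the measurable-field bookkeeping: measurability of $p(\cdot),q(\cdot),\theta(\cdot)$, the identification of $M_{*}$ together with its $L^{1}$-trace-norm, and the exact description of $\{p\}_{\prime}$. By contrast, the two genuinely idea-bearing moves are small and clean: matching densities ($\psi_{t}$ for $\varphi_{t}$) for the easy inequality, and exploiting that $0\le\beta\le4$ makes $Q$ a seminorm (hence Jensen applicable) for the hard one.
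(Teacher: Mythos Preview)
Your argument is correct. The identification of $\{p\}_{\prime}$ with the set of functionals $\varphi_t$ given by densities $t$ against the rank-one field $p(\cdot)$ is right (for a projection $p$, the face $\{p\}_{\prime}$ in $\B_{M_*}$ automatically lands in the positive cone, so there is no discrepancy between the $M_*$ and $M_{*sa}$ interpretations). The trace-norm formula $\|tP-sQ\|_1=\sqrt{(t-s)^2+4ts\sin^2\theta}$ is verified by the trace/determinant computation you give, and the Jensen step is legitimate precisely because $0\le\beta\le4$ makes $Q(a,b)=\sqrt{a^2+b^2+(\beta-2)ab}$ the square root of a positive semidefinite form, hence a seminorm. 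The monotonicity of $\sigma\mapsto\sqrt{(1-\sigma)^2+\beta\sigma}+(1-\sigma)$ on $[0,1]$ also checks out (squaring the derivative condition reduces to $\beta\le4$). Two small points worth tightening: the set $E$ need not have finite measure, but semifiniteness of $\mu$ lets you pass to a finite-measure subset without loss; and you should note the trivial case $\Theta=0$ separately so that $\Theta-\varepsilon>0$.

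Your route, however, is genuinely different from the paper's. The paper does not disintegrate over the spectrum of $A$ at all. Instead it first observes that $q\mapsto\delta_H(p,q)$ is norm-continuous (via the explicit approximant $\varphi(q'\cdot q')/\|\varphi(q'\cdot q')\|$), and then reduces to the dense class of ``step'' projections $q=\sum_n (1+|c_n|^2)^{-1}\begin{pmatrix}q_n & c_nq_n\\ \overline{c_n}q_n & |c_n|^2 q_n\end{pmatrix}$ with $q_1+\cdots+q_N=1$ in $A$. For such $q$ both $\|p-q\|$ and $\delta_H(p,q)$ split as a maximum over the finitely many central summands, and each summand is a ``constant angle'' case where the identity $\delta_H=2\|p-q\|$ is read off directly. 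So the paper trades your measure-theoretic bookkeeping and Jensen trick for a continuity-plus-density reduction to a finite direct sum. What your approach buys is an exact integral formula for $\|\varphi_t-\psi_s\|$ valid for arbitrary $p,q$, and the seminorm/Jensen device is a clean, reusable idea; what the paper's approach buys is brevity and the avoidance of direct-integral formalism entirely.
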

\begin{proof}
As in the proof of Proposition \ref{prop-sa1}, we can decompose $M$ as $M = M_2(A)$ using an abelian von Neumann algebra $A$. We may assume $p = 
\begin{pmatrix}
1 & 0 \\
0 & 0
\end{pmatrix} \in \P(M_2(A))$. 

Let $q_1, q_2 \in\P(M_2(A))$. If $\|q_1-q_2\|$ is sufficiently small, 
for every $\varphi\in \{q_1\}_{\prime}$, 
it is not difficult to see that 
\[
\frac{\varphi(q_2\cdot q_2)}{\|\varphi(q_2\cdot q_2)\|} \in\{q_2\}_{\prime}\quad\text{and}\quad
\left\| \varphi-\frac{\varphi(q_2\cdot q_2)}{\|\varphi(q_2\cdot q_2)\|}\right\| \,\text{is small.}
\] 
Thus the mapping $\P(M_2(A)) \ni q \mapsto \delta_H(p, q)$ is continuous in the norm metric. 

Hence we may also assume that $q$ can be decomposed to the following form: 
there exist $N \in \mathbb{N}$, $q_1, \ldots, q_N \in \P(A)$ and $c_1, \ldots, c_N \in \mathbb{C}$ such that
\[
\sum_{n = 1}^{N} q_n = 1 \in A,\quad
q = \sum_{n = 1}^{N}
\frac{1}{1 + |c_n|^2}
\begin{pmatrix}
q_n & c_nq_n \\
\overline{c_n}q_n & |c_n|^2q_n
\end{pmatrix} \in M_2(A).
\]
In this case, we easily have 
\[
\|p - q\| = \max_{1 \leq n \leq N}
\left\|
\begin{pmatrix}
1 & 0 \\
0 & 0
\end{pmatrix} - 
\frac{1}{1 + |c_n|^2}
\begin{pmatrix}
1 & c_n \\
\overline{c_n} & |c_n|^2
\end{pmatrix}\right\|
= \max_{1 \leq n \leq N} \frac{|c_n|}{\sqrt{1 + |c_n|^2}}
\]
and
\[
\delta_H(p, q) = \max_{1 \leq n \leq N}
\delta_H\left(
\begin{pmatrix}
1 & 0 \\
0 & 0
\end{pmatrix}, 
\frac{1}{1 + |c_n|^2}
\begin{pmatrix}
1 & c_n \\
\overline{c_n} & |c_n|^2
\end{pmatrix} \right)
= \max_{1 \leq n \leq N} \frac{2|c_n|}{\sqrt{1 + |c_n|^2}}.
\] 
In particular, we obtain $2\|p-q\| = \delta_H(p,q)$.
\end{proof}

Let us recall the following well-known fact. 
\begin{lemma}
Let $M$ be a von Neumann algebra and $\varphi, \psi$ be normal states on $M$. 
Then we have $\|\varphi-\psi\| = 2$ if and only if $\operatorname{supp}\varphi\perp \operatorname{supp}\psi$. 
\end{lemma}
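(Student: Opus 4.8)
The plan is to phrase everything in terms of the support projections $p:=\operatorname{supp}\varphi$ and $q:=\operatorname{supp}\psi$ in $M$, treating the two implications separately. The implication ``$\operatorname{supp}\varphi\perp\operatorname{supp}\psi\Rightarrow\|\varphi-\psi\|=2$'' is the easy one. Assuming $pq=0$ we have $p\le 1-q$ and $q\le 1-p$, so $\psi(p)\le\psi(1-q)=0$ and $\varphi(q)\le\varphi(1-p)=0$. The self-adjoint element $p-q$, being the difference of two orthogonal projections, satisfies $\|p-q\|\le 1$, while $(\varphi-\psi)(p-q)=\bigl(\varphi(p)-\varphi(q)\bigr)-\bigl(\psi(p)-\psi(q)\bigr)=1-(-1)=2$. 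Together with the trivial bound $\|\varphi-\psi\|\le\|\varphi\|+\|\psi\|=2$ this gives $\|\varphi-\psi\|=2$.

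For the converse I would invoke the Jordan decomposition of the self-adjoint normal functional $\rho:=\varphi-\psi\in M_{*sa}$: there are positive normal functionals $\rho_{\pm}$ on $M$ with $\rho=\rho_{+}-\rho_{-}$, with $\operatorname{supp}\rho_{+}\perp\operatorname{supp}\rho_{-}$, and with $\|\rho\|=\|\rho_{+}\|+\|\rho_{-}\|=\rho_{+}(1)+\rho_{-}(1)$. Write $f:=\operatorname{supp}\rho_{+}$. Since $f$ is orthogonal to $\operatorname{supp}\rho_{-}$ we get $\rho_{-}(f)=0$, hence $\rho_{+}(1)=\rho_{+}(f)=\rho(f)=\varphi(f)-\psi(f)$; and since $\varphi,\psi$ are states, $\rho_{+}(1)-\rho_{-}(1)=\rho(1)=0$, so $\rho_{-}(1)=\rho_{+}(1)$ as well. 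Therefore $2=\|\rho\|=2\rho_{+}(1)=2\bigl(\varphi(f)-\psi(f)\bigr)\le 2\varphi(f)\le 2$, which forces $\varphi(f)=1$ and $\psi(f)=0$. The first equality gives $\varphi(1-f)=0$, i.e.\ $p\le f$; the second gives $q\le 1-f$. Hence $pq=0$, that is $\operatorname{supp}\varphi\perp\operatorname{supp}\psi$.

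The only point needing attention is the precise form of the Jordan decomposition that I use, namely norm-additivity together with the orthogonality of the supports of the positive and negative parts; but these are classical facts about normal functionals on von Neumann algebras, so there is no genuine obstacle here — the rest is a short computation with states and their support projections.
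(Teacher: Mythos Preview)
Your proof is correct and follows essentially the same route as the paper. For the forward implication the paper simply notes that the norm of the self-adjoint normal functional $\varphi-\psi$ is attained at a self-adjoint partial isometry $v=p-q$ (which is just another way of packaging the Jordan decomposition you invoke), and then deduces $\varphi(p)=1$, $\psi(q)=1$ to get orthogonal supports; your projection $f=\operatorname{supp}\rho_{+}$ plays the role of the paper's $p$.
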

\begin{proof}
Suppose $\|\varphi-\psi\| = 2$. 
There exists a self-adjoint partial isometry $v\in M$ such that $\varphi(v)-\psi(v) = 2$. 
We decompose as $v= p-q$, where $p, q\in\P(M)$ are mutually orthogonal projections. 
Since $\varphi$ and $\psi$ are states, by the equation $\varphi(v)-\psi(v) = 2$, 
we have $\varphi(p) = 1$ and $\psi(q) = 1$, thus $\operatorname{supp}\varphi \leq p \perp q \geq \operatorname{supp}\psi$.  
The other implication is clear. 
\end{proof}

Now we are ready to prove the following theorem.

\begin{theorem}\label{thm-sa2}
Let $M, N$ be von Neumann algebras and $T : S(M_{*sa}) \to S(N_{*sa})$ be a surjective isometry. 
Then there exists a unique linear surjective isometry $\widetilde{T} : M_{*sa} \to N_{*sa}$ which extends $T$.
\end{theorem}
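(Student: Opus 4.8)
The plan is to run the argument of Section~\ref{predual} in the predual‐of‐self‐adjoint setting: reconstruct the projection lattice of $M$ from the facial structure, invoke Dye's theorem through Proposition~\ref{prop-sa1}, dualize the resulting Jordan isomorphism, and identify the linear map so obtained with $T$ on the sphere. By Proposition~\ref{prop-T} and $(f)$ of Theorem~\ref{thm-AP2}, $T$ induces a bijection $T_1$ from the nonzero self‐adjoint partial isometries of $M$ onto those of $N$, determined by $T(\{v\}_{\prime})=\{T_1(v)\}_{\prime}$; it satisfies $T_1(-v)=-T_1(v)$ by Proposition~\ref{prop-face}$(b)$, it preserves inclusion of faces (since $T$ is a bijection of $S(M_{*sa})$ carrying faces to faces), and it is an isometry for $\delta_H(v,w):=d_H(\{v\}_{\prime},\{w\}_{\prime})$, because $T$ preserves the Hausdorff distance between faces. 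Writing a self‐adjoint partial isometry as $p-q$ with orthogonal projections $p,q$, the self‐adjoint unitaries are the $2p-1$ (these give the maximal faces), and $\{v\}_{\prime}$ is contained in the normal state space $\{1\}_{\prime}$ exactly when $v$ is a projection.

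Next I would normalize. A support computation shows that for nonzero projections $\delta_H(2p-1,2q-1)=2\iff\|p-q\|=1$, and that $\|p-q\|=1$ holds for every $q\ne p$ exactly when $p$ is central (for the nontrivial direction conjugate $p$ by $e^{ith}$ with $[h,p]\ne0$ and $t$ small, as in the last part of the proof of Lemma~\ref{lem-C*}). Hence $T_1$ sends central symmetries to central ones, so $T_1(1)=2z_0-1$ with $z_0$ central in $N$; composing $T$ with the linear surjective isometry of $N_{*sa}$ (which splits as an $\ell^1$‐sum $(z_0N)_{*sa}\oplus(z_0^{\perp}N)_{*sa}$) that flips the sign of the $z_0^{\perp}$‐component, I may assume $T_1(1)=1$, i.e.\ $T$ maps the normal state space of $M$ onto that of $N$; this case suffices. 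Now ``$\{v\}_{\prime}\subset\{1\}_{\prime}\iff v\in\P(M)$'' is preserved, so $T_1$ restricts to an order‐preserving bijection $\P(M)\setminus\{0\}\to\P(N)\setminus\{0\}$ with $T_1(p^{\perp})=T_1(p)^{\perp}$. Using the lemma that two normal states lie at distance $2$ iff their supports are orthogonal, plus a vector/compression argument for the converse, one obtains $pq=0\iff\operatorname{dist}(\{p\}_{\prime},\{q\}_{\prime})=2$ for nonzero $p,q$; since $T$ preserves distances of faces, $T_1$ is an orthoisomorphism on $\P(M)\setminus\{0\}$. It carries the type~I$_2$ summand onto the type~I$_2$ summand and maximal abelian projections to maximal abelian projections, and by Lemma~\ref{lem-sa} together with the preservation of the Hausdorff distance of faces one gets $\|p-q\|=\|T_1(p)-T_1(q)\|$ there; thus Proposition~\ref{prop-sa1} yields a linear surjective isometry $\widetilde{T_1}:M_{sa}\to N_{sa}$ agreeing with $T_1$ on $\P(M)\setminus\{0\}$. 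Since $\widetilde{T_1}(1)=1$, $\widetilde{T_1}$ is a Jordan $^{*}$‐isomorphism, hence $\sigma$‐weakly continuous, and (using that $\{p-q\}_{\prime}$ is the least face containing $\{p\}_{\prime}$ and $\{-q\}_{\prime}$) it agrees with $T_1$ on all self‐adjoint partial isometries.

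Then I would dualize: let $\widetilde{T}:M_{*sa}\to N_{*sa}$ be the inverse of the isometric surjective predual map $\psi\mapsto\psi\circ\widetilde{T_1}$. Checking on functionals gives $\widetilde{T}(\{v\}_{\prime})=\{T_1(v)\}_{\prime}=T(\{v\}_{\prime})$ for every nonzero self‐adjoint partial isometry $v$, so $\Phi:=\widetilde{T}^{-1}\circ T$ is a surjective isometry of $S(M_{*sa})$ fixing every norm‐closed proper face setwise, with $\Phi(-\F)=-\Phi(\F)$; it remains to show $\Phi=\mathrm{id}$, for then $\widetilde{T}$, composed back with the sign‐flip of the normalization step, is the required (and by homogeneity unique) linear extension of $T$. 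The restriction of $\Phi$ to the normal state space of $M$ is a surjective isometry fixing all of its faces $\{p\}_{\prime}$; restricting to corners $(\operatorname{supp}\varphi)M(\operatorname{supp}\varphi)$ to make a given state $\varphi$ faithful, the final argument in the proof of Theorem~\ref{thm-predual} (Lemmas~\ref{lem-predual3} and~\ref{lem-predual4}, which involve only normal states and the metric) applies verbatim and gives $\Phi(\varphi)=\varphi$; applying this inside every corner $qMq$ and using $\Phi(-\omega)=-\Phi(\omega)$, $\Phi$ fixes every positive and every negative norm‐one functional.

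Finally, for a general $\varphi=\varphi_{+}-\varphi_{-}\in S(M_{*sa})$ with $\varphi_{\pm}\ne0$ and $r_{\pm}=\operatorname{supp}\varphi_{\pm}$: $\Phi$ fixes the face $\{r_{+}-r_{-}\}_{\prime}$, and comparing the distances from $\Phi(\varphi)$ and from $\varphi$ to the (fixed) states $\varphi_{+}/\|\varphi_{+}\|$ and $-\varphi_{-}/\|\varphi_{-}\|$ forces $\|\Phi(\varphi)_{\pm}\|=\|\varphi_{\pm}\|$; then comparing $\|\Phi(\varphi)-\rho\|=\|\varphi-\rho\|$ over all normal states $\rho$ supported under $r_{+}$ (all fixed by $\Phi$), and using that a positive functional $a$ is determined by its distances to the normal states because $\{\rho\ \text{state}:\|a-\rho\|=1-\|a\|\}=a+(1-\|a\|)\cdot(\text{state space})$, one gets $\Phi(\varphi)_{+}=\varphi_{+}$, and symmetrically $\Phi(\varphi)_{-}=\varphi_{-}$, so $\Phi(\varphi)=\varphi$ and $\Phi=\mathrm{id}$. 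The main work — and the most likely source of difficulty — is this last step (pushing the Theorem~\ref{thm-predual} argument from normal states to arbitrary self‐adjoint functionals) together with the exact Hausdorff‐distance evaluations between faces used in the normalization and orthoisomorphism steps; the rest is a transcription of the methods of Sections~\ref{predual} and~\ref{sa}.
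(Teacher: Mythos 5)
Your proposal is correct and follows essentially the same route as the paper: normalize by the central self-adjoint unitary corresponding to $T(\{1\}_{\prime})$, extract an orthoisomorphism of projection lattices from the criterion $pq=0\Leftrightarrow\operatorname{dist}(\{p\}_{\prime},\{q\}_{\prime})=2$, handle the type I$_2$ summand via Lemma \ref{lem-sa}, apply Proposition \ref{prop-sa1}, and dualize. Your final paragraphs actually spell out more carefully than the paper (which only says ``using an argument similar to the proof of Theorem \ref{thm-predual}'') how to pass from normal states to general self-adjoint functionals via the Jordan decomposition, and that elaboration is sound.
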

\begin{proof}
We know $T(\{1\}_{\prime})$ is a maximal convex set in $S(N_{*sa})$, so it can be written as $\{u\}_{\prime}$, where $u \in N$ is a self-adjoint unitary. 
Consider the space $\E(M_{sa})$ endowed with the metric $\delta_H$. 
By Lemma \ref{lem-predual1}, this metric is equal to the norm metric. 
Thus the same discussion as in the second paragraph in the proof of Theorem \ref{thm-sa1} shows that $u$ is central.

It suffices to show that the surjective isometry $T_1$ defined by $T_1(\varphi) := (T\varphi)(\,\cdot\, u)$, $\varphi \in S(M_{*sa})$ admits a linear isometric extension.
We can define $T_2 : \P(M)\setminus \{0\} \to \P(N)\setminus \{0\}$ by
$T_1(\{p\}_{\prime}) = \{T_2(p)\}_{\prime}$, $p \in \P(M)\setminus \{0\}$.

By the preceding lemma, it is easy to see that for $p, q \in \P(M)\setminus\{0\}$, $pq = 0 \Longleftrightarrow \operatorname{dist} (\{p\}_{\prime}, \{q\}_{\prime}) = 2$. 
It follows that $T_2$ is an orthoisomorphism. 
 
Since every orthoisomorphism restricts to a bijection between the classes of maximal abelian projections of the type I$_2$ summands, 
the preceding lemma implies that $\|p-q\| = \|T_2(p) - T_2(q)\|$ for arbitrary maximal abelian projections $p, q$ in the I$_2$ summand of $M$. 
Therefore by Proposition \ref{prop-sa1}, there exists a linear surjective isometry $\widetilde{T_2} : M_{sa} \to N_{sa}$ 
such that $T_2(p) = \widetilde{T_2}(p)$ for every $p\in \P(M)\setminus \{0\}$.
Then we can show that $(\widetilde{T_2}_*)^{-1}$ is the linear surjective isometry we wanted, using an argument similar to the proof of Theorem \ref{thm-predual}.  
\end{proof}

Like Corollary \ref{cor-predual}, we have the following corollary.
\begin{corollary}
Let $A$ and $B$ be C$^*$-algebras and $T: S(A^*_{sa}) \to S(B^*_{sa})$ be a surjective isometry. (We do not assume $A$ or $B$ is unital.) 
Then there exists a unique linear surjective isometry $\widetilde{T}: A^*_{sa} \to B^*_{sa}$ which extends $T$. 
\end{corollary}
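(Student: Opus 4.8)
The plan is to reduce directly to Theorem \ref{thm-sa2} by passing to enveloping von Neumann algebras, exactly as in the proof of Corollary \ref{cor-predual}. Recall that for a C$^*$-algebra $A$, the bidual $A^{**}$ carries a canonical von Neumann algebra structure (the enveloping von Neumann algebra), and the canonical embedding identifies $A^*$ isometrically with the predual $(A^{**})_*$: every $\varphi \in A^*$ extends uniquely to a $\sigma$-weakly continuous functional on $A^{**}$, and this extension has the same norm. First I would record that this identification is compatible with the involution, so that it restricts to an isometric real-linear identification of $A^*_{sa}$ with $(A^{**})_{*sa}$; indeed, $\varphi \in A^*$ is self-adjoint precisely when its normal extension to $A^{**}$ is self-adjoint, since $A$ is $\sigma$-weakly dense in $A^{**}$ and the involution is $\sigma$-weakly continuous. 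The same applies to $B$ and $B^{**}$.

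Granting this, the given surjective isometry $T : S(A^*_{sa}) \to S(B^*_{sa})$ is nothing but a surjective isometry $S((A^{**})_{*sa}) \to S((B^{**})_{*sa})$ between the unit spheres of the self-adjoint parts of the preduals of the von Neumann algebras $M := A^{**}$ and $N := B^{**}$. Theorem \ref{thm-sa2} then furnishes a unique linear surjective isometry $\widetilde{T} : (A^{**})_{*sa} \to (B^{**})_{*sa}$, that is, a unique linear surjective isometry $\widetilde{T} : A^*_{sa} \to B^*_{sa}$, extending $T$. Uniqueness is inherited from the uniqueness clause of Theorem \ref{thm-sa2}.

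There is essentially no obstacle here: the statement is a formal corollary of Theorem \ref{thm-sa2}, and the only point requiring a word of justification is the (standard) identification $A^* \cong (A^{**})_*$ together with its compatibility with passing to self-adjoint parts. Accordingly, I would keep the written proof to two or three lines, just as was done for Corollary \ref{cor-predual}.
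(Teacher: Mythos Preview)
Your proposal is correct and matches the paper's approach exactly: the paper simply states ``Like Corollary \ref{cor-predual}'' and leaves the reader to pass to the enveloping von Neumann algebras $A^{**}$ and $B^{**}$ and apply Theorem \ref{thm-sa2}. Your extra sentence on why the identification $A^* \cong (A^{**})_*$ respects self-adjoint parts is a harmless elaboration of what the paper takes for granted.
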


A similar discussion can be applied to prove the next theorem. 
\begin{theorem}
Let $M$ and $N$ be von Neumann algebras. 
\begin{enumerate}[$(a)$]
\item Suppose $T: S(M_*) \cap M_{*+} \to S(N_*) \cap N_{*+}$ is a surjective isometry between normal state spaces. Then there exists a unique linear surjective isometry $\widetilde{T}: M_{*sa} \to N_{*sa}$ which extends $T$.
\item Suppose the dimension of $N$ is larger than one and $T: \B_{M_*} \cap M_{*+} \to \B_{N_*} \cap N_{*+}$ is a surjective isometry between normal quasi-state spaces. Then there exists a unique linear surjective isometry $\widetilde{T}: M_{*sa} \to N_{*sa}$ which extends $T$.
\end{enumerate}
\end{theorem}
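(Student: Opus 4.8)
The plan is to run the scheme of Theorems \ref{thm-predual} and \ref{thm-sa2}: extract an orthoisomorphism of the projection lattices from the metric of the (quasi-)state space, promote it to a linear isometry via Proposition \ref{prop-sa1}, pass to the predual, and then identify the resulting linear map with $T$ by checking that a suitable composition is the identity. Write $\mathcal{S}_M:=S(M_*)\cap M_{*+}$ and $\mathcal{Q}_M:=\B_{M_*}\cap M_{*+}$. First I would record that $\mathcal{S}_M=\{\varphi\in\B_{M_{*sa}}\mid\varphi(1)=1\}$ is a face of $\B_{M_{*sa}}$, so by $(f)$ of Theorem \ref{thm-AP2} the norm-closed proper faces of $\mathcal{S}_M$ are precisely the sets $F_p:=\{p\}_{\prime}=\{\varphi\in\mathcal{S}_M\mid\varphi(p)=1\}$ with $p\in\P(M)\setminus\{0\}$.

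For part $(a)$ the crucial point is that these faces are metrically detectable. For $C\subseteq\mathcal{S}_M$ put $C^{*}:=\{\varphi\in\mathcal{S}_M\mid\|\varphi-\psi\|=2\text{ for all }\psi\in C\}$. By the lemma preceding Theorem \ref{thm-sa2}, $\|\varphi-\psi\|=2$ is equivalent to $\operatorname{supp}\varphi\perp\operatorname{supp}\psi$, so $C^{*}=F_{r^{\perp}}$ with $r:=\bigvee_{\psi\in C}\operatorname{supp}\psi$; in particular $C^{*}$ is always a face, $(F_p)^{*}=F_{p^{\perp}}$, and $(F_p)^{**}=F_p$. Since $T(C^{*})=(T(C))^{*}$ for a surjective isometry $T$ of $\mathcal{S}_M$ onto $\mathcal{S}_N$, it follows that $T(F_p)=(T(F_{p^{\perp}}))^{*}$ is a norm-closed proper face of $\mathcal{S}_N$, so there is a bijection $T_2:\P(M)\setminus\{0\}\to\P(N)\setminus\{0\}$ with $T(F_p)=F_{T_2(p)}$; it is order preserving because $T$ is a bijection, and an orthoisomorphism because $p\perp q$ is equivalent to $F_q\subseteq(F_p)^{*}$. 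Next, since $\{p\}_{\prime}=F_p$, we have $\delta_H(p,q)=d_H(F_p,F_q)$, which $T$ preserves; combined with Lemma \ref{lem-sa} (applied in the type I$_2$ summand, in which $T_2$ restricts to a bijection between maximal abelian projections) this gives $\|p-q\|=\|T_2(p)-T_2(q)\|$ for maximal abelian projections $p,q$ of the type I$_2$ part of $M$. Hence Proposition \ref{prop-sa1} produces a linear surjective isometry $\widetilde{T_2}:M_{sa}\to N_{sa}$ with $\widetilde{T_2}(p)=T_2(p)$ for all $p\in\P(M)\setminus\{0\}$; as in Section \ref{predual} it is $\sigma$-weakly continuous, hence is the adjoint of a linear surjective isometry $(\widetilde{T_2})_{*}:N_{*sa}\to M_{*sa}$ carrying $\mathcal{S}_N$ onto $\mathcal{S}_M$.

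It then remains to show $\widetilde{T}:=((\widetilde{T_2})_{*})^{-1}$ restricts to $T$; equivalently, that $\Phi:=(\widetilde{T_2})_{*}\circ T$, a surjective isometry of $\mathcal{S}_M$, is the identity. A short computation gives $\Phi(F_p)=F_p$ for all $p$, which is exactly the input for the closing argument in the proof of Theorem \ref{thm-predual}: passing to the corner $(\operatorname{supp}\varphi)M(\operatorname{supp}\varphi)$ one reduces to faithful normal states $\varphi$, and then the Jordan decomposition of $\varphi-\Phi(\varphi)$ together with Lemmas \ref{lem-predual3} and \ref{lem-predual4} forces $\Phi(\varphi)=\varphi$. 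Thus $\Phi=\operatorname{id}$, $\widetilde{T}$ extends $T$, and uniqueness is clear since $\mathcal{S}_M$ spans $M_{*sa}$. This completes $(a)$. The step I expect to be the main obstacle, here and throughout Tingley's problem, is precisely this final identification of the candidate linear map with the given nonlinear isometry; in $(a)$ it is imported essentially verbatim from the proof of Theorem \ref{thm-predual}, and in $(b)$ (below) it takes the form of establishing $T(0)=0$, which is the one point where the hypothesis $\dim N>1$ is indispensable (for $\dim N=1$ the quasi-state space is $[0,1]$ and $t\mapsto 1-t$ is a surjective isometry admitting no linear extension).

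For part $(b)$ I would first prove $T(0)=0$: when $\dim N>1$ no normal pure state of $N$ is faithful, so every extreme point $\neq 0$ of $\mathcal{Q}_N$ lies at distance $2$ from some point of $\mathcal{Q}_N$, while $0$ is within distance $1$ of all of $\mathcal{Q}_N$; combining this with the fact that $\{\zeta\in\mathcal{Q}_N\mid d(\zeta,T(0))=1\}$ is the $T$-image of $\mathcal{S}_M$ forces $T(0)=0$. Then $\|T\omega\|=d(T\omega,0)=\|\omega\|$, so $T$ carries $\mathcal{S}_M$ onto $\mathcal{S}_N$; part $(a)$ now supplies a linear surjective isometry $\widetilde{T}:M_{*sa}\to N_{*sa}$ agreeing with $T$ on $\mathcal{S}_M$, and since $\mathcal{Q}_M=\bigcup_{0\le t\le1}t\,\mathcal{S}_M$ this $\widetilde{T}$ maps $\mathcal{Q}_M$ onto $\mathcal{Q}_N$. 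Finally $\Psi:=\widetilde{T}^{-1}\circ T$ is an isometry of $\mathcal{Q}_M$ fixing $0$ and every state; for $\omega\in\mathcal{Q}_M$ and any state $\sigma$ the element $b:=\omega+(1-\|\omega\|)\sigma$ is a state, and $\|\Psi(\omega)\|+\|\Psi(\omega)-b\|=\|\omega\|+(1-\|\omega\|)=\|b\|$, so, $\Psi(\omega)$ and $b$ being positive, $\Psi(\omega)\le b$; letting $\sigma$ vary gives $\Psi(\omega)\le\omega$, and symmetrically $\omega\le\Psi(\omega)$, whence $\Psi=\operatorname{id}$ and $\widetilde{T}$ extends $T$ on $\mathcal{Q}_M$. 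Uniqueness again follows since $\mathcal{Q}_M$ spans $M_{*sa}$.
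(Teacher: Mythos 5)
Part $(a)$ and the overall architecture of part $(b)$ track the paper's proof closely: you extract the orthoisomorphism from the distance-$2$ criterion $\|\varphi-\psi\|=2\Leftrightarrow\operatorname{supp}\varphi\perp\operatorname{supp}\psi$, verify the type I$_2$ hypothesis via Lemma \ref{lem-sa} and the Hausdorff distance, apply Proposition \ref{prop-sa1}, and then identify the candidate with $T$ by the argument of Theorem \ref{thm-predual}; this is exactly what the paper does (it leaves the last step as ``use discussions in Section \ref{predual}'', and your reduction to faithful states via the support corner is the right way to fill it in). The closing step of $(b)$, deducing $\Psi(\omega)\le\omega$ from $\Psi(\omega)\le\omega+(1-\|\omega\|)\sigma$ for all normal states $\sigma$, is also correct, though ``letting $\sigma$ vary'' deserves one more line: a single $\sigma$ does not suffice, but taking $\sigma$ supported under $\operatorname{supp}((\Psi(\omega)-\omega)_-)$, hence orthogonally to $\operatorname{supp}((\Psi(\omega)-\omega)_+)$, does.

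The genuine gap is your proof that $T(0)=0$ in part $(b)$. First, the nonzero extreme points of $\B_{N_*}\cap N_{*+}$ are the normal pure states of $N$, and a diffuse von Neumann algebra (a type II$_1$ factor, or $L^\infty[0,1]$) has none at all, so for such $N$ your argument is vacuous. Second, even when extreme points abound, the properties you isolate do not characterize $0$: in $N=M_2(\mathbb{C})$ the quasi-state $\tfrac12\tau$ (with $\tau$ the tracial state) lies within distance $1$ of the entire quasi-state space and at distance exactly $1$ from every pure state, so your criteria do not separate it from $0$; excluding it by comparing $\{\zeta\mid d(\zeta,T(0))=1\}$ with the $T$-image of the normal state space of $M$ would require a further geometric argument that you have not supplied. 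The paper instead invokes \cite[Lemma 3.6]{LNW}, which gives $T(0)=0$ for all $N$ other than $\mathbb{C}\oplus\mathbb{C}$ and $M_2(\mathbb{C})$, and treats those two algebras separately; the fact that precisely these low-dimensional cases must be excluded there is a warning that no short extreme-point argument of the kind you propose will work uniformly. You need either to cite that lemma (and handle the two exceptional algebras directly) or to give a complete replacement for this step.
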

\begin{proof}
$(a)$ For $p \in \P(M)\setminus\{0\}$, we easily see that 
\[
\{p\}_{\prime} = \{\varphi\in S(M_*) \cap M_{*+} \mid \|\varphi-\psi\| = 2 \,\,\text{for any}\,\, \psi \in \{p^{\perp}\}_{\prime}\}.
\] 
It follows that 
\[
\begin{split}
T(\{p\}_{\prime}) &= \{\varphi \in S(N_*) \cap N_{*+} \mid \|\varphi-T(\psi)\|=2\,\,\text{for any}\,\,\psi \in \{p^{\perp}\}_{\prime}\} \\
&= \{\varphi \in S(N_*)\cap N_{*+} \mid \operatorname{supp}\varphi \perp \operatorname{supp}T(\psi) \,\, \text{for any}\,\, \psi\in \{p^{\perp}\}_{\prime}\}. 
\end{split}
\] 
Thus there exists an orthoisomorphism $T_1: \P(M)\setminus\{0\} \to \P(N)\setminus\{0\}$ 
which satisfies $T(\{p\}_{\prime}) = \{T_1(p)\}_{\prime}$ for every $p \in \P(M)\setminus\{0\}$. 
Then Lemma \ref{lem-sa} and Proposition \ref{prop-sa1} show that $T_1$ admits a unique linear surjective isometric extension $\widetilde{T_1}: M_{sa} \to N_{sa}$.
Use discussions in Section \ref{predual} to show that this linear mapping is what we wanted.\smallskip

\noindent
$(b)$ First we see $T(0) = 0$. By \cite[Lemma 3.6]{LNW}, we have $T(0) = 0$ unless $N$ is equal to $\mathbb{C}\oplus\mathbb{C}$ or $M_2(\mathbb{C})$. 
It is easy to show $T(0) = 0$ if $N=\mathbb{C}\oplus\mathbb{C}$ or $N=M_2(\mathbb{C})$. 
Thus $T$ restricts to a bijection from $S(M_*) \cap M_{*+}$ onto $S(N_*) \cap N_{*+}$. 
Hence $(a)$ shows that there exists a unique linear surjective isometry $\widetilde{T}: M_{sa} \to N_{sa}$ 
such that $\widetilde{T}(\varphi)=T(\varphi)$ for all $\varphi \in S(M_*) \cap M_{*+}$.
It suffices to show that $\Phi:=\widetilde{T}^{-1} \circ T : \B_{M_*} \cap M_{*+} \to \B_{M_*} \cap M_{*+}$ is equal to the identity mapping. 
Let $\varphi \in B_{M_*} \cap M_{*+}$. 
Since $\Phi$ is a surjective isometry and $\Phi(\psi) = \psi$ for every $\psi\in  S(M_*) \cap M_{*+}$, the set
\[
\begin{split}
\mathcal{S}_{\varphi}&=\{\psi \in S(M_*) \cap M_{*+} \mid \varphi\leq\psi\}\\
&=\{\psi \in S(M_*) \cap M_{*+} \mid \|\psi-\varphi\|= 1-\|\varphi\|\}
\end{split}
\]
is equal to
\[
\begin{split}
\mathcal{S}_{\Phi(\varphi)}&=\{\psi \in S(M_*) \cap M_{*+} \mid \Phi(\varphi)\leq\psi\}\\
&=\{\psi \in S(M_*) \cap M_{*+} \mid \|\psi-\Phi(\varphi)\| = 1- \|\Phi(\varphi)\|\}.
\end{split}
\]
Since $\{\varphi_0 \in \B_{M_*} \cap M_{*+} \mid \|\varphi_0\|=\|\varphi\|\,\,\text{and}\,\, \varphi_0\leq\psi\,\,\text{for any}\,\,\psi\in\mathcal{S}_{\varphi}\} = \{\varphi\}$, 
we obtain $\Phi(\varphi)=\varphi$.
\end{proof}

Note that $(b)$ answers the question in \cite[Remark 3.11]{LNW} positively in the case $p=1$. 
See also \cite[Theorem (4.5)]{Ka65}, in which Kadison proved $(a)$ with an additional assumption of affinity, 
and \cite[Theorem 4]{MT}, in which the case $M=N=\B(\H)$ for $(a)$ is solved.

\section{Problems}\label{problem}
The results in this paper may be extended to Tingley's problem between various types of Banach spaces concerning operator algebras. 
In this section, we give some problems which seem to have new perspectives for the study of Tingley's problem in the setting of operator algebras.\smallskip

In Section \ref{C*}, we showed that Tingley's problem between unital C$^*$-algebras has a positive answer if and only if the following problem has a positive answer.
\begin{problem}\label{problem-C*}
Let $A$ be a unital C$^*$-algebra and $\Phi : S(A) \to S(A)$ be a surjective isometry. 
Suppose that $\Phi(x) = x$ for every $x \in S(A) \cap A^{-1}$.
Is $\Phi$ equal to the identity mapping on $S(A)$?
\end{problem}

In Section \ref{sa}, we used the projection lattice of a von Neumann algebra. 
Such a discussion is generally impossible in the cases of general unital C$^*$-algebras. 
So we propose the following problem.
\begin{problem}[Tingley's problem for the spaces of self-adjoint operators in unital C$^*$-algebras]
Let $A, B$ be unital C$^*$-algebras and $T : S(A_{sa}) \to S(B_{sa})$ be a surjective isometry. 
Does $T$ admit an extension to a linear surjective isometry $\widetilde{T} : A_{sa} \to B_{sa}$?
\end{problem}

Since $T(1)$ is isolated in $\E(B_{sa})$, we obtain that $T(1)$ is a central self-adjoint unitary in $B$.
By the theorem due to Kadison in the paper in 1952 \cite[Theorem 2]{Ka52},
 if such a $\widetilde{T}$ exists, then the mapping $T(1)^{-1} \widetilde{T}(\cdot)$ is the restriction of a Jordan $^*$-isomorphism from $A$ onto $B$.\smallskip

As another direction, we present the problem below.
\begin{problem}[Tingley's problem for noncommutative $L^p$-spaces]\label{problem-Lp}
Let $1 < p < \infty$, $p \neq 2$, $M, N$ be von Neumann algebras and $T : S(L^p(M)) \to S(L^p(N))$ be a surjective isometry between the unit spheres of (Haagerup) noncommutative $L^p$-spaces (with respect to fixed normal semifinite faithful weights). 
Does $T$ admit an extension to a real linear surjective isometry $\widetilde{T} : L^p(M) \to L^p(N)$?
\end{problem}
See \cite{PX} for information about noncommutative $L^p$-spaces. 
We mention that the noncommutative $L^p$-space $L^p(M)$ can be considered as the complex interpolation space between $L^1(M) = M_*$ and $L^{\infty}(M) = M$. 

Noncommutative $L^p$-spaces are strictly convex, so it is completely impossible to apply the facial method which is wholly used in this paper.
However, as the first step to challenge this problem, one may make use of the following property.
\begin{proposition}[Equality condition of the noncommutative Clarkson inequality, see {\cite[Theorem 2.3]{S}} for references]
Let $1 \leq p < \infty$, $p \neq 2$, $M, N$ be von Neumann algebras and $\xi, \eta \in L^p(M)$. Then 
\[
\|\xi + \eta\|^p + \|\xi - \eta\|^p = 2(\|\xi\|^p + \|\eta\|^p) 
\Longleftrightarrow \xi\eta^* = 0 = \xi^*\eta.
\]
\end{proposition}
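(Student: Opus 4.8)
The inequality $\|\xi+\eta\|^{p}+\|\xi-\eta\|^{p}\le 2(\|\xi\|^{p}+\|\eta\|^{p})$ for $1\le p<2$, with the reverse inequality for $p>2$, is the noncommutative Clarkson inequality recorded in the cited sources, so the content is the equality case, the nontrivial direction being $\Rightarrow$. The direction $\Leftarrow$ is a short computation: if $\xi^{*}\eta=0$ then also $\eta^{*}\xi=(\xi^{*}\eta)^{*}=0$, so $(\xi\pm\eta)^{*}(\xi\pm\eta)=|\xi|^{2}+|\eta|^{2}$; and if $\xi\eta^{*}=0$ then the ranges of $\xi^{*}$ and $\eta^{*}$ are orthogonal, i.e.\ the support projections of $|\xi|^{2}$ and $|\eta|^{2}$ are orthogonal, so $(|\xi|^{2}+|\eta|^{2})^{p/2}=|\xi|^{p}+|\eta|^{p}$. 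Taking the trace gives $\|\xi\pm\eta\|^{p}=\|\xi\|^{p}+\|\eta\|^{p}$, and summing the two equalities yields the claimed identity.

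For $\Rightarrow$ I would first fold the two desired conditions into one by passing to $M_{2}(M)$ with its canonical trace $\tau$. Put $\Xi_{1}=\bigl(\begin{smallmatrix}0&\xi\\ \xi^{*}&0\end{smallmatrix}\bigr)$, which is self-adjoint, and $\Xi_{2}=\bigl(\begin{smallmatrix}0&\eta\\ -\eta^{*}&0\end{smallmatrix}\bigr)$, which is skew-adjoint. Computing the moduli involved, $\tau(|\Xi_{1}+\Xi_{2}|^{p})=\|\xi+\eta\|^{p}+\|\xi-\eta\|^{p}$, $\tau(|\Xi_{1}|^{p})=2\|\xi\|^{p}$, $\tau(|\Xi_{2}|^{p})=2\|\eta\|^{p}$, and $\Xi_{1}\Xi_{2}=\bigl(\begin{smallmatrix}-\xi\eta^{*}&0\\ 0&\xi^{*}\eta\end{smallmatrix}\bigr)$. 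Thus the hypothesis reads $\tau(|\Xi_{1}+\Xi_{2}|^{p})=\tau(|\Xi_{1}|^{p})+\tau(|\Xi_{2}|^{p})$, while the conclusion $\xi\eta^{*}=0=\xi^{*}\eta$ is exactly $\Xi_{1}\Xi_{2}=0$, equivalently that the support projections of $\Xi_{1}$ and $\Xi_{2}$ are orthogonal. Hence it suffices to prove: for a self-adjoint $a$ and a skew-adjoint $b$ in $L^{p}(N)$ of a von Neumann algebra $N$, the equality $\tau(|a+b|^{p})=\tau(|a|^{p})+\tau(|b|^{p})$ forces $ab=0$.

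For $p=1$ this can be done by duality in $L^{1}$. The hypothesis is $\ast$-symmetric (unchanged under $(a,b)\mapsto(a^{*},b^{*})=(a,-b)$, since $\|c^{*}\|_{1}=\|c\|_{1}$), so it actually gives $\|a+b\|_{1}=\|a\|_{1}+\|b\|_{1}=\|a-b\|_{1}$. Using $\|c\|_{1}=\sup\{\operatorname{Re}\tau(wc):\|w\|\le1\}$ and analysing when equality holds, $\|a+b\|_{1}=\|a\|_{1}+\|b\|_{1}$ produces a contraction $w$ that restricts to the sign of $a$ on the support of $a$ and to the phase of $b$ on the support of $b$, and $\|a-b\|_{1}=\|a\|_{1}+\|b\|_{1}$ produces a contraction $w'$ that restricts to the sign of $a$ on the support of $a$ but to the phase of $-b$ on the support of $b$. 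Since the phase of a skew-adjoint element squares to $-1$ on its support, evaluating $w-w'$ on vectors $\xi_{a}+\xi_{b}$ (with $\xi_{a},\xi_{b}$ in the supports of $a,b$ respectively) and using $\|w-w'\|\le2$ forces those two supports to be orthogonal; hence $ab=0$.

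For $1<p<\infty$, $p\ne2$, the duality above is less transparent because $\|\cdot\|^{p}$ is differentiable rather than a supremum, and this is the main obstacle: in contrast to the commutative case one cannot argue pointwise, since $|\xi\pm\eta|$ is not dominated by $|\xi|,|\eta|$. The plan here is, for $0<p<2$, to use the integral representation $t^{p/2}=c_{p}\int_{0}^{\infty}s^{p/2-1}\bigl(1-\tfrac{s}{t+s}\bigr)\,ds$ to express $\|\xi\pm\eta\|^{p}$ as an $s$-integral of $\tau\bigl(h_{\pm}(h_{\pm}+s)^{-1}\bigr)$ with $h_{\pm}:=(\xi\pm\eta)^{*}(\xi\pm\eta)$; since $h_{+}+h_{-}=2(|\xi|^{2}+|\eta|^{2})$, equality in the Clarkson inequality forces equality, for almost every $s$, in a resolvent estimate governed by the operator convexity of $x\mapsto(x+s)^{-1}$, from which the orthogonality of supports can be extracted. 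The range $p>2$ then follows by the duality $L^{p}(M)^{*}=L^{p'}(M)$. I expect this last extraction --- passing from equality of traces of fractional powers of noncommuting positive operators to orthogonality of supports --- to be the delicate point, which is why the statement is quoted from the literature rather than proved here in full.
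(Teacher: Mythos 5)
The paper does not actually prove this proposition: it is stated as a quoted result, with \cite[Theorem 2.3]{S} cited for references (the equality case of the noncommutative Clarkson inequality is due to Yeadon in the semifinite-trace setting and to Kosaki and Raynaud--Xu for Haagerup $L^p$-spaces), and it is used only as a heuristic remark in Section \ref{problem}. So there is no in-paper argument to compare yours with. Within your proposal, the $\Leftarrow$ direction is correct and complete: $\xi^*\eta=0$ gives $(\xi\pm\eta)^*(\xi\pm\eta)=|\xi|^2+|\eta|^2$, while $\xi\eta^*=0$ gives orthogonality of the right supports and hence $(|\xi|^2+|\eta|^2)^{p/2}=|\xi|^p+|\eta|^p$; taking the trace finishes. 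The $2\times 2$ reduction (self-adjoint $\Xi_1$, skew-adjoint $\Xi_2$, $\Xi_1\Xi_2=\operatorname{diag}(-\xi\eta^*,\xi^*\eta)$) is also correct, and your $p=1$ duality argument can indeed be completed as sketched: the two norming contractions $w,w'$ agree with the sign of $a$ on $\operatorname{supp}a$ and restrict to opposite phases of $b$ on $\operatorname{supp}b$, so $\|w-w'\|\le 2$ forces the two supports to be orthogonal.

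The genuine gap is the forward implication for $1<p<\infty$, $p\neq 2$, which is the main content of the proposition and which you explicitly leave unproved. Your integral-representation plan is plausible but, as written, the equality analysis it suggests --- strictness in a Jensen-type trace inequality for $t\mapsto t/(t+s)$ applied to $h_\pm=|\xi\pm\eta|^2$ with $h_++h_-=2(|\xi|^2+|\eta|^2)$ --- most naturally yields $h_+=h_-$, i.e.\ $\xi^*\eta+\eta^*\xi=0$, together with equality in the McCarthy-type inequality $\operatorname{tr}((|\xi|^2+|\eta|^2)^{p/2})\le \operatorname{tr}(|\xi|^p)+\operatorname{tr}(|\eta|^p)$. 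The latter does give $\xi\eta^*=0$ via its own equality case (which itself needs proof), but the remaining step from ``$\xi\eta^*=0$ and $\xi^*\eta=-\eta^*\xi$'' to $\xi^*\eta=0$ is not addressed, and neither is the passage to the nontracial Haagerup setting, where $\operatorname{tr}(1)=\infty$ makes expressions like $\operatorname{tr}(h(h+s)^{-1})$ delicate. Since you ultimately defer to the literature for exactly this part, your proposal amounts to a correct proof of the easy direction and of the case $p=1$, plus a citation for the rest --- which is, in effect, what the paper itself does for the entire statement.
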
 
Therefore, in the setting of Problem \ref{problem-Lp}, for $\xi, \eta \in S(L^p(M))$, $\xi\eta^* = 0 = \xi^*\eta$ if and only if $T(\xi)T(\eta)^* = 0 = T(\xi)^*T(\eta)$.
See \cite{S} for a result about complex linear surjective isometry between noncommutative $L^p$-spaces.\bigskip

\textbf{Note added in proof} \quad After the submission of this paper, the author and Ozawa announced several results on Tingley’s problem in \cite{MO} which contain an affirmative solution of Problem \ref{problem-C*}.\bigskip

\textbf{Acknowledgements} \quad This paper is written for master's thesis of the author. 
The author appreciates Yasuyuki Kawahigashi who is the advisor of the author. 
This work was supported by Leading Graduate Course for Frontiers of Mathematical Sciences and Physics, MEXT, Japan.

\end{document}